\newcommand{\ds}{\displaystyle}
\newcommand{\bc}{\begin{center}}
\newcommand{\ec}{\end{center}}
\newcommand{\1}{\mathbf{1}}
\newcommand{\E}{\mathbb{E}}
\renewcommand{\P}{\mathbb{P}}
\newcommand{\R}{\mathbb{R}}
\newcommand{\N}{\mathbb{N}}
\newcommand{\cB}{\mathcal{B}}
\newcommand{\cD}{\mathcal{D}}
\newcommand{\cE}{\mathcal{E}}
\newcommand{\cL}{\mathcal{L}}
\newcommand{\cP}{\mathcal{P}}
\newcommand{\pare}[1]{\left ( #1 \right )}
\newcommand{\croc}[1]{\left [ #1 \right ]}
\newcommand{\espx}[1]{\mathbb{E}^{x_0}#1}
\newcommand\indi[1]{{\mathbb{I}}_{\mathnormal{#1}}}
\newtheorem{theorem}{Theorem}[section]
\newtheorem{lemma}[theorem]{Lemma}
\newtheorem{proposition}[theorem]{Proposition}
\newtheorem{corollary}[theorem]{Corollary}
\newtheorem{remark}[theorem]{Remark}
\newtheorem{assumption}[theorem]{Assumption}
\begin{document}

\bibliographystyle{plain}

\title{
A transformed stochastic Euler scheme for multidimensional transmission PDE
\footnote{AMS Classifications. Primary:60H10, 65U05;
Secondary: 65C05, 60J30, 60E07, 65R20.
\newline Keywords: Stochastic Differential Equations; Divergence Form
Operators; Euler discretization scheme; Monte Carlo methods.}}

\date{\ }

\author{Pierre \'ETOR\'E \\
 LJK - Bâtiment IMAG\\
 UMR 5224\\
700, avenue centrale\\
38401 St Martin d'Hères, France\\
\\
\and Miguel MARTINEZ\footnote{Research supported by the LABEX Bézout}\\
Laboratoire d'Analyse et de Math\'ematiques Appliqu\'ees \\
UMR 8050\\
Universit\'e Paris-Est -- Marne-la-Vall\'ee \\
5 Boulevard Descartes \\
Cit\'e Descartes -- Champs-sur-Marne \\
77454 Marne-la-Vall\'ee cedex 2,
France
}

\maketitle

\begin{abstract}
\medskip
In this paper we consider multi-dimensional Partial Differential Equations (PDE) of parabolic type
in divergence form. The coefficient matrix of the divergence operator is assumed to be discontinuous along some smooth interface. At this interface, the solution of the PDE presents a compatibility transmission condition of its co-normal derivatives (multi-dimensional diffraction problem). We prove an existence and uniqueness result for the solution and study its properties.  In particular, we provide new estimates for the partial derivatives of the solution in the classical sense. We then construct a low complexity numerical Monte Carlo stochastic Euler scheme to approximate the solution of the PDE of interest.
 Using the afore mentioned estimates, we prove a convergence rate for our stochastic numerical method when the initial condition belongs to some iterated domain of the divergence form operator. 
Finally, we compare our results to classical deterministic numerical approximations and illustrate the accuracy of our method.

\end{abstract}

\section{Introduction}

Given a finite time horizon $T$,  a real valued function $x\mapsto u_0(x)$, and an elliptic symmetric matrix $x\mapsto a(x)\in \R^{d\times d}$, which is smooth except at the interface surface $\Gamma$ between subdomains $D_\pm$ of $\R^d$ ($\Gamma=\bar{D}_+\cap\bar{D}_-$), we consider the parabolic transmission (or diffraction) problem~:~find $u$ from $[0,T]\times \R^d$ to $\R$ satisfying
\begin{equation}
\label{edp-intro}
\left\{
\begin{array}{l}
\partial_tu(t,x)-\nabla.\pare{a(x)\nabla_x u(t,x)} = 0,~\;\forall (t,x)\in (0,T]\times (\R^d \setminus \Gamma)\\
\\
u(0,x)=u_0(x),\;\forall x\in \R^d\\
\\
\langle a_+\nabla_x u_{+}(t,.)- a_-\nabla_x u_{-}(t,.), \nu\rangle=0 \text{ and } u_+(t,.)=u_-(t,.)\;\text{ along }\Gamma.\\
\end{array}
\right.
\end{equation}

The objective of this paper is to provide an efficient stochastic numerical resolution method for the solution of \eqref{edp-intro}.

Parabolic equations involving $\nabla\cdot\pare{a\nabla}$ have been a major preoccupation for mathematicians in the fifties and the sixties. We may cite the pioneering works of  \cite{Nash-1957, Nash-1958},  \cite{De-Giorgi-1957}, and  \cite{Moser-1963, Moser-1964, Moser-1967} that prove the continuity of the solution of the Cauchy problem attached to $\nabla\cdot\pare{a\nabla}$ and also the celebrated paper by \cite{Aronson-1967}, which gives upper and lower Gaussian estimate bounds for the fundamental solution of the operator $\nabla\cdot\pare{a\nabla}$ (for a more modern perspective on evolution PDEs involving divergence form operators of type $\nabla\cdot\pare{a\nabla}$ see also \cite{Lions-Magenes-1972}). In these references assumptions on $a$ are very weak (it is assumed to be measurable, bounded and elliptic).

In the case where the matrix $a$ is assumed to be discontinuous along the regular boundaries of some nice disjoint connected open sets in $\R^d$, but smooth elsewhere, a refined analysis of the parabolic equation may be found in the monograph \cite{Lady-et-al-1967}. The authors interpret the parabolic equation  as a diffraction problem with transmission conditions along the discontinuity boundaries, of the type of \eqref{edp-intro}, and investigate the
classical smoothness of its solution.

\vspace{0.3cm}

When the underlying space is one-dimensional and the discontinuity is at zero ($\Gamma$ then reduces to the single point $0$), the link between 
\eqref{edp-intro} and some asymmetric diffusion process $X$ is well known. More precisely one has that
$
u(t,x)=\E^x[u_0(X_t)]$ 
where $X$ is solution to the Stochastic Differential Equation (SDE) with local time
\begin{equation}
\label{eq:SDE-local-time}
dX_t = \sigma(X_t)dW_t + a'(X_t)dt + \frac{a(0+) - a(0-)}{a(0+)+a(0-)}dL_t^0(X)
\end{equation}
where $\sigma^2 = 2a$ and $a'$ denotes a function that coincides with the first order derivative of $a$  outside zero, and can be set at any arbitrary value at zero.
In \eqref{eq:SDE-local-time} we have denoted $W$  a standard one-dimensional Brownian motion (B.m.), and $L_t^0(X)$ the symmetric local time of $X$ at time $t$. 
Under mild conditions \eqref{eq:SDE-local-time} has a unique strong solution $X$, see \cite{LeGall-1984}.
Put in other words the operator $\nabla\cdot\pare{a\nabla}$ appears as the infinitesimal generator of the diffusion $X$ solution of \eqref{eq:SDE-local-time}. Note that the local time term in 
\eqref{eq:SDE-local-time} is a singular term that reflects the discontinuity of $a$ along $\Gamma=\{0\}$.

 For a study of the one-dimensional case one may refer to the overview \cite{Lejay-2006}, \cite{Etore-Martinez-2017}, and the series of works \cite{Martinez-2004, Martinez-Talay-2006, Martinez-Talay-2012,Lejay-Martinez-2006, Etore-2006, Etore-Lejay-2007, Etore-Martinez-2013,Etore-Martinez-2014} \cite{lejay-maire2013,mazzonetto2,frikha2018,lenotre-2019} where stochastic numerical schemes are presented. 
 
 Then if one constructs a scheme $\overline{X}$ approaching $X$ (in law for example), we will have that
 $\E^x[u_0(\overline{X}_t)]$ approaches $u(t,x)$. This provides some stochastic numerical resolution method for the solution of \eqref{edp-intro}. But none of the above cited works on the one-dimensional case can be directly adapted to the multidimensional case.
 


\vspace{0.3cm}

As a matter of fact, till now and up to our knowledge much fewer stochastic schemes have been proposed and studied to tackle the multidimensional case.

A natural idea would be to regularize the coefficient $a$ around the interface $\Gamma$, and then to perform a discretized stochastic scheme on the classical problem obtained by regularization (for smooth $a^\varepsilon$ the process $X$ in link with $\nabla\cdot\pare{a^\varepsilon\nabla}$ is some Itô process with classical drift that can be approached by a standard Euler scheme). But then there is a balance to find between the regularization step and the discretization step. Such methods are less precise and less investigated (see \cite{stroock1997} for some elements in this direction; see also some of our numerical results in Section  \ref{sec:num}).

For some results with no regularization procedure
 see   \cite{Lenotre-2015}, and  \cite{Bossy-al-2010} in the case of a diagonal coefficient matrix $a$ constant outside the discontinuity boundary $\Gamma$; see also \cite{Limic-2011}, which attempts to interpret stochastically the deterministic Galerkin method using jump Markov Chains. 

In this paper we will propose a stochastic numerical scheme that allows to treat the multidimensional  case, when the matrix-valued diffusion coefficient $a$ is not necessarily diagonal, nor piecewise constant. We aim at treating the discontinuity of $a$ directly and use no regularization. The scheme we propose is of Euler type; it can be seen as en extension to the multidimensional case of the scheme studied in \cite{Martinez-Talay-2012} (see some comments in
Remark \ref{rem:euler-1D}).

\vspace{0.3cm}
One of the difficulties of the multidimensional case is that the stochastic process $X$ naturally in link with the operator $\nabla\cdot\pare{a\nabla}$ is more difficult to describe than in dimension one. One knows that the operator generates such a process $X$, which is Markov (see for instance \cite{Stroock-1988}; on the Dirichlet form approach see \cite{Fukushima-et-al-2011}, in particular Exercise 3.1.1 p111). 
We still have the link $u(t,x)=\E^x[u_0(X_t)]$.
But the Itô dynamic of $X$ is difficult to establish and to exploit. In the companion paper \cite{etore-martinez6} we have been able to prove (in the case $\R^d=\bar{D}_+\cup D_-$ and $a$ has some smoothness in $D_\pm$) that
\begin{align}
\label{EDS:multi}
X_t^k &= x_k + \int_0^t \sum_{j=1}^d \sigma_{kj}(X_s)dW_s^{j} + \int_{0}^t \sum_{j=1}^d \partial_j a_{kj}(X_s)\indi{X_s\in D}ds\nonumber\\
&\hspace{0.5 cm} - \frac{1}{2}\int_0^t \gamma_{+,k}(X_s) dK_s + \frac{1}{2}\int_0^t \gamma_{-,k}(X_s)dK_s,\hspace{0,3 cm}t\geq 0.
\end{align}
In this expression $W$ is a standard B.m., we have $\sigma\sigma^*=2a$, the terms $\gamma_{\pm,k}$ are some co-normal vectors to the surface $\Gamma=\bar{D}_+\cap\bar{D}_-$, and
$K$ is the PCAF associated through the Revuz correspondence to the surface measure on $\Gamma$. Equation \eqref{EDS:multi} is in some sense the multi-dimensional analog to
\eqref{eq:SDE-local-time}, the singular term being now $- \frac{1}{2}\int_0^t \gamma_{+,k}(X_s) dK_s + \frac{1}{2}\int_0^t \gamma_{-,k}(X_s)dK_s$. 

However to infer from \eqref{EDS:multi} an approximation scheme $\overline{X}$ for $X$ is not easy. In the afore mentioned works about the one-dimensional case, things are most often achieved 
with the help of Itô-Tanaka type formulas, that allow to manipulate SDEs with local time. In the multi-dimensional case we have not access to such a formula, and in addition we know less about the singular term
$- \frac{1}{2}\int_0^t \gamma_{+,k}(X_s) dK_s + \frac{1}{2}\int_0^t \gamma_{-,k}(X_s)dK_s$ than we know about $L^0_t(X)$ in the one-dimensional case.

\vspace{0.3cm}
Thus we are led, in the present paper, to contruct a stochastic scheme $\overline{X}$ such that $\E^x[u_0(\overline{X}_t)]$ approaches $u(t,x)$, but {\it without seeking to approach
$X$ by~$\overline{X}$.}  The idea will be to perform a standard Euler scheme as long as $\overline{X}$ does not cross the boundary $\Gamma$. But when the scheme $\overline{X}$ crosses the boundary we will correct its position in a way that reflects the transmission condition in \eqref{edp-intro}. The contribution of the paper are the following.

\vspace{0.1cm}
I) We will first study the PDE \eqref{edp-intro}. We will show that, when the initial condition $u_0$ belongs to some iteration of the domain of $\nabla\cdot\pare{a\nabla}$, this PDE has a classical solution. Then we prove the existence of global bounds for the partial derivatives of this solution (up to order four in the space variable) outside the discontinuity boundary $\Gamma$, for all strictly positive times (and not just for times $t$ satisfying $t\geq \varepsilon$ for some $\varepsilon>0$), and all the way up to the boundary (not only interior estimates). In our opinion these estimates are new (compared to \cite{Lady-et-al-1967}) and have an interest per se. These estimates will be needed to perform the convergence analysis of our scheme. The method we follow, in this PDE oriented part of the paper, is combining the Hille-Yosida theorem with
results on elliptic transmission PDEs to be found in \cite{McLean-2000}. 

\vspace{0.1cm}
II) We propose our scheme (we insist again that our matrix valued coefficient $a$ does not need to be diagonal) and study its convergence rate. More precisely we prove that
$$
\Big| u(T,x_0)-\E^{x_0}u_0(\overline{X}^n_T)  \Big|\leq K \sqrt{h_n},
$$
where $h_n$ is the time step of our Euler scheme (see the precise assumptions and statement in Theorem \ref{thm-conv-schema}). This has to be compared with the results in the classical smooth case and the ones in \cite{Martinez-Talay-2012}.

\vspace{0.3cm}
The paper is organised as follows. In Section \ref{sec:notations}  we present the notations of the paper and our main assumptions. In Section \ref{sec:EDP} we define precisely and study the parabolic transmission problem \eqref{edp-intro}, proving in particular an existence and uniqueness result for a classical solution, for which we get estimates for the space and time derivatives.
In Section \ref{sec:euler} we present our scheme, and in Section \ref{sec:conv-euler} we analyse its convergence. Section  \ref{sec:num} is devoted to numerical experiments.

\section{General notations and assumptions}
\label{sec:notations}

For two points $x,y\in\R^d$ we denote by $\langle x,y\rangle$ their scalar product $\langle x,y\rangle=x^* y= \sum_{i=1}^dx_iy_i$.

For a point $x\in\R^d$ we denote by $|x|$ its Euclidean norm i.e. $|x|^2=\sum_{i=1}^dx_i^2=\langle x,x\rangle$.

We denote by $(e_1,\ldots,e_d)$ the usual orthonormal basis of $\R^d$.

For two metric spaces $E,F$ we will denote by $C(E;F)$ the set of continuous  functions from $E$ to $F$ and, for
$1\leq p\leq \infty$, by  $C^p(E;F)$ the set of functions in $C(E;F)$ that are $p$ times differentiable with continuous derivatives.

We will denote by $C^p_c(E;F)$ the set of functions in $C^p(E;F)$ that have a compact support.

We will denote by $C^p_b(E;F)$ the set of functions in $C^p(E;F)$ that are continuous with bounded $p$ first derivatives 
($C_b(E;F)$ denotes the set of functions in $C(E;F)$ that are bounded).

If $F=\R$, we will sometimes simply write for instance $C(E)$ for $C(E;\R)$, for the sake of conciseness.

For any multi-index $\alpha = (i_1,\dots,i_d)\in \N^d$ and $x=(x_1,\dots,x_d)\in \R^d$, we note $x^\alpha$ the product $x_1^{i_1}\dots x_d^{i_d}$ and $|\alpha| = i_1 + \dots + i_d$.  So that for $u\in C^{|\alpha|}(\R^d)$ we will denote
$\frac{\partial^{|\alpha|}u}{\partial x^\alpha}$, or in short $\partial^\alpha u$, the partial derivative 
$\partial^{i_1}_{x_1^{i_1}}\ldots\partial^{i_d}_{x_d^{i_d}}u$.

\vspace{0.2cm}

Let $U\subset \R^d$ an open subset. We will denote by $L^2(U)$ the set of square integrable functions from $U$ to $\R$ equipped with the usual norm and scalar product 
$||\cdot||_{L^2(U)}$ and $\langle \cdot,\cdot\rangle_{L^2(U)}$.

We denote $H^1(U)$ the usual Sobolev space $W^{1,2}(U)$, equipped with the usual norm $||\cdot||_{H^1(U)}$.
We will denote by $D_iv$ the derivative in the distribution sense with respect to $x_i$ of $v\in L^2(U)$. 

 We recall that the space $H^1_0(U)\subset H^1(U)$ can be defined as
$H^1_0(U)=\overline{C^\infty_c(U;\R)}=\overline{C^1_c(U;\R)}$. 

We denote $H^{-1}(U)$ the usual dual topological space of $H^1_0(U)$.

For $m\geq 2$, we denote $H^m(U)$  the usual Sobolev space $W^{m,2}(U)\subset L^2(U)$ of functions having $m$ successive weak derivatives in $L^2(U)$.

\vspace{0.2cm}

The notion of a $C^k$ domain $U\subset\R^d$  with bounded boundary $\Gamma=\partial U$  is defined with the help of a system of local  change of coordinates  of class $C^k$ (see \cite{McLean-2000} Chap.3 pp89-90).

From now on we consider in the whole paper that $\R^d=\bar{D}_+\cup D_-$ with $D_+$ and $D_-$ two open connected subdomains separated by a transmission boundary~$\Gamma$ that is to say$$\Gamma=\bar{D}_+\cap \bar{D}_-$$ 
(in addition we will denote 
$D=D_+\cup D_-=\R^d\setminus\Gamma\subset\R^d$).

By an assumption of type "$\Gamma$ is bounded and  $C^k$" we will mean that both $D_+$ and $D_-$ are  $C^k$ domains, and that $\Gamma$ is bounded. Note  that in that case we shall consider $D_+$  (resp. $D_-$) as the interior (resp. exterior) domain. Note that $D_-$ is then unbounded (although its boundary is bounded).

\vspace{0.2cm}
Assume $\Gamma$ is bounded and $C^2$. We will denote $\gamma: H^1(D_\pm)\to H^{1/2}(\Gamma)$ the usual trace operator on $\Gamma$ and $H^{-1/2}(\Gamma)$ the dual space of
$H^{1/2}(\Gamma)$ (see p98-102 in \cite{McLean-2000}).

\vspace{0.4cm}
In the sequel we will frequently note $f_\pm$ the restrictions of a function $f$ to $D_\pm$. Besides, by an assumption of type 
"the function $f$ satisfies $f_\pm\in C^p(\bar{D}_\pm)$" (or "$f\in C^p(\bar{D}_+)\cap C^p(\bar{D}_-)$") we will mean that the restriction of $f$ to $D_+$
(resp. $D_-$) coincides on $D_+$ (resp. $D_-$) with a function $\tilde{f}_+$ of class $C^p(\R^d)$ (resp. $\tilde{f}_-$).
So that for any $x\in\Gamma$ we can give a sense for example to $f_+(x)$: it is
$\lim_{z\to x\,,\,z\in D_+} f(z)=\tilde{f}_+(x)$. 

In the same time spirit we may note for $f\in C(\bar{D}_+)\cap C(\bar{D}_-)$ and a point $y\in\Gamma$
\begin{equation*}
f(y\pm)=\lim_{z\to y\,,\,z\in D_\pm}f(z)=f_\pm(y).
\end{equation*}


For $u\in C^1(\bar{D}_+;\R)\cap C^1(\bar{D}_-;\R)$ we denote $\nabla_xu=(\frac{\partial u}{\partial x_1},\ldots,\frac{\partial u}{\partial x_d})^\ast$ and,
for a point $y\in\Gamma$
\begin{equation}
\label{def-nabla-pm}
\nabla_xu_\pm(y)=\lim_{z\to y\,,\,z\in D_\pm}\nabla_xu(z).
\end{equation}
For a vector field $G\in  C^1(D;\R^d)$ we denote by $\nabla\cdot G\,(x)$ its divergence at point $x\in D$, i.e. 
$\nabla\cdot G(x)=\sum_{i=1}^d\frac{\partial G_i}{\partial x_i}(x)$.

For $u\in C^2(D;\R)$ and $x\in D$ we denote $\mathbf{H}[u](x)$ the Hessian matrix of $u$ at point $x$.

\vspace{0.5cm}
Let $a(x) = (a_{ij}(x))_{i,j\in \{1,\dots, d\}}$ be a symmetric matrix valued and time homogeneous diffusion coefficient.

\vspace{0.1cm}
If $a_{ij}\in C^1(D;\R)$ for all $1\leq i,j\leq d$ and $u\in  C^2(D;\R)$ we denote
\begin{equation}
\label{eq-def-calL}
{\cal L}u(x) = \nabla\cdot\pare{a(x)\nabla_x u(x)}, \quad\forall x\in D.
\end{equation}

 In the whole paper the coefficients of the function matrix $a$ are always assumed to be measurable and bounded by a constant $\Lambda$.

\vspace{0.2cm}

We will also often make the following ellipticity assumption
\begin{assumption} \;{\rm ({\bf E})~:}
There exists $\lambda\in (0,\infty)$ such that 
\begin{equation}
\label{eq:ell}
\forall x\in \R^d,\;\;\forall \xi\in\R^d,\quad \lambda|\xi|^2\leq \xi^\ast a(x)\xi.
\end{equation}
\end{assumption}

%

\vspace{0.1cm}
Note that under $(\mathbf{E})$ we can assert that for any $x\in D$ we have
\begin{equation}
\label{eq:decomp-ortho}
a_\pm(x)=P_\pm^*(x)E_\pm(x)P_\pm(x)
\end{equation}
with $P_\pm(x)$ some orthogonal matrices and $E_\pm(x)$ some diagonal matrices with strictly positive eigenvalues.

\vspace{0.4cm}
Assume $\Gamma$ is $C^2$. For a point $x\in\Gamma$ we denote by $\nu(x)\in\R^d$ the unit normal to $\Gamma$ at point $x$, pointing to $D_+$. 
Assume the $a_{ij}$'s satisfy $(a_\pm)_{ij}\in C(\bar{D}_\pm)$.
We define then the co-normal vector fields ${\gamma_+}(x) := a_+(x)\nu(x)$ and $\gamma_-(x) := -a_-(x)\nu(x)$,
for $x\in\Gamma$.


\vspace{0.2cm}

Note that under $(\mathbf{E})$ it is clear that we have
\begin{equation}
\label{eq:controle-conom}
\forall x\in  \Gamma,\hspace{0.3 cm} \langle {\gamma_+}(x), \nu(x)\rangle \geq \lambda >0
\hspace{0.3 cm} \text{and}\hspace{0.3cm}\langle \gamma_-(x), \nu(x)\rangle \leq  - \lambda <0.
\end{equation}

Note that the notation $\gamma$ for the trace operator follows the usual one (\cite{McLean-2000} for instance) and the notation~$\gamma_\pm$ for the co-normal vectors follows the one of the paper \cite{Bossy-et-al-2004}. But it will be dealt with the trace operator only in Section \ref{sec:EDP}, and with co-normal vectors only in Sections \ref{sec:euler} and \ref{sec:conv-euler}. So that these notations will cause no confusion.

\vspace{0.3cm}

To finish with we define the unbounded operator $A:\cD(A)\subset L^2(\R^d)\to L^2(\R^d)$ by
$$
\cD(A)=\big\{  u\in H^1(\R^d)\text{ with }\sum_{i,j=1}^dD_i(a_{ij}D_ju)\in L^2(\R^d)  \big\}
$$
and
$$
\forall u\in\cD(A), \quad Au=\sum_{i,j=1}^dD_i(a_{ij}D_ju).$$
Then we introduce the iterated domains defined recursively by
 $$
 \cD(A^k)=\{ v\in\cD(A^{k-1}):\;\;Av\in \cD(A^{k-1})\},\quad k\geq 2.$$

\section{The parabolic transmission problem}
\label{sec:EDP}

Let $0<T<\infty$ a finite time horizon. Let us consider the transmission parabolic problem

$$
(\mathcal{P}_\mathrm{T})
\left\{
\begin{array}{rcll}
\partial_tu(t,x)-\mathcal{L}u(t,x)&=&0&\forall (t,x)\in (0,T]\times D \\
\\
\langle a_+\nabla_x u_{+}(t,y)- a_-\nabla_x u_{-}(t,y), \nu(y)\rangle &=& 0&\forall (t,y)\in(0,T]\times\Gamma\quad (\star) \\
\\
u(t,y+)&=&u(t,y-)&\forall (t,y)\in[0,T]\times\Gamma\\
\\
u(0,x)&=&u_0(x)&\forall x\in \R^d.\\
\end{array}
\right.
$$

\vspace{0.5cm}
\noindent
We will say that $(t,x)\mapsto u(t,x)$ is {\it classical solution} to $(\cP_{\mathrm{T}})$ if it satisfies
  \begin{equation}
  \label{eq:esp-sol-classique}
  u\!\in\!C\big([0,T]; C^2(\bar{D}_+)\cap C^2(\bar{D}_-)\big ) \cap C^1\big([0,T]; C(\bar{D}_+)\cap C(\bar{D}_-)\big ) \cap C\big([0,T]; C(\R^d)\big )
  \end{equation}
   and satisfies the following requisites. First, $u$ satisfies the first line of $(\mathcal{P}_{\mathrm{T}})$, where the derivatives are understood in the classical sense. Second,
 for all $0<t\leq T$  the limits
$\lim_{z\to y\,,\,z\in D_\pm}\nabla_xu(t,z)$  satisfy the transmission condition $(\star)$ for all $y\in\Gamma$.
Note that these limits exist thanks to \eqref{eq:esp-sol-classique}.
Third, $u$ is continuous accross $\Gamma$ (third line).  Fourth, it satisfies the initial condition at the fourth line of $(\cP_{\mathrm{T}})$.

\noindent
The aim of this section is to prove the following result.

\begin{theorem}
\label{thm-existe-sol-classique}
Let $a=(a_{ij})_{1\leq i,j\leq d}$ satisfy $(\mathbf{E})$. 
\begin{itemize}
\item Denote 
\begin{equation}
k_0=\left \{\begin{array}{l}
\lfloor{\frac{d}{4}}\rfloor + 2\;\;\text{ if }d\text{ is even;}\\
\lfloor{\frac{3}{2} + \frac{\lfloor{d/2}\rfloor}{2}}\rfloor + 2\;\;\text{ if }d\text{ is odd.}
\end{array}\right.
\end{equation}

Assume that the coefficients $a_{ij}$ satisfy $(a_\pm)_{ij}\in C_b^{2k_0-3}(\bar{D}_\pm)$ and $\Gamma$ is bounded and of class $C^{2k_0-2}$. 
Then for~$u_0\in \cD(A^{k_0})$ the parabolic transmission problem $(\mathcal{P}_{\mathrm{T}})$ admits a classical solution. 
\item Furthermore, if $u_0\in\cD(A^k)$ for $k\geq k_0$, the coefficients $a_{ij}$ satisfy $(a_\pm)_{ij}\in C_b^{2k-1}(\bar{D}_\pm)$ and 
$\Gamma$ is bounded of class $C^{2k}$,
this classical solution $u$ is such that
$$
u\in C^{k-j}\left([0,T]\;;\;C^{n(j)}(\bar{D}_+)\cap C^{n(j)}(\bar{D}_-)\right),\quad \lceil d/4\rceil\leq j\leq k$$
with $n(j) = \lfloor 2j - \frac{d}{2}\rfloor$.
\end{itemize}
\end{theorem}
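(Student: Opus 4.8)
The strategy is to realize the solution as an orbit of a strongly continuous semigroup and then bootstrap spatial regularity from membership in iterated domains via elliptic transmission estimates. Concretely, I would proceed as follows.

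\textbf{Step 1: The semigroup.} Under $(\mathbf{E})$ the bilinear form $a(u,v)=\sum_{i,j}\int_{\R^d} a_{ij}D_ju\,D_iv$ on $H^1(\R^d)$ is bounded, symmetric and coercive (after a shift), so $-A$ is self-adjoint, nonnegative and generates an analytic contraction semigroup $(e^{tA})_{t\ge0}$ on $L^2(\R^d)$ by the Hille--Yosida / Lumer--Phillips theorem. For $u_0\in\cD(A^{k_0})$ set $u(t,\cdot)=e^{tA}u_0$. Then by the standard regularity theory for analytic semigroups $t\mapsto u(t,\cdot)$ is $C^\infty$ from $(0,\infty)$ into every $\cD(A^m)$, and for $u_0\in\cD(A^{k})$ we get $u\in C^{k-j}([0,T];\cD(A^{j}))$ for $0\le j\le k$ (differentiating the Duhamel/orbit representation $j$ times in time costs $j$ applications of $A$, i.e.\ drops $j$ levels in the domain scale); this is a purely functional-analytic fact requiring only $u_0\in\cD(A^k)$. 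It is here that the initial condition is honoured, in the $C([0,T];C(\R^d))$ sense once we have enough Sobolev regularity to embed.

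\textbf{Step 2: From iterated domains to Sobolev transmission spaces.} The crux is to show
$$
\cD(A^{j})\hookrightarrow H^{2j}(D_+)\cap H^{2j}(D_-)\quad\text{(with matching traces and co-normal traces)},
$$
i.e.\ that each application of $A$ that one can undo gains two full (interior and up-to-the-boundary) Sobolev derivatives on each side of $\Gamma$. This is exactly an elliptic transmission regularity statement: if $v\in H^1(\R^d)$, $Av=f\in H^{2(j-1)}(D_\pm)$, then because $v$ weakly satisfies $-\nabla\cdot(a\nabla v)=-f$ on $D$ together with continuity of $v$ across $\Gamma$ and the homogeneous co-normal transmission condition $\langle a_+\nabla v_+-a_-\nabla v_-,\nu\rangle=0$ (which is the natural boundary condition encoded in the weak formulation), the results of McLean \cite{McLean-2000} on elliptic systems / transmission problems on $C^m$ domains, applied inductively and localized near $\Gamma$ and in the interior separately, yield $v_\pm\in H^{2j}(D_\pm)$ with norm controlled by $\|f\|_{H^{2(j-1)}(D_\pm)}+\|v\|_{L^2}$. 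This is the step where the smoothness hypotheses on the $a_{ij}$ ($(a_\pm)_{ij}\in C_b^{2k-1}(\bar D_\pm)$) and on $\Gamma$ (class $C^{2k}$) are consumed: to reach $H^{2k}$ one needs coefficients of the operator in $C^{2k-2}$-ish and the interface at least $C^{2k}$, and one must be slightly careful about the unboundedness of $D_-$ (use that $\Gamma$ is bounded, so away from $\Gamma$ the equation is a smooth interior equation and coefficients are $C_b$).

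\textbf{Step 3: Sobolev to classical via embedding, and identification of the solution.} Combining Steps 1 and 2, $u\in C^{k-j}([0,T];H^{2j}(D_+)\cap H^{2j}(D_-))$. By the Sobolev embedding $H^{2j}(D_\pm)\hookrightarrow C^{n(j)}(\bar D_\pm)$ with $n(j)=\lfloor 2j-\tfrac d2\rfloor$ (valid on the $C^{2k}$, hence Lipschitz-with-cone, domains $D_\pm$, for $2j-\tfrac d2\notin\N$; the floor accommodates the general case), one obtains precisely $u\in C^{k-j}([0,T];C^{n(j)}(\bar D_+)\cap C^{n(j)}(\bar D_-))$ for all $j$ with $n(j)\ge0$, i.e.\ $j\ge\lceil d/4\rceil$. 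For the first bullet, $k_0$ is chosen to be the smallest $k$ for which $n(k-1)\ge2$ and $k-2\ge1$ (so that $u$ has two spatial derivatives on each side and one time derivative), which unwinds to the stated formula distinguishing $d$ even/odd; with that one checks directly that $u$ lies in the class \eqref{eq:esp-sol-classique}, satisfies the PDE pointwise on $(0,T]\times D$ (the weak equation plus $H^{2k_0}_{\mathrm{loc}}\subset C^2$ regularity turns the distributional identity $Au=\partial_t u$ into a classical one), the co-normal traces $\nabla_x u_\pm(t,\cdot)$ exist and satisfy $(\star)$ because the $H^{2j}$-transmission theory in Step 2 precisely encodes that boundary condition, continuity across $\Gamma$ holds since $u(t,\cdot)\in H^1(\R^d)\cap C(\R^d)$, and $u(0,\cdot)=u_0$ by strong continuity of the semigroup together with the embedding. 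Uniqueness (if needed for "the classical solution" in the second bullet) follows from an energy estimate: a classical solution is a weak solution, and weak solutions of the Cauchy problem for $-A$ are unique.

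\textbf{Main obstacle.} The delicate point is Step 2 — the up-to-the-interface elliptic regularity with the co-normal \emph{transmission} condition, carried out to arbitrarily high order $H^{2j}$, with honest control of constants and with the subtlety that $D_-$ is unbounded. One must set this up so that McLean's theorems (stated for bounded $C^m$ domains / standard elliptic boundary value problems) apply: localize near $\Gamma$ with a partition of unity, flatten the interface, reduce the transmission problem to a system on the two sides coupled by the two interface conditions (Dirichlet-type continuity + Neumann-type co-normal jump), verify the Lopatinski--Shapiro (complementing) condition for this system, and invoke the shift theorem; in the interior and near infinity use interior Schauder/Sobolev estimates with the $C_b$ coefficients. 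Getting the exact bookkeeping between the regularity of $a_{ij}$, the regularity of $\Gamma$, the index $k$, and the resulting Sobolev order — so that it matches the stated hypotheses $(a_\pm)_{ij}\in C_b^{2k-1}$, $\Gamma\in C^{2k}$ — is the part that needs real care; everything else is routine semigroup theory and Sobolev embedding.
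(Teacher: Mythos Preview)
Your proposal is correct and follows essentially the same route as the paper: Hille--Yosida gives $u\in C^{k-j}([0,T];\cD(A^j))$, an inductive elliptic transmission estimate yields $\cD(A^j)\hookrightarrow H^{2j}(D_+)\cap H^{2j}(D_-)$, and Sobolev embedding finishes. The only point worth noting is that in Step~2 you anticipate having to verify a Lopatinski--Shapiro condition by hand after reducing the transmission problem to a coupled system; in fact McLean's Theorem~4.20 is stated directly for transmission problems with prescribed jumps $[u]_\Gamma$ and $[\cB_\nu u]_\Gamma$, so once you observe (as the paper does in its Proposition~3.6) that membership in $\cD(A)$ forces both jumps to vanish, the regularity shift $H^{2(j-1)}\to H^{2j}$ is an off-the-shelf citation rather than a genuine obstacle.
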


To prove Theorem \ref{thm-existe-sol-classique} requires to study in a first time the associated elliptic resolvent equation, in a weak sense.
More precisely, for a source term $f\in L^2(\R^d)$ we will seek for a solution $u$ in $\cD(A)$ of
\begin{equation}
\label{eq:resol}
u-Au=f
\end{equation}
(see Proposition \ref{prop:sol-faible-ell} below). 

Then the idea is to apply in $L^2(\R^d)$ a version of the Hille-Yosida theorem that states that for $u_0\in \cD(A^k)$, $k\geq 2$, there is a solution $u$  to
$\frac{\mathrm{d}u}{\mathrm{d}t}=Au,\;\; u(0)=u_0$, living in
$C^{k-j}\big([0,T];\, \cD(A^j)  \big),\; 0\leq j\leq k$ (see Proposition \ref{prop:hille} below).  

As we will have studied the weak smoothness of functions living in the $\cD(A^k)$'s (Proposition \ref{prop-sautzero} and Corollary \ref{cor:regDA}), we will be able to conclude, using Sobolev embedding arguments.

\begin{remark} 1)
 In the  classical situation with smooth coefficients studied for instance in \cite{Friedman-1964} Chap. 1 (or \cite{Lieberman-1996}, Theorem 5.14), a unique classical solution to the parabolic PDE exists as soon as the $a_{ij}$'s are bounded and H\"older continuous and satisfy $(\mathbf{E})$, and $u_0$ is continuous and satisfies some growth condition. 
 
 Here we ask additional smoothness on the coefficients
$(a_\pm)_{ij}$'s inside the domains $D_\pm$. Indeed, because of the discontinuity of $a$ across $\Gamma$ we are led to use a different technique of proof: unlike the parametrix method in the classical case, this additional smoothness is required for the use of the Hille-Yosida theorem and the Sobolev embeddings. 

Note that with this methodology of proof these additional assumptions would still be needed  if our coefficients and the solution were smooth at the interface. Note that with this approach the assumptions on the initial condition $u_0$ are understood in a weak sense (and are different).

2) Our result is also different from the one in \cite{Lady-et-al-1967} (Theorem 13.1; see also \cite{lady1}). In this reference the authors study the classical smoothness of the parabolic transmission problem by studying first the smoothness of $\partial_tu$ (to that aim they differentiate with respect to time the initial equation). Then they study the smoothness with respect to the space variable by using results for the elliptic transmission problem, involving difference quotient techniques. But by doing so they get estimates on subdomains of the form
$[\varepsilon,T]\times \overline{D}_\pm$ with $0<\varepsilon$.
Here, we manage to study the global regularity of the classical solution of ${\cal P}_T$ in the whole domains $[0,T]\times \overline{D}_\pm$.
\end{remark}

\subsection{Study of the associated elliptic problem and of the domains $\cD(A^k)$} 

In this subsection we establish the existence of a solution to \eqref{eq:resol} belonging to~$\cD(A)$ and study its smoothness properties, together with the ones of functions belonging to the iterated domains $\cD(A^k)$, for~$k\geq 1$.

\vspace{0.2cm}

We recall that the coefficients $a_{ij}$ are assumed to be bounded by $\Lambda$ so that we may define the following continuous bilinear and symmetric form, which will be used extensively in the sequel
\begin{equation}
\label{eq:forme-bilin}
\cE(u,v)= \sum_{i,j=1}^d\langle a_{ij}D_ju,D_iv\rangle_{L^2(\R^d)},\quad\forall u,v\in H^1(\R^d).
\end{equation}

Let $u\in \cD(A)$. Using the definition of $Au$ as a distribution acting on $C^\infty_c(\R^d;\R)$, and the density of $C^\infty_c(\R^d;\R)$
in $H^1(\R^d)=H^1_0(\R^d)$, one can establish the following relation, linking $A$ and the form \eqref{eq:forme-bilin}:
\begin{equation}
\label{eq:lienAE}
\cE(u,v)=\langle -Au,v\rangle_{L^2(\R^d)},\quad\forall v\in H^1(\R^d).
\end{equation}

\subsubsection{Some results on weak solutions of elliptic transmission PDEs} 

Here we gather some preliminary results on weak solutions of elliptic transmission PDEs that rely mainly on \cite{McLean-2000} Chap. 4, pp. 141-145.

We recall that for $u\in L^2(\R^d)$, we denote $u_+$ (resp. $u_-$) the restriction of $u$ to $D_+$ (resp.~$D_-$). It may happen that we use this notation for restricted distributions also.

We introduce the following notation for the jump across $\Gamma$ of $u\in L^2(\R^d)$, with $u_+\in H^1(D_+)$ and $u_-\in H^1(D_-)$:
$$
[u]_\Gamma=\gamma(u_+)-\gamma(u-).$$
If $[u]_\Gamma=0$ we shall simply write $\gamma(u)=\gamma(u_+)=\gamma(u_-)$.
We have the two following lemmas (the proof of the first one is straightforward).

\begin{lemma}
\label{lem:restri}
Let $v\in L^2(\R^d)$. Then, for any $1\leq i\leq d$, the distribution $(D_iv)_\pm$ is equal to $D_i(v_\pm)$. As a consequence, if $v\in H^1(\R^d)$,
then $v_\pm\in H^1(D_\pm)$.
\end{lemma}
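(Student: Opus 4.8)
The statement is a direct consequence of the way restriction of distributions interacts with distributional differentiation, so the plan is simply to unwind the two definitions involved and compare them on test functions supported in $D_\pm$. I treat the case of $D_+$, the sign $-$ being identical.

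First I would recall the relevant definitions. Seeing $v\in L^2(\R^d)$ as a distribution on $\R^d$, the distribution $D_iv$ acts on $\varphi\in C^\infty_c(\R^d;\R)$ by $\langle D_iv,\varphi\rangle=-\int_{\R^d}v\,\partial_i\varphi$, and its restriction $(D_iv)_+$ is by definition the distribution on the open set $D_+$ obtained by testing $D_iv$ only against those $\varphi\in C^\infty_c(D_+;\R)$, each identified with its extension by zero to $\R^d$. On the other hand $D_i(v_+)$ is the distributional derivative, \emph{on $D_+$}, of the function $v_+\in L^2(D_+)$, acting by $\langle D_i(v_+),\varphi\rangle=-\int_{D_+}v_+\,\partial_i\varphi$ for $\varphi\in C^\infty_c(D_+;\R)$.

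The only point requiring any care --- and it is elementary --- is the observation that if $\varphi\in C^\infty_c(D_+;\R)$ then $\partial_i\varphi$ again has compact support inside the open set $D_+$, so that extension by zero commutes with differentiation. Testing both distributions against such a $\varphi$ then yields
\[
\langle (D_iv)_+,\varphi\rangle=-\int_{\R^d}v\,\partial_i\varphi=-\int_{D_+}v\,\partial_i\varphi=-\int_{D_+}v_+\,\partial_i\varphi=\langle D_i(v_+),\varphi\rangle,
\]
the middle equalities using that $\partial_i\varphi$ vanishes off $D_+$ and that $v$ and $v_+$ agree on $D_+$. As the two distributions agree on all of $C^\infty_c(D_+;\R)$ they coincide, which is the first assertion.

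For the consequence I would then argue as follows. If $v\in H^1(\R^d)$ then each $D_iv$ belongs to $L^2(\R^d)$, hence its restriction $(D_iv)_+$ belongs to $L^2(D_+)$, the restriction of a square-integrable function to a measurable subset being square-integrable. By the first part this restriction equals $D_i(v_+)$, so $D_i(v_+)\in L^2(D_+)$ for every $i$; together with $v_+\in L^2(D_+)$ this is precisely the definition of $v_+\in H^1(D_+)$. There is no genuine obstacle in the whole argument: everything reduces to the identification of $C^\infty_c(D_+)$ with its image under extension by zero in $C^\infty_c(\R^d)$, which is exactly what makes the restriction of distributions well defined and compatible with $D_i$.
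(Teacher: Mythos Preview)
Your proof is correct and is exactly the standard unwinding of the definitions that the paper has in mind; indeed the paper does not spell out a proof at all, simply stating that it is ``straightforward''. There is nothing to add or compare.
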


\begin{lemma}[\cite{McLean-2000}, Exercise 4.5]
\label{lem:exo-mclean}
Suppose $u\in L^2(\R^d)$ with $u_\pm\in H^1(D_\pm)$. Then $u\in H^1(\R^d)$ if and only if $[u]_\Gamma=0$ a.e. on $\Gamma$.
\end{lemma}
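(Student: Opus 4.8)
The plan is to compute, for each coordinate direction $i$, the distributional derivative $D_iu$ on $\R^d$ by integrating by parts separately over $D_+$ and $D_-$, and to read off from the boundary terms that the only obstruction to $D_iu$ being an $L^2$ function is the jump $[u]_\Gamma$. First I would introduce the function $g_i\in L^2(\R^d)$ defined to equal $D_i(u_\pm)$ on $D_\pm$; this is well defined in $L^2$ since $u_\pm\in H^1(D_\pm)$ and $\Gamma$ is Lebesgue-negligible. The core step is to establish, for every $\varphi\in C^\infty_c(\R^d)$, the identity
\begin{equation}
\label{eq:Diu-jump}
\langle D_iu,\varphi\rangle = \int_{\R^d} g_i\,\varphi\,dx + \int_\Gamma [u]_\Gamma\,\gamma(\varphi)\,\nu_i\,d\sigma,
\end{equation}
that is, $D_iu = g_i + \bigl([u]_\Gamma\,\nu_i\bigr)\delta_\Gamma$ in $\mathcal{D}'(\R^d)$, where $\delta_\Gamma$ denotes the surface measure on $\Gamma$. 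To derive \eqref{eq:Diu-jump} I would start from $\langle D_iu,\varphi\rangle = -\int_{D_+}u_+\,\partial_i\varphi - \int_{D_-}u_-\,\partial_i\varphi$ and apply Green's first identity for $H^1$ functions on each domain (McLean, Chap.~3), keeping in mind that along $\Gamma$ the outward normal of the bounded interior domain $D_+$ is $-\nu$ while that of the exterior domain $D_-$ is $+\nu$. The two boundary contributions then combine into a single term carrying the factor $\gamma(u_+)-\gamma(u_-)=[u]_\Gamma$.

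Granting \eqref{eq:Diu-jump}, both implications follow quickly. For the ``if'' direction, if $[u]_\Gamma=0$ a.e.\ on $\Gamma$ then the surface term vanishes, so $D_iu=g_i\in L^2(\R^d)$ for every $i$, whence $u\in H^1(\R^d)$. For the ``only if'' direction, suppose $u\in H^1(\R^d)$; then $D_iu\in L^2(\R^d)$, and by Lemma~\ref{lem:restri} its restriction to $D_\pm$ coincides with $D_i(u_\pm)$, i.e.\ $D_iu=g_i$ as elements of $L^2(\R^d)$. Comparing with \eqref{eq:Diu-jump} forces the distribution $\bigl([u]_\Gamma\,\nu_i\bigr)\delta_\Gamma$ to vanish, that is $\int_\Gamma [u]_\Gamma\,\nu_i\,\gamma(\varphi)\,d\sigma=0$ for all $\varphi\in C^\infty_c(\R^d)$. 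Since every smooth function on the compact $C^2$ surface $\Gamma$ extends to an element of $C^\infty_c(\R^d)$, the traces $\gamma(\varphi)$ are dense in $L^2(\Gamma)$, so $[u]_\Gamma\,\nu_i=0$ a.e.\ on $\Gamma$ for each $i$. Summing squares and using $|\nu|=1$ gives $|[u]_\Gamma|^2=\sum_{i=1}^d|[u]_\Gamma\,\nu_i|^2=0$ a.e., hence $[u]_\Gamma=0$ a.e.\ on $\Gamma$.

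The only delicate point is the rigorous justification of the Green identity \eqref{eq:Diu-jump}, on two accounts: the domain $D_-$ is unbounded, and $u_\pm$ are merely $H^1$ rather than smooth. The second issue is handled by the density of $C^\infty(\bar D_\pm)$ in $H^1(D_\pm)$ together with the continuity of the trace operator $\gamma:H^1(D_\pm)\to H^{1/2}(\Gamma)$, which lets one pass the formula from smooth functions to general $H^1$ functions. The first issue is harmless because $\varphi$ has compact support: one applies Green's formula on $D_-\cap B_R$ for $R$ large enough that $\mathrm{supp}\,\varphi\subset B_R$, and the sphere $\partial B_R$ contributes nothing, leaving only the integral over the bounded boundary $\Gamma$. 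I expect the bookkeeping of the normal orientations — the sign flip between the interior and exterior domains that produces the jump $[u]_\Gamma$ — to be the one place where genuine care is required; everything else is routine.
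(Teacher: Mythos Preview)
The paper does not actually prove this lemma; it simply cites it as Exercise~4.5 in McLean and moves on. Your proposal is the standard proof and is correct: the key identity $D_iu=g_i+\bigl([u]_\Gamma\,\nu_i\bigr)\delta_\Gamma$ obtained by integration by parts on each subdomain is exactly the right tool, and your handling of both the unbounded domain (via compact support of~$\varphi$) and the density argument for the converse is sound.
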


We shall consider restricted operators and bilinear forms in the following sense. We define 
$A_+:H^1(D_+)\to H^{-1}(D_+)$ by
$$
\forall v\in H^1(D_+),\quad A_+v=\sum_{i,j=1}^dD_i\big((a_+)_{ij}D_jv\big).$$
We define $A_-:H^1(D_-)\to H^{-1}(D_-)$ in the same manner (note that we do not specify here any domain~$\cD(A_\pm)$). Further, we define
\begin{equation*}
\cE_\pm(u,v)= \sum_{i,j=1}^d\int_{D_\pm} (a_\pm)_{ij}D_ju\,D_iv,\quad\forall u,v\in H^1(D_\pm).
\end{equation*}
In the same fashion as for Equation \eqref{eq:lienAE}, we have, for $u_\pm\in H^1(D_\pm)$ with $A_\pm u_\pm \in L^2(D_\pm)$,
\begin{equation}
\label{eq:lienAEp}
\cE_\pm(u_\pm,v)=\int_{D_\pm}(-A_\pm u_\pm)v,\quad\forall v\in H^1_0(D_\pm).
\end{equation}

Imagine now that in \eqref{eq:lienAEp} we wish to take the test function in $H^1(D_\pm)$ instead of $H^1_0(D_\pm)$. There will still be a link between $A_\pm$ and $\cE_\pm$, but through Green type identities, involving 
co-normal derivatives and boundary integrals. We have the following result.

\begin{proposition}[First Green identity, extended version; see \cite{McLean-2000} Theorem 4.4, point i)]
\label{prop:green2}
Assume $\Gamma$ is bounded and $C^2$.
Let $u\in L^2(\R^d)$ with $u_+\in H^1(D_+)$ and $u_-\in H^1(D_-)$. Assume $A_+u_+\in L^2(D+)$, $A_-u_-\in L^2(D-)$. Then there exist uniquely defined
elements~$\cB^\pm_\nu u\in H^{-\frac 1 2}(\Gamma)$ such that
\begin{equation}
\label{eq:greenp}
\cE_+(u_+,v)=\int_{D_+}(-A_+u_+)v-\Big( \cB^+_\nu u,\gamma(v) \Big)_\Gamma \;\;,\quad\forall v\in H^1(D_+)
\end{equation}
and
\begin{equation}
\label{eq:greenm}
\cE_-(u_-,v)=\int_{D_-}(-A_-u_-)v+ \Big( \cB^-_\nu u,\gamma(v) \Big)_\Gamma \;\;,\quad\forall v\in H^1(D_-).
\end{equation}
\end{proposition}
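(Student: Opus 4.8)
\textbf{Proof strategy for Proposition \ref{prop:green2}.}

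The plan is to reduce the statement to the classical first Green identity on each subdomain $D_\pm$ separately, and then to identify the boundary term with a well-defined element of $H^{-1/2}(\Gamma)$ by a duality (Lax--Milgram type) argument, exactly as in \cite{McLean-2000} Theorem 4.4. First I would treat $D_+$ (the bounded interior domain) and $D_-$ (whose boundary $\Gamma$ is nonetheless bounded) in parallel, using that $\Gamma$ is $C^2$ so the trace operator $\gamma:H^1(D_\pm)\to H^{1/2}(\Gamma)$ is bounded and surjective, with a bounded right inverse (a lifting operator $Z:H^{1/2}(\Gamma)\to H^1(D_\pm)$ with $\gamma Z=\mathrm{id}$). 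The key point is that the linear functional
$$
\phi \;\longmapsto\; \cE_+(u_+, Z\phi) - \int_{D_+}(-A_+u_+)\, Z\phi
$$
is well-defined on $H^{1/2}(\Gamma)$, is independent of the choice of lifting $Z$, and is bounded; hence by Riesz/duality it is represented by a unique $\cB^+_\nu u \in H^{-1/2}(\Gamma)$ satisfying \eqref{eq:greenp}, and symmetrically for $D_-$ with the opposite sign convention coming from the orientation of $\nu$ (pointing into $D_+$).

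The two facts to establish carefully are: (i) that the functional above kills $H^1_0(D_\pm)$, so it genuinely descends to $H^{1/2}(\Gamma)$ — this is precisely \eqref{eq:lienAEp}, which holds because $A_\pm u_\pm \in L^2(D_\pm)$; and (ii) that it is continuous in the $H^{1/2}(\Gamma)$-norm. For (ii) one bounds $|\cE_+(u_+,v)| \le \Lambda\, \|u_+\|_{H^1(D_+)}\|v\|_{H^1(D_+)}$ using the $\Lambda$-bound on the $a_{ij}$, bounds $|\int_{D_+}(-A_+u_+)v| \le \|A_+u_+\|_{L^2(D_+)}\|v\|_{L^2(D_+)}$, and then uses $\|v\|_{H^1(D_+)} = \|Z\phi\|_{H^1(D_+)} \le C\|\phi\|_{H^{1/2}(\Gamma)}$ for the chosen bounded lifting. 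Uniqueness of $\cB^\pm_\nu u$ is immediate from the surjectivity of $\gamma$: if two elements of $H^{-1/2}(\Gamma)$ induce the same functional against all $\gamma(v)$, $v\in H^1(D_\pm)$, they agree on all of $H^{1/2}(\Gamma)$.

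I expect the main obstacle to be purely bookkeeping rather than conceptual: carefully handling the unbounded domain $D_-$ (ensuring the lifting operator and trace theorem are available there — which they are, since $\Gamma=\partial D_-$ is bounded and $C^2$, so one may work in a bounded collar neighbourhood of $\Gamma$ and extend by a cutoff), and getting the sign in \eqref{eq:greenm} consistent with the convention $\gamma_-(x) = -a_-(x)\nu(x)$ and with $\nu$ pointing into $D_+$. Since this is a direct citation of \cite{McLean-2000} Theorem 4.4 i), the cleanest route is to record the statement, indicate that it is the first Green identity applied on each of $D_+$ and $D_-$ with $\Gamma$ as (part of) the boundary, and refer the reader there for the Lax--Milgram/duality details; the only thing genuinely added here is the observation that everything applies verbatim under our standing hypotheses ($\Gamma$ bounded $C^2$, $a_{ij}$ bounded, $u_\pm\in H^1(D_\pm)$ with $A_\pm u_\pm\in L^2(D_\pm)$).
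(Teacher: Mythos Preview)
Your proposal is correct and is precisely the standard argument behind \cite{McLean-2000} Theorem~4.4~i). Note, however, that the paper does not give its own proof of this proposition at all: it is stated as a direct citation of McLean and used as a black box. So there is nothing to compare against beyond observing that your sketch (lifting $Z:H^{1/2}(\Gamma)\to H^1(D_\pm)$, checking via \eqref{eq:lienAEp} that the functional $v\mapsto \cE_\pm(u_\pm,v)-\int_{D_\pm}(-A_\pm u_\pm)v$ vanishes on $H^1_0(D_\pm)$ and hence descends to traces, boundedness, and uniqueness from surjectivity of $\gamma$) is exactly the content of the cited reference. One cosmetic remark: the bounded linear functional you write down is by definition an element of the dual $H^{-1/2}(\Gamma)$, so ``Riesz'' is not quite the right word; and with your sign convention the functional actually represents $-\cB^+_\nu u$ rather than $\cB^+_\nu u$ in \eqref{eq:greenp}, but you flag the sign bookkeeping yourself and it does not affect the argument.
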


The elements $\cB^\pm_\nu u$ in Proposition \ref{prop:green2} are the one-sided co-normal derivatives of $u$ on $\Gamma$. 

To fix ideas, note that
 under the stronger assumptions that the $(a_\pm)_{ij}$'s are in $C^1_b(\bar{D}_\pm;\R)$,  and $u_\pm\in H^2(D_\pm)$, we have
\begin{equation*}
\label{eq:derconorm}
\cB^{\pm}_\nu u=\nu^*\gamma(a_\pm\nabla u_\pm)=\sum_{i=1}^d\sum_{j=1}^d\nu_i\gamma\big((a_\pm)_{ij}D_ju_\pm\big)\quad\text{ on  }\;\;\Gamma
\end{equation*}
(note that as the $(a_\pm)_{ij}D_ju_\pm$'s are in $H^1(D_\pm)$ the trace terms are correctly defined in the above expression). Thus one understands that the change of sign in front
of the $(\cdot,\cdot)_\Gamma$ term
between \eqref{eq:greenp} and \eqref{eq:greenm} is due to the fact that $-\nu$ is the outward normal to $D_+$ and $\nu$ is the outward normal to $D_-$.

For details on the definition of 
$\cB^\pm_\nu u$ under the weaker assumptions of Proposition \ref{prop:green2}, see \cite{McLean-2000} pp 116-117.

\vspace{0.2cm}

Finally we introduce a notation for the jumps across $\Gamma$ of the co-normal derivative of a function $u$  satisfying the assumptions of Proposition \ref{prop:green2}:
$$
\big[ \cB_\nu u \big]_\Gamma=\cB^+_\nu u-\cB^-_\nu u \in  H^{-1/2}(\Gamma).$$
We have the following result.

\begin{lemma}[Two-sided Green identity; inspired by \cite{McLean-2000} Lemma 4.19, Equation (4.33)]
\label{lem:green3}
Assume $\Gamma$ is bounded and $C^2$.
Let $u\in H^1(\R^d)$. Let $f_+\in L^2(D_+)$ and $f_-\in L^2(D_-)$ and assume
\begin{equation}
\label{eq:respm}
u_\pm-A_\pm u_\pm=f_\pm\quad\text{on }D_\pm.
\end{equation}
Set $f=f_++f_-$, then
\begin{equation}
\label{eq:green2s}
\langle u,v\rangle_{L^2(\R^d)}+\cE(u,v)=\langle f,v\rangle_{L^2(\R^d)}-\Big(\big[ \cB_\nu u \big]_\Gamma\,,\,\gamma(v)\Big)_\Gamma\;\;,
\quad\forall v\in H^1(\R^d).
\end{equation}
\end{lemma}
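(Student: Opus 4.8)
The plan is to obtain \eqref{eq:green2s} by adding the two one-sided first Green identities from Proposition \ref{prop:green2} and then exploiting the fact that $u\in H^1(\R^d)$ forces the traces of $v$ from the two sides to agree on $\Gamma$. First I would check that the hypotheses of Proposition \ref{prop:green2} are met: since $u\in H^1(\R^d)$, Lemma \ref{lem:restri} gives $u_\pm\in H^1(D_\pm)$; and from \eqref{eq:respm} we get $A_\pm u_\pm = u_\pm - f_\pm$, which lies in $L^2(D_\pm)$ because $u_\pm\in L^2(D_\pm)$ and $f_\pm\in L^2(D_\pm)$. Hence $\cB_\nu^\pm u\in H^{-1/2}(\Gamma)$ are well defined and \eqref{eq:greenp}--\eqref{eq:greenm} hold.

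Next, fix an arbitrary $v\in H^1(\R^d)$. Applying \eqref{eq:greenp} to the test function $v_+\in H^1(D_+)$ (again using Lemma \ref{lem:restri}) and \eqref{eq:greenm} to $v_-\in H^1(D_-)$, and substituting $-A_\pm u_\pm = f_\pm - u_\pm$ from \eqref{eq:respm}, gives
\begin{equation*}
\cE_+(u_+,v_+)=\int_{D_+}(f_+-u_+)v_+-\big(\cB_\nu^+u,\gamma(v_+)\big)_\Gamma
\end{equation*}
and
\begin{equation*}
\cE_-(u_-,v_-)=\int_{D_-}(f_--u_-)v_-+\big(\cB_\nu^-u,\gamma(v_-)\big)_\Gamma .
\end{equation*}
Since $v\in H^1(\R^d)$, Lemma \ref{lem:exo-mclean} yields $[v]_\Gamma=0$, i.e. $\gamma(v_+)=\gamma(v_-)=\gamma(v)$. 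Adding the two displays, the two boundary terms combine into $-\big(\cB_\nu^+u-\cB_\nu^-u,\gamma(v)\big)_\Gamma=-\big([\cB_\nu u]_\Gamma,\gamma(v)\big)_\Gamma$. On the left, $\cE_+(u_+,v_+)+\cE_-(u_-,v_-)=\cE(u,v)$ because the integral over $\R^d$ defining $\cE$ in \eqref{eq:forme-bilin} splits over $D_+\cup D_-=\R^d\setminus\Gamma$ (the interface $\Gamma$ has Lebesgue measure zero) and, by Lemma \ref{lem:restri}, $(D_ju)_\pm=D_j(u_\pm)$ so the integrands agree. On the right, $\int_{D_+}(f_+-u_+)v_++\int_{D_-}(f_--u_-)v_- = \langle f,v\rangle_{L^2(\R^d)}-\langle u,v\rangle_{L^2(\R^d)}$ with $f=f_++f_-$. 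Rearranging gives exactly \eqref{eq:green2s}.

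There is no serious obstacle here; the statement is essentially a bookkeeping consequence of the one-sided identities, and the only points requiring a word of care are the measure-zero argument that lets $\cE$ decompose as $\cE_++\cE_-$, the compatibility of distributional restrictions with traces (Lemmas \ref{lem:restri} and \ref{lem:exo-mclean}), and checking that the regularity hypotheses of Proposition \ref{prop:green2} follow from \eqref{eq:respm}. If one wanted to be completely self-contained one could instead invoke \cite{McLean-2000} Lemma 4.19 directly, but deriving it from Proposition \ref{prop:green2} as above keeps the argument transparent.
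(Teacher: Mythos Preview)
Your proposal is correct and follows essentially the same approach as the paper: verify the hypotheses of Proposition \ref{prop:green2} via Lemma \ref{lem:restri} and \eqref{eq:respm}, apply the two one-sided Green identities to $v_\pm$, use $\gamma(v_+)=\gamma(v_-)$ to combine the boundary terms, and invoke Lemma \ref{lem:restri} again to identify $\cE_+(u_+,v_+)+\cE_-(u_-,v_-)$ with $\cE(u,v)$. The only cosmetic difference is that you cite Lemma \ref{lem:exo-mclean} for the trace equality, whereas the paper simply notes $\gamma(v_+)=\gamma(v_-)=\gamma(v)$ in passing.
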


\begin{remark}
\label{rem:restri}
Note that in the above proposition $u_\pm\in H^1(D_\pm)$, thanks to Lemma \ref{lem:restri}. Equation \eqref{eq:respm} means that 
 $\langle u_\pm-A_\pm u_\pm,\,\varphi\rangle_{H^{-1}(D_\pm),H^1_0(D_\pm)}=\langle f,\varphi\rangle_{L^2(D_\pm)}$, for all
 $\varphi\in C^\infty_c(D_\pm;\R)$. Therefore $A_\pm u_\pm\in L^2(D_\pm)$ and by Proposition \ref{prop:green2} the element $\big[ \cB_\nu u \big]_\Gamma$ is well defined.
Then same remark holds for the forthcoming Proposition \ref{prop:reg-sol-faible}.
\end{remark}

Our notations being different from the ones in \cite{McLean-2000}, we provide the short proof of Lemma \ref{lem:green3} for the sake of clarity.

\begin{proof}
 Taking into account Remark \ref{rem:restri} we can use Proposition \ref{prop:green2}, and summing
 \eqref{eq:greenp} and  \eqref{eq:greenm} one gets for any $v\in H^1(\R^d)$ (note that $\gamma(v_+)=\gamma(v_-)=\gamma(v)$)
 $$
\langle u,v\rangle_{L^2(\R^d)}+\cE_+(u_+,v_+)+\cE_-(u_-,v_-)=\langle f,v\rangle_{L^2(\R^d)}- \Big( \big[ \cB_\nu u \big]_\Gamma\,,\,\gamma(v) 
\Big)_\Gamma
.$$
To complete the proof it suffices to notice that, thanks to Lemma \ref{lem:restri}, we have
$$
\begin{array}{lll}
\cE_+(u_+,v_+)+\cE_-(u_-,v_-)&=&\ds\sum_{i,j=1}^d\Big\{ \int_{D_+}(a_+)_{ij}(D_ju)_+(D_iv)_+ \\
&&\hspace{2cm}
+ \int_{D_-}(a_-)_{ij}(D_ju)_-(D_iv)_- \Big\}\\
&=&\cE(u,v).
\end{array}$$
\end{proof}


We recall  now results on the smoothness of weak solutions of elliptic transmission PDEs.

\begin{proposition} [\cite{McLean-2000}, Theorem 4.20]
\label{prop:reg-sol-faible}

Let $G_1$ and $G_2$ be bounded open connected subsets of $\R^d$, such that $\overline{G_1}\subset G_2$ and $G_1$ intersects 
$\Gamma$, and put
$$
D^j_\pm=G_j\cap D_\pm\quad\text{and}\quad \Gamma_j=\Gamma\cap G_j\quad\text{for} \;\; j=1,2.$$
Assume that the set $G_2$ is constructed in such a way that there is a $C^{r+2}$-diffeomorphism between $\Gamma_2$ and a bounded portion of the hyperplan $x_d=0$.

Assume  $(\mathbf{E})$.

Let $r\in \N$. Assume that the coefficients $(a_{\pm})_{ij}$ belong to $C^{r+1}(\overline{D^2_\pm} ; \R)$. 

Let $f_\pm\in L^2(D_\pm)$ with $f_\pm\in H^r(D^2_\pm)$. Let $u\in L^2(\R^d)$ with $u \in H^1(G_2)$ satisfying 
$$u_\pm-A_\pm u_\pm=f_\pm\quad\text{on } D^2_\pm  $$
and 
$\big[ \cB_\nu u \big]_\Gamma\in H^{\frac 1 2+r}(\Gamma_2)$.
Then $u_\pm\in H^{2+r}(D^1_\pm)$.
\end{proposition}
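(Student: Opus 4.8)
The plan is to follow Nirenberg's difference-quotient method, adapted to the transmission setting, after localizing and flattening the interface. First I would reduce to the model configuration in which $\Gamma_2$ coincides with a bounded portion $\omega$ of the hyperplane $\{x_d=0\}$, while $D^2_+$ and $D^2_-$ become the two half-neighbourhoods $\{x_d>0\}$ and $\{x_d<0\}$: pushing $u$, $a_\pm$ and $f_\pm$ through the given $C^{r+2}$-diffeomorphism turns $A_\pm$ into divergence-form operators whose coefficients are again of class $C^{r+1}$ (the loss of one derivative in the diffeomorphism is harmless for a second-order operator), keeps $f_\pm\in H^r$, and transforms $\cB^\pm_\nu u$ together with the jump $[\cB_\nu u]_\Gamma$ into the corresponding conormal objects for the transformed operators, preserving the hypothesis $[\cB_\nu u]_\Gamma\in H^{1/2+r}(\omega)$. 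A cutoff $\chi\in C^\infty_c(G_2)$ with $\chi\equiv 1$ on $\overline{G_1}$ confines all estimates to a fixed compact set. Away from $\omega$ the statement is classical interior elliptic regularity, so the entire difficulty is concentrated in a flat two-sided neighbourhood of $\omega$, and the argument proceeds by induction on $r$.

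The core is the base step $r=0$, where one must gain one full order on each side, passing from $u_\pm\in H^1$ to $u_\pm\in H^2$ near $\omega$. I would test the localized two-sided weak formulation \eqref{eq:green2s} against second-order tangential difference quotients $v=-\Delta_{-h}^k(\chi^2\,\Delta_h^k u)$ for each tangential direction $k\in\{1,\dots,d-1\}$, where $\Delta_h^k w(x)=\big(w(x+h e_k)-w(x)\big)/h$. Since $\chi^2\,\Delta_h^k u$ is continuous across $\omega$ whenever $u$ is (Lemma \ref{lem:exo-mclean}), this $v$ lies in $H^1(\R^d)$ and is admissible for small $h$. The bilinear form $\langle u,v\rangle_{L^2(\R^d)}+\cE(u,v)$ is coercive: assumption $(\mathbf{E})$ yields $\cE(w,w)=\int (\nabla w)^\ast a\,\nabla w\geq \lambda\|\nabla w\|_{L^2}^2$, so after the discrete integration by parts one controls $\|\chi\,\Delta_h^k u\|_{H^1}$ uniformly in $h$ by $\|f\|_{L^2}$, the tangential smoothness of the coefficients, and the boundary contribution $\big([\cB_\nu u]_\Gamma,\gamma(v)\big)_\Gamma$. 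The latter is precisely where the hypothesis on the conormal jump enters: since tangential difference quotients commute with the trace on the flat piece $\omega$, one transfers $\Delta_{-h}^k$ onto $[\cB_\nu u]_\Gamma$ and bounds this term by $\|[\cB_\nu u]_\Gamma\|_{H^{1/2}(\omega)}\,\|\gamma(\chi^2\,\Delta_h^k u)\|_{H^{1/2}(\omega)}$, uniformly in $h$. Letting $h\to 0$ gives $D_k\nabla u_\pm\in L^2(D^1_\pm)$ for every tangential $k$, i.e. all second derivatives with at most one normal index lie in $L^2$.

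It then remains to recover the purely normal second derivative $D_d^2 u_\pm$. Here I would return to the strong form $A_\pm u_\pm=u_\pm-f_\pm\in L^2(D^2_\pm)$, expand $A_\pm u_\pm=\sum_{i,j}(a_\pm)_{ij}D_{ij}u_\pm+(\text{first order})$, and use that $(a_\pm)_{dd}\geq\lambda>0$ by $(\mathbf{E})$ to solve algebraically for $D_d^2 u_\pm$ in terms of $A_\pm u_\pm$, the already-controlled derivatives $D_{kd}u_\pm$ and $D_{kl}u_\pm$ ($k,l<d$), and the $C^1$ coefficients. This puts $D_d^2 u_\pm\in L^2(D^1_\pm)$, completing $u_\pm\in H^2(D^1_\pm)$. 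For the inductive step $r-1\Rightarrow r$, I would apply the case $r-1$ to the tangential derivatives $w=D_k u$ ($k<d$): since $[D_k u]_\Gamma=D_k[u]_\Gamma=0$, $w$ is again globally $H^1$ and solves a transmission problem $w_\pm-A_\pm w_\pm=\tilde f_\pm$ of the same type, with $\tilde f_\pm=D_k f_\pm+\sum_{i,j}D_i\big((D_k a_\pm)_{ij}D_j u_\pm\big)\in H^{r-1}$, coefficients still in $C^r$, and conormal jump $[\cB_\nu w]_\Gamma\in H^{r-1/2}(\omega)$; the inductive hypothesis gives tangential $H^{r+1}$ bounds, and the normal derivatives are recovered from the differentiated equation as before, raising the regularity to $H^{2+r}$.

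The step I expect to be the main obstacle is the rigorous treatment of the boundary term carrying $[\cB_\nu u]_\Gamma$ in the difference-quotient estimate: one must verify that the tangential difference quotient can be moved onto the conormal jump on $\omega$ with a uniform $H^{1/2}$ bound, and that the duality pairing $(\cdot,\cdot)_\Gamma$ between $H^{-1/2}(\omega)$ and $H^{1/2}(\omega)$ behaves correctly under $\Delta_h^k$. Equally delicate is keeping the two sides genuinely coupled throughout: admissibility of the test function rests on $[\chi^2\,\Delta_h^k u]_\Gamma=0$, which holds only because $u$ is itself continuous across $\omega$, so the continuity condition and the conormal-jump condition must be used in tandem at every order, including in the bookkeeping of the commutator source $\tilde f_\pm$ and of $[\cB_\nu w]_\Gamma$. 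Once these points are secured, the commutation of $A_\pm$ with tangential differences and the coercivity furnished by $(\mathbf{E})$ make the induction routine.
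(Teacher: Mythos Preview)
The paper does not actually prove this proposition: it is stated as a direct citation of Theorem~4.20 in McLean's monograph, with no argument given. Your sketch is therefore not competing against a proof in the paper, but against McLean's own treatment.

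That said, your approach is essentially the one McLean follows: localize and flatten the interface via the $C^{r+2}$ chart, run Nirenberg's tangential difference-quotient argument on the two-sided weak formulation to control all second derivatives with at least one tangential index, recover the purely normal derivative algebraically from the equation using $(a_\pm)_{dd}\geq\lambda$, and induct on $r$ by differentiating tangentially. The points you flag as delicate---transferring $\Delta_h^k$ onto $[\cB_\nu u]_\Gamma$ in the $H^{-1/2}\times H^{1/2}$ pairing, and tracking the new conormal jump $[\cB_\nu(D_k u)]_\Gamma = D_k[\cB_\nu u]_\Gamma - [\nu^*(D_k a_\pm)\nabla u_\pm]_\Gamma$ in the inductive step---are indeed the places where care is needed, and your bookkeeping for them is correct. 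So the proposal is a sound outline of the standard proof; there is simply nothing in the present paper to compare it to.
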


\begin{proposition}[\cite{trudinger}, Theorem 8.10]
\label{prop:GT}
Assume  $(\mathbf{E})$.

Let $r\in \N$. Assume that the coefficients $(a_{\pm})_{ij}$ belong to $C_b^{r+1}(\bar{D}_\pm ; \R)$. Assume $\Gamma$ is bounded.

Let $f_\pm\in H^r(D_\pm)$. Let $u\in H^1(\R^d)$ satisfying 
$$u_\pm-A_\pm u_\pm=f_\pm\quad\text{on } D_\pm.  $$

Let $D'_\pm \subset D_\pm$ open subsets with $\overline{D'_\pm} \subset D_\pm$ and denote $d'_\pm=\mathrm{dist}(D'_\pm,\Gamma)$. 

We have that $u_\pm\in H^{r+2}(D'_\pm)$, with
$$
||u_\pm||_{H^{r+2}(D'_\pm)}\leq C_\pm\big( ||u_\pm||_{H^1(D_\pm)}+||f||_{H^r(D_\pm)}   \big),$$
where the constant $C_\pm$ depends on $d,\lambda, d'_\pm$ and 
$$\max_{1\leq i,j\leq d}\max_{|\alpha|\leq r+1}\sup_{x\in D_\pm}|\partial^\alpha (a_{\pm})_{ij}(x)|.$$
\end{proposition}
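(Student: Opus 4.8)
Proposition \ref{prop:GT} is the classical interior regularity theorem for divergence-form elliptic equations, and the plan is to reproduce its proof by the difference-quotient (Nirenberg) method. Everything is interior to a single subdomain, so I fix one sign and drop the $\pm$ throughout, writing $D$, $a$, $\cE$, $A$ for $D_\pm$, $a_\pm$, $\cE_\pm$, $A_\pm$; the argument is identical on the two sides. From the hypothesis $u-Au=f$ on $D$ together with \eqref{eq:lienAEp} (which reads $\cE(u,v)=\int_D(-Au)v$ for $v\in H^1_0(D)$) one obtains the weak formulation
$$\langle u,v\rangle_{L^2(D)}+\cE(u,v)=\langle f,v\rangle_{L^2(D)},\qquad\forall v\in H^1_0(D).$$
Because differentiating the equation forces a right-hand side in divergence form, I will in fact prove the estimate for the more general equation $u-Au=g+\sum_{i=1}^d D_i h^i$ on $D$ with $g\in H^r(D)$ and $h=(h^i)\in (H^{r+1}(D))^d$; the stated case is $g=f$, $h=0$. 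The whole estimate will be proved by induction on $r$.

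\textbf{Base case $r=0$.} Fix a cutoff $\zeta\in C^\infty_c(D)$ with $\zeta\equiv 1$ on $D'$ and $|\nabla\zeta|\leq C/d'$. For a coordinate direction $e_k$ and $0<|h|<\tfrac12\,\mathrm{dist}(\mathrm{supp}\,\zeta,\partial D)$ write $\Delta^h_k v(x)=h^{-1}\pare{v(x+he_k)-v(x)}$, and test the weak formulation against $v=-\Delta^{-h}_k(\zeta^2\Delta^h_k u)\in H^1_0(D)$. Using the discrete integration-by-parts identity $\int \phi\,\Delta^h_k\psi=-\int(\Delta^{-h}_k\phi)\psi$ and the discrete Leibniz rule, the principal part of $\cE(u,v)$ produces the term $\int a_{ij}\,\zeta^2\,(\Delta^h_k D_ju)(\Delta^h_k D_iu)$, which is bounded below by $\lambda\int\zeta^2|\Delta^h_k\nabla u|^2$ thanks to $(\mathbf{E})$. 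All remaining contributions carry either a factor $\nabla\zeta$ or a factor $\Delta^h_k a_{ij}$ (bounded by $||a||_{C^1}$), and are controlled by Cauchy--Schwarz and absorbed into the principal term via Young's inequality. This yields a bound on $||\zeta\,\Delta^h_k\nabla u||_{L^2(D)}$ uniform in $h$, of the form $C(||\nabla u||_{L^2(D)}+||u||_{L^2(D)}+||f||_{L^2(D)})$. The difference-quotient characterization of $H^1$ then gives $\partial_k\nabla u\in L^2(D')$ for every $k$, i.e.\ $u\in H^2(D')$, with the asserted estimate (for $r=0$ the constant depends on $\lambda$, $d$, $d'$ and $||a||_{C^1}$).

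\textbf{Inductive step.} Assume the generalized estimate holds through order $r-1$ on every interior subdomain, and suppose $f\in H^r(D)$, $a\in C^{r+1}_b(\bar D)$. Choosing $D'\subset\subset D''\subset\subset D$, the inductive hypothesis applied on $D''$ gives $u\in H^{r+1}(D'')$. Differentiating the weak equation in direction $k$ shows that $w=D_ku\in H^r(D'')$ solves, weakly on $D''$,
$$\langle w,v\rangle+\cE(w,v)=\langle D_kf,v\rangle+\sum_{i,j=1}^d\int_{D''}(D_k a_{ij})\,D_ju\,D_iv,$$
that is, $w-Aw=D_kf+\sum_iD_i h^i$ with $h^i=-\sum_j(D_k a_{ij})D_ju$. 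Here $D_kf\in H^{r-1}(D'')$ and, since $D_k a_{ij}\in C^r$ and $D_ju\in H^r(D'')$, the Sobolev product rule gives $h^i\in H^r(D'')$. Thus $w$ solves an equation of the generalized form at order $r-1$, so the inductive hypothesis yields $w\in H^{r+1}(D')$ with the matching estimate; ranging over $k$ gives $u\in H^{r+2}(D')$. Tracking the cutoff constants and the $C^{r+1}_b$-norm of $a$ through the iteration produces the stated dependence of $C_\pm$.

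\textbf{Main obstacle.} The delicate point is the base-case estimate: selecting the test function $v=-\Delta^{-h}_k(\zeta^2\Delta^h_k u)$, applying the discrete integration-by-parts and Leibniz identities without sign or translation-index slips, and---crucially---using $(\mathbf{E})$ together with Young's inequality to absorb \emph{every} term carrying a factor $\Delta^h_k\nabla u$ (those coming from $\nabla\zeta$ and from $\Delta^h_k a$) into the good term $\lambda\int\zeta^2|\Delta^h_k\nabla u|^2$, so that the resulting bound is genuinely independent of $h$. A secondary technicality is making the differentiated weak formulation of the inductive step rigorous---cleanest by writing the equation for the difference quotient $\Delta^h_k u$ and passing to the limit using the already-established $H^{r+1}_{\mathrm{loc}}$ regularity---and checking that the product $(D_ka_{ij})D_ju$ indeed lands in $H^r$.
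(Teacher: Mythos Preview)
Your approach is substantively different from the paper's. The paper does not re-prove interior regularity at all: it simply cites Theorem 8.10 of Gilbarg--Trudinger as a black box and devotes its few lines of proof to a single adaptation --- namely, that the cited theorem is stated for $\overline{D'_\pm}$ compact, whereas here $D_-$ is unbounded and $\overline{D'_-}$ need not be compact. The paper's point is that the Gilbarg--Trudinger proof only uses the distance $d'_-=\mathrm{dist}(D'_-,\Gamma)$, so the non-compact case follows unchanged. You instead reproduce the full Nirenberg difference-quotient argument from scratch, which is correct and self-contained but considerably longer.

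There is one place where the two approaches intersect and where your write-up has a small slip: you take $\zeta\in C^\infty_c(D)$ with $\zeta\equiv 1$ on $D'$, but for the exterior domain $D_-$ the set $D'_-$ can be unbounded, and then no compactly supported cutoff can be identically one on it. This is precisely the issue the paper's proof singles out. The fix is immediate --- take $\zeta\in C^\infty_b(\R^d)$ with $\zeta\equiv 1$ on $D'$, $\zeta\equiv 0$ on a neighbourhood of $\Gamma$, and $|\nabla\zeta|\leq C/d'$ (such a $\zeta$ exists because $\Gamma$ is bounded and $d'>0$); the test function $v=-\Delta^{-h}_k(\zeta^2\Delta^h_k u)$ then still lies in $H^1_0(D_-)$ since it vanishes near $\Gamma$ and belongs to $H^1(D_-)$. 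With that adjustment your argument goes through, and in fact it \emph{is} the computation that justifies the paper's one-line claim that ``only the distance $d'_-$ plays a role''.
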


\begin{proof}
In \cite{trudinger} this result is asserted with the assumption that $\overline{D'_\pm} \subset D_\pm$, with 
$\overline{D'_\pm}$ compact. 
So that for the interior (bounded) domain $D_+$ the result is immediate.  On the unbounded domain $D_-$
we claim that the same result holds for non compact $\overline{D'_-}$, as in fact only
the distance $d'_-=\mathrm{dist}(D'_-,\Gamma)$ plays a role in the proof. 
\end{proof}

Thus, covering $\Gamma$ with open balls in order to use the local result of Proposition~\ref{prop:reg-sol-faible}, and combining with the global result of 
Proposition \ref{prop:GT}, it is possible to show the following theorem, that will be used extensively in the sequel.

\begin{theorem}
\label{thm:reg-sol-faible}

Assume $(\mathbf{E})$.

Let $r\in \N$. Assume that the coefficients $(a_{\pm})_{ij}$ belong to $C_b^{r+1}(\bar{D}_\pm ; \R)$. Assume $\Gamma$ is bounded and of class 
$C^{r+2}$.

Let $f_\pm\in H^r(D_\pm)$. Let $u\in H^1(\R^d)$ satisfying 
$$u_\pm-A_\pm u_\pm=f_\pm\quad\text{on } D_\pm  $$
and 
$\big[ \cB_\nu u \big]_\Gamma\in H^{\frac 1 2+r}(\Gamma)$.
Then $u_\pm\in H^{2+r}(D_\pm)$.
\end{theorem}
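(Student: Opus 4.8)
The strategy is a standard patching argument that splices together the local interface estimate of Proposition~\ref{prop:reg-sol-faible} near $\Gamma$ with the interior estimate of Proposition~\ref{prop:GT} away from $\Gamma$. First I would fix a point $x_0\in\Gamma$ and, using the assumption that $\Gamma$ is bounded and of class $C^{r+2}$, produce two concentric open sets $G_1(x_0)\subset\subset G_2(x_0)$ as in the hypotheses of Proposition~\ref{prop:reg-sol-faible}: $G_2(x_0)$ is chosen small enough that the piece $\Gamma_2=\Gamma\cap G_2(x_0)$ is mapped to a flat portion of $\{x_d=0\}$ by a $C^{r+2}$-diffeomorphism (possible because $\Gamma$ has exactly that regularity), and such that the coefficients $(a_\pm)_{ij}\in C_b^{r+1}(\bar D_\pm)$ restrict to $C^{r+1}(\overline{D^2_\pm})$ functions. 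Since $u\in H^1(\R^d)$ we have $u\in H^1(G_2(x_0))$, the equations $u_\pm-A_\pm u_\pm=f_\pm$ hold on $D^2_\pm\subset D_\pm$, the data satisfy $f_\pm\in H^r(D_\pm)\subset H^r(D^2_\pm)$, and by hypothesis $[\cB_\nu u]_\Gamma\in H^{1/2+r}(\Gamma)$ restricts to $H^{1/2+r}(\Gamma_2)$. Proposition~\ref{prop:reg-sol-faible} then gives $u_\pm\in H^{2+r}(D^1_\pm(x_0))$, i.e.\ $u$ is $H^{2+r}$ on a neighbourhood of $x_0$ intersected with each side.

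Next I would cover the compact set $\Gamma$ by finitely many such inner sets $G_1(x_0^{(1)}),\dots,G_1(x_0^{(N)})$, so that $\Gamma\subset\mathcal O:=\bigcup_{\ell=1}^N G_1(x_0^{(\ell)})$, an open neighbourhood of $\Gamma$, and on $\mathcal O\cap D_\pm$ we have $u_\pm\in H^{2+r}$ (locally, hence on the finite union up to shrinking slightly to keep things bounded on the $D_+$ side — on the $D_-$ side one only needs the sets near $\Gamma$). It remains to handle the part of $D_\pm$ away from $\Gamma$. Take $D'_\pm:=D_\pm\setminus\overline{\mathcal O'}$ for a slightly smaller neighbourhood $\mathcal O'$ of $\Gamma$ with $\overline{\mathcal O'}\subset\mathcal O$; then $\mathrm{dist}(D'_\pm,\Gamma)=:d'_\pm>0$, and Proposition~\ref{prop:GT} (whose proof, as noted in the excerpt, works for the unbounded $D_-$ as well since only $d'_\pm$ enters) yields $u_\pm\in H^{r+2}(D'_\pm)$ with the stated bound, using again $f_\pm\in H^r(D_\pm)$ and $(a_\pm)_{ij}\in C_b^{r+1}(\bar D_\pm)$. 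Finally I would glue: $D_\pm=D'_\pm\cup(\mathcal O\cap D_\pm)$, and since $u_\pm$ lies in $H^{2+r}$ on each of the finitely many pieces in this cover, it lies in $H^{2+r}(D_\pm)$ — for the unbounded side one checks that the $H^{2+r}$ norm is controlled by the finite sum of the local contributions, the two propositions providing uniform constants because the coefficient bounds $\max_{|\alpha|\le r+1}\sup|\partial^\alpha(a_\pm)_{ij}|$ are global.

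The genuinely delicate point is the bookkeeping on the \emph{unbounded} exterior domain $D_-$: Proposition~\ref{prop:reg-sol-faible} is purely local and Proposition~\ref{prop:GT} has been extended to non-compact $\overline{D'_-}$, but one must be careful that the finite cover of $\Gamma$ plus the single ``far'' piece $D'_-$ genuinely exhausts $D_-$ and that summing finitely many local $H^{2+r}$ bounds really gives membership in $H^{2+r}(D_-)$ without a hidden dependence on how far out one is — this works precisely because the estimate in Proposition~\ref{prop:GT} depends only on $d,\lambda,d'_-$ and the global sup-norms of derivatives of $a_-$, not on the (infinite) volume of $D_-$. Everything else — choosing the diffeomorphism charts, verifying the restricted equations and data still satisfy the hypotheses, the partition of the domain — is routine. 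I would not belabour the covering combinatorics; the substance is the two cited propositions, and the theorem is essentially their conjunction after a localization argument.
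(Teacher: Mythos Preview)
Your proposal is correct and follows exactly the approach the paper indicates: the paper's own proof is a one-sentence sketch stating that one covers $\Gamma$ with open balls to apply the local result of Proposition~\ref{prop:reg-sol-faible} and combines this with the global interior result of Proposition~\ref{prop:GT}. You have faithfully expanded this sketch into a detailed patching argument, including the care needed on the unbounded exterior domain $D_-$.
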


%

\subsubsection{Existence of a weak solution to the resolvent equation and immediate properties of functions in $\cD(A^k)$, $k\geq 1$}

We have the next result.

\begin{proposition}
\label{prop:sol-faible-ell}
Assume  $(\mathbf{E})$.
Let $f\in L^2(\R^d)$. Then \eqref{eq:resol} has a unique solution in $\cD(A)$.
\end{proposition}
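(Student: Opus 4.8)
The plan is to solve \eqref{eq:resol} variationally via the Lax--Milgram theorem applied to the bilinear form $b(u,v) = \langle u,v\rangle_{L^2(\R^d)} + \cE(u,v)$ on $H^1(\R^d)$, and then to upgrade the resulting weak solution to an element of $\cD(A)$. First I would check that $b$ is a continuous bilinear form on $H^1(\R^d)\times H^1(\R^d)$: bilinearity is clear, and continuity follows from the Cauchy--Schwarz inequality together with the bound $|a_{ij}|\leq \Lambda$, which gives $|b(u,v)| \leq C\,\|u\|_{H^1(\R^d)}\|v\|_{H^1(\R^d)}$ for a constant $C$ depending on $d$ and $\Lambda$. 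Second, I would establish coercivity: using the ellipticity assumption $(\mathbf{E})$, for any $u\in H^1(\R^d)$ one has $\cE(u,u) = \sum_{i,j}\langle a_{ij}D_ju,D_iu\rangle_{L^2} \geq \lambda\sum_i\|D_iu\|_{L^2(\R^d)}^2$ (applying \eqref{eq:ell} pointwise with $\xi = \nabla u(x)$ and integrating), hence $b(u,u) \geq \|u\|_{L^2(\R^d)}^2 + \lambda\|\nabla u\|_{L^2(\R^d)}^2 \geq \min(1,\lambda)\,\|u\|_{H^1(\R^d)}^2$. Third, since $v\mapsto \langle f,v\rangle_{L^2(\R^d)}$ is a continuous linear functional on $H^1(\R^d)$ (again by Cauchy--Schwarz), Lax--Milgram yields a unique $u\in H^1(\R^d)$ with
$$
\langle u,v\rangle_{L^2(\R^d)} + \cE(u,v) = \langle f,v\rangle_{L^2(\R^d)}, \qquad \forall v\in H^1(\R^d).
$$

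It then remains to show this $u$ lies in $\cD(A)$ and satisfies $u - Au = f$. Taking test functions $v = \varphi \in C^\infty_c(\R^d;\R)$ and unravelling the definition of $\cE$, the identity above says precisely that $\sum_{i,j=1}^d D_i(a_{ij}D_j u) = u - f$ in the distributional sense; since $u\in L^2(\R^d)$ and $f\in L^2(\R^d)$, the right-hand side is in $L^2(\R^d)$, so $\sum_{i,j}D_i(a_{ij}D_ju)\in L^2(\R^d)$, which together with $u\in H^1(\R^d)$ is exactly the condition defining $\cD(A)$. Hence $u\in\cD(A)$, $Au = u - f$, i.e. $u - Au = f$. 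Conversely, uniqueness in $\cD(A)$: if $u_1,u_2\in\cD(A)$ both solve \eqref{eq:resol}, their difference $w\in\cD(A)$ satisfies $w - Aw = 0$; testing against $w$ itself via the relation \eqref{eq:lienAE} (valid since $w\in\cD(A)$) gives $\|w\|_{L^2(\R^d)}^2 + \cE(w,w) = \langle w - Aw, w\rangle_{L^2(\R^d)} = 0$, and coercivity forces $w = 0$.

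The only mildly delicate point is the passage between the distributional identity and membership in $\cD(A)$, which here is essentially a matter of matching definitions rather than a genuine obstacle; everything else is a routine Lax--Milgram argument. I would make sure to invoke the density of $C^\infty_c(\R^d;\R)$ in $H^1(\R^d) = H^1_0(\R^d)$ both to identify the distributional equation from the test against compactly supported functions and to recover the full variational identity \eqref{eq:lienAE}, exactly as indicated in the discussion preceding the statement.
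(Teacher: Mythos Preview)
Your proposal is correct and follows essentially the same approach as the paper: apply Lax--Milgram to the bilinear form $\langle u,v\rangle_{L^2(\R^d)}+\cE(u,v)$ on $H^1(\R^d)$, then test against $\varphi\in C^\infty_c(\R^d;\R)$ to see that $\sum_{i,j}D_i(a_{ij}D_ju)\in L^2(\R^d)$, hence $u\in\cD(A)$ and $u-Au=f$. Your explicit uniqueness argument in $\cD(A)$ is a small addition (the paper relies implicitly on the Lax--Milgram uniqueness in $H^1(\R^d)\supset\cD(A)$), but the route is the same.
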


\begin{proof}
Let us note that the symmetric bilinear form on $H^1(\R^d)$
\begin{equation*}
(u,v)\mapsto\langle u,v\rangle_{L^2(\R^d)}+\cE(u,v)
\end{equation*}
is continuous and, thanks to Assumption {\bf (E)}, coercive.
Thus the Lax-Milgram theorem (\cite{brezis} Corollary V.8) immediately asserts the existence of a unique
 $u\in H^1(\R^d)$ such that
 \begin{equation*}
 \forall v\in H^1(\R^d),\quad \langle u,v\rangle_{L^2(\R^d)}+\cE(u,v)=\langle f,v\rangle_{L^2(\R^d)}.
 \end{equation*}
In other words we have for any $\varphi\in C^\infty_c(\R^d;\R)$,
$$
\cE(u,\varphi)=-\langle \sum_{i,j=1}^dD_i(a_{ij}D_ju),\varphi\rangle_{H^{-1}(\R^d),H^1(\R^d)}=\langle (f-u),\varphi\rangle_{L^2(\R^d)}.$$
Hence the distribution $\sum_{i,j=1}^dD_i(a_{ij}D_ju)$ belongs to $L^2(\R^d)$, and thus~$u\in\cD(A)$. Finally, from the above relations we deduce
$$
 \forall v\in H^1(\R^d),\quad \langle u-Au,v\rangle_{L^2(\R^d)}=\langle f,v\rangle_{L^2(\R^d)},
$$
which implies \eqref{eq:resol}.
\end{proof}

The proposition below gives properties of functions belonging to $\cD(A)$. It indicates that the solution $u\in\cD(A)$ of \eqref{eq:resol}
encountered in Proposition \ref{prop:sol-faible-ell} satisfies a continuity property and a transmission condition in a weak sense at the interface.


\begin{proposition}
\label{prop-sautzero}
Let $u\in \cD(A)$. Then $[u]_\Gamma=\big[ \cB_\nu u \big]_\Gamma=0$ a.e. on $\Gamma$.
\end{proposition}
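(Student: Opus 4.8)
The plan is to combine the two-sided Green identity of Lemma~\ref{lem:green3} with the identity \eqref{eq:lienAE} relating $A$ to the bilinear form $\cE$. Throughout one works, as in this whole subsection, with $\Gamma$ bounded and $C^2$, which is what makes the one-sided co-normal derivatives $\cB^\pm_\nu u$ meaningful.

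The equality $[u]_\Gamma=0$ I would obtain at once: since $u\in\cD(A)\subset H^1(\R^d)$, Lemma~\ref{lem:restri} gives $u_\pm\in H^1(D_\pm)$, and then Lemma~\ref{lem:exo-mclean} forces $[u]_\Gamma=0$ a.e.\ on $\Gamma$.

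For the co-normal jump, I would set $f:=u-Au\in L^2(\R^d)$ and $f_\pm:=f|_{D_\pm}$, and first check that $u_\pm-A_\pm u_\pm=f_\pm$ on $D_\pm$ in the sense of Remark~\ref{rem:restri}. For $\varphi\in C^\infty_c(D_\pm;\R)$, Lemma~\ref{lem:restri} (used to identify $(D_ju)|_{D_\pm}$ with $D_j(u_\pm)$ in the expression of $Au$) gives $\langle A_\pm u_\pm,\varphi\rangle=\langle Au,\varphi\rangle=\langle u-f,\varphi\rangle$, whence $A_\pm u_\pm=(Au)|_{D_\pm}\in L^2(D_\pm)$ and $u_\pm-A_\pm u_\pm=f_\pm$. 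By Remark~\ref{rem:restri} the element $\big[\cB_\nu u\big]_\Gamma\in H^{-1/2}(\Gamma)$ is then well defined, and Lemma~\ref{lem:green3} applies with this $f$, giving
\[
\langle u,v\rangle_{L^2(\R^d)}+\cE(u,v)=\langle f,v\rangle_{L^2(\R^d)}-\Big(\big[\cB_\nu u\big]_\Gamma,\gamma(v)\Big)_\Gamma,\qquad\forall v\in H^1(\R^d).
\]
On the other hand \eqref{eq:lienAE} yields $\cE(u,v)=\langle -Au,v\rangle_{L^2(\R^d)}$, so the left-hand side is $\langle u-Au,v\rangle_{L^2(\R^d)}=\langle f,v\rangle_{L^2(\R^d)}$; subtracting, $\big(\big[\cB_\nu u\big]_\Gamma,\gamma(v)\big)_\Gamma=0$ for every $v\in H^1(\R^d)$.

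To finish, I would invoke surjectivity of the trace $\gamma:H^1(\R^d)\to H^{1/2}(\Gamma)$: any $g\in H^{1/2}(\Gamma)$ is $\gamma(w)$ for some $w\in H^1(\R^d)$, built by gluing $H^1$-extensions of $g$ into $D_+$ and into $D_-$ (their jump across $\Gamma$ vanishes, so Lemma~\ref{lem:exo-mclean} places the glued function in $H^1(\R^d)$). Hence $\big[\cB_\nu u\big]_\Gamma$ annihilates all of $H^{1/2}(\Gamma)$, i.e.\ $\big[\cB_\nu u\big]_\Gamma=0$ in $H^{-1/2}(\Gamma)$, which is the asserted vanishing on $\Gamma$. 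The only slightly delicate step is the identification $A_\pm u_\pm=(Au)|_{D_\pm}$ that makes the hypotheses of Lemma~\ref{lem:green3} available; everything else is a direct assembly of results already proved, so I do not anticipate any real obstacle.
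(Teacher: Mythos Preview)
Your proof is correct and follows essentially the same route as the paper: both establish $[u]_\Gamma=0$ via Lemma~\ref{lem:exo-mclean}, then set $f=u-Au$, verify $u_\pm-A_\pm u_\pm=f_\pm$ by testing against $C^\infty_c(D_\pm)$ functions (the paper does this through $\cE_+$ and \eqref{eq:lienAEp}, you do it directly via the distributional identity, which amounts to the same thing), apply Lemma~\ref{lem:green3}, compare with \eqref{eq:lienAE}, and conclude by trace surjectivity. Your added remark on how to realise the surjectivity of $\gamma:H^1(\R^d)\to H^{1/2}(\Gamma)$ by gluing one-sided extensions is a nice clarification that the paper leaves implicit.
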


\begin{proof}
Let $u\in\cD(A)$. As $u\in H^1(\R^d)$ one gets by Lemma \ref{lem:exo-mclean} that $[u]_\Gamma=0$ a.e. on $\Gamma$. Set now $f=u-Au\in L^2(\R^d)$. According to Equation \eqref{eq:lienAE} we have
\begin{equation}
 \label{eq:resol-faible}
 \forall v\in H^1(\R^d),\quad \langle u,v\rangle_{L^2(\R^d)}+\cE(u,v)=\langle f,v\rangle_{L^2(\R^d)},
 \end{equation}
 and this in true in particular for any $v\in C^\infty_c(D_+;\R)$. 
 But using Lemma \ref{lem:restri} one has for any $v\in C^\infty_c(D_+;\R)$, that
 $$\cE(u,v)=\sum_{i,j=1}^d\int_{D_+}(a_+)_{ij}(D_ju)_+D_iv=\sum_{i,j=1}^d\int_{D_+}(a_+)_{ij}(D_ju_+)D_iv=\cE_+(u_+,v).$$
 Using now \eqref{eq:lienAEp} we see that 
 $ \langle u_+-A_+u_+,v\rangle_{L^2(D_+)}=\langle f_+,v\rangle_{L^2(D_+)}$ for any $v\in C^\infty_c(D_+;\R)$.
 Proceeding in the same manner on $D_-$ we finally see that $u_\pm-A_\pm u_\pm=f_\pm$ on $D_\pm$.

 Note that by construction $f=f_++f_-$. Using now Lemma \ref{lem:green3}, and comparing \eqref{eq:green2s} and \eqref{eq:resol-faible}, one gets 
 $\big( \big[ \cB_\nu u \big]_\Gamma, \gamma(v) \big)_\Gamma=0$ for any $v\in H^1(\R^d)$. Using the fact that the trace operator is surjective, this implies that  
 $\big( \big[ \cB_\nu u \big]_\Gamma, w \big)_\Gamma=0$ for any $w\in H^{1/2}(\Gamma)$, which completes the proof.
\end{proof}

Thanks to Theorem \ref{thm:reg-sol-faible} we can get as a corollary the following result concerning the iterated domains $\cD(A^k)$, $k\in\N^*$.

\begin{corollary}
\label{cor:regDA} Assume  $(\mathbf{E})$.
Let $k\in \N^*$ and $u\in \cD(A^k)$. Assume that the coefficients $(a_\pm)_{ij}\in C_b^{2k-1}(D_\pm)$ and that 
$\Gamma$ is bounded and of class $C^{2k}$. Then $u_\pm\in H^{2k}(D_\pm)$.
\end{corollary}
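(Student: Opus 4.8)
The plan is to prove Corollary \ref{cor:regDA} by induction on $k$, using Theorem \ref{thm:reg-sol-faible} as the workhorse at each step. The base case $k=1$ is essentially immediate: if $u\in\cD(A)$, set $f=u-Au\in L^2(\R^d)$. As shown in the proof of Proposition \ref{prop-sautzero}, one has $u_\pm-A_\pm u_\pm=f_\pm$ on $D_\pm$ with $f_\pm\in L^2(D_\pm)=H^0(D_\pm)$, and by Proposition \ref{prop-sautzero} the jump $[\cB_\nu u]_\Gamma=0\in H^{1/2}(\Gamma)=H^{1/2+0}(\Gamma)$. Applying Theorem \ref{thm:reg-sol-faible} with $r=0$ (the hypotheses $(a_\pm)_{ij}\in C_b^1(\bar D_\pm)$ and $\Gamma$ bounded of class $C^2$ being exactly the $k=1$ instances of the stated assumptions) yields $u_\pm\in H^2(D_\pm)$.

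For the inductive step, assume the statement holds for $k-1$ and let $u\in\cD(A^k)$. Then $v:=Au\in\cD(A^{k-1})$, and also $u\in\cD(A^{k-1})$. First I would apply the induction hypothesis to $v$: since $(a_\pm)_{ij}\in C_b^{2k-1}(D_\pm)\subset C_b^{2(k-1)-1}(D_\pm)$ and $\Gamma$ is of class $C^{2k}$, hence of class $C^{2(k-1)}$, we get $v_\pm=(Au)_\pm\in H^{2k-2}(D_\pm)$. Now set $f=u-Au\in L^2(\R^d)$; we have $f_\pm=u_\pm-(Au)_\pm$. By the induction hypothesis applied to $u\in\cD(A^{k-1})$ we have $u_\pm\in H^{2k-2}(D_\pm)$, so $f_\pm\in H^{2k-2}(D_\pm)=H^{r}(D_\pm)$ with $r=2k-2$. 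Moreover $u_\pm-A_\pm u_\pm=f_\pm$ on $D_\pm$ (same argument as in Proposition \ref{prop-sautzero}), and by Proposition \ref{prop-sautzero} again $[\cB_\nu u]_\Gamma=0\in H^{1/2+r}(\Gamma)$. The remaining smoothness hypotheses of Theorem \ref{thm:reg-sol-faible} for this value of $r$ read $(a_\pm)_{ij}\in C_b^{r+1}(\bar D_\pm)=C_b^{2k-1}(\bar D_\pm)$ and $\Gamma$ bounded of class $C^{r+2}=C^{2k}$, which are precisely our assumptions. Theorem \ref{thm:reg-sol-faible} then gives $u_\pm\in H^{r+2}(D_\pm)=H^{2k}(D_\pm)$, closing the induction.

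The one point that needs slight care — and which I would expect to be the main (minor) obstacle — is the bookkeeping of the regularity thresholds: one must check that the coefficient and interface assumptions stated for the target index $k$ are strong enough to support \emph{both} the application of Theorem \ref{thm:reg-sol-faible} at level $r=2k-2$ \emph{and} the application of the induction hypothesis at level $k-1$ (which needs $(a_\pm)_{ij}\in C_b^{2k-3}(D_\pm)$ and $\Gamma$ of class $C^{2k-2}$). Since $2k-1\geq 2k-3$ and $2k\geq 2k-2$, the stronger hypotheses at level $k$ comfortably imply those needed at level $k-1$, so the induction is self-consistent. One should also note that, to invoke Proposition \ref{prop-sautzero} and the identities $u_\pm-A_\pm u_\pm=f_\pm$, it suffices that $u\in\cD(A)$, which holds since $\cD(A^k)\subset\cD(A)$; no extra regularity of $u$ beyond membership in $H^1(\R^d)$ is needed at that stage, the elliptic regularity being entirely bootstrapped through Theorem \ref{thm:reg-sol-faible}.
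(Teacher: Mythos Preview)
Your proof is correct and follows essentially the same approach as the paper: induction on $k$, with Theorem \ref{thm:reg-sol-faible} applied at level $r=2k-2$ once the induction hypothesis (applied to both $u$ and $Au$ in $\cD(A^{k-1})$) provides $f_\pm\in H^{2k-2}(D_\pm)$, and Proposition \ref{prop-sautzero} supplies the vanishing co-normal jump. Your treatment is actually slightly more careful than the paper's in spelling out the bookkeeping --- applying the induction hypothesis separately to $u$ and to $Au$, and verifying that the level-$k$ assumptions on $a_\pm$ and $\Gamma$ dominate those needed at level $k-1$.
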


\begin{proof}
The proof proceeds by induction on $k$. 

\vspace{0.1cm}

Let $u\in \cD(A)$ (case $k=1$). We have $\big[ \cB_\nu u \big]_\Gamma=0$, according to Proposition \ref{prop-sautzero}. Thus in particular
$\big[ \cB_\nu u \big]_\Gamma\in H^{\frac 1 2}(\Gamma)$. As in the proof of Proposition \ref{prop-sautzero} we set $f=u-Au$ and notice that we have
$u_\pm-A_\pm u_\pm=f_\pm$ on $D_\pm$, with $f_\pm\in L^2(D_\pm)$.

Using Theorem \ref{thm:reg-sol-faible} - remember that $u$ is in $H^1(\R^d)$, $(a_\pm)_{ij}\in C_b^{1}(\bar{D}_\pm;\R)$ and~$\Gamma$ is bounded of class~$C^2$~- we get that $u_\pm\in H^2(D_\pm)$.

\vspace{0.1cm}
Suppose now that the result is true at rank $k-1$ we prove its validity at rank $k$ ($k\geq 2$). Let~$u\in \cD(A^k)$. As $u\in \cD(A)$ we have
$\big[ \cB_\nu u \big]_\Gamma=0\in H^{2k-\frac 3 2}(\Gamma)$. As  $Au\in \cD(A^{k-1})$ the quantity $u-Au=:f$ satisfies
$f_\pm\in H^{2k-2}(D_\pm)$, using the induction hypothesis. But as we have $u_\pm-A_\pm u_\pm=f_\pm$ on $D_\pm$, one may use again the smoothness of $(a_\pm)_{ij}$ and $\Gamma$ and Theorem \ref{thm:reg-sol-faible} in order to conclude that $u_\pm\in H^{2k}(D_\pm)$. 
\end{proof}

\subsection{The solution of the parabolic problem $(\mathcal{P}_\mathrm{T})$}

\subsubsection{Application of the Hille-Yosida theorem}
We now use the Hille-Yosida theorem (\cite{brezis} Theorems VII.4 and VII.5) in order to prove the following proposition. Note that in Equation 
\eqref{eq:dudtfaible} below, the time derivative is understood in the strong sense, while the space derivatives are understood in the weak sense.
Besides, by convention $\cD(A^0)=L^2(\R^d)$.

\begin{proposition}
\label{prop:hille}
Assume  $(\mathbf{E})$.
Let $u_0\in \cD(A)$.
Then there exists a unique function 
$$
u\in C^{1}\big([0,T];\,  L^2(\R^d) \big)\cap C\big([0,T];\, \cD(A)  \big)
$$
satisfying
\begin{equation}
\label{eq:dudtfaible}
\frac{\mathrm{d}u}{\mathrm{d}t}=Au,\quad\quad u(0)=u_0.
\end{equation}
Furthermore, let $u_0\in \cD(A^k)$, $k\geq 2$. Then, $$
u\in C^{k-j}\big([0,T];\, \cD(A^j)  \big),\quad 0\leq j\leq k.$$
\end{proposition}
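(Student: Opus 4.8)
The plan is to apply the Hille--Yosida theorem to the operator $A$ on the Hilbert space $H=L^2(\R^d)$. The first step is to verify the hypotheses of the version of Hille--Yosida for maximal monotone (or $m$-dissipative) operators stated in Brezis. Concretely, I would check that (i) $\cD(A)$ is dense in $L^2(\R^d)$ — this is clear since $\cD(A)$ contains $C^\infty_c(\R^d;\R)$; (ii) $A$ is closed — this follows from the definition of $\cD(A)$ via the distributional identity $Au=\sum_{i,j}D_i(a_{ij}D_ju)$ together with the continuity of differentiation in the sense of distributions; (iii) $-A$ is monotone, i.e. $\langle -Au,u\rangle_{L^2(\R^d)}=\cE(u,u)\geq 0$ for all $u\in\cD(A)$, which is exactly relation \eqref{eq:lienAE} combined with Assumption $(\mathbf E)$; and (iv) $I-A$ (or more generally $I+\lambda A$ for $\lambda>0$, equivalently $I-A$ after rescaling) is surjective onto $L^2(\R^d)$, which is precisely the content of Proposition \ref{prop:sol-faible-ell}. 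Hence $A$ generates a strongly continuous semigroup of contractions $(S_t)_{t\geq 0}$ on $L^2(\R^d)$ (indeed $\cE$ is symmetric, so the semigroup is analytic, but we only need the contraction-semigroup statement).

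Given this, for $u_0\in\cD(A)$ the classical Hille--Yosida regularity statement (Brezis, Theorem VII.5) yields that $u(t):=S_tu_0$ belongs to $C^1\big([0,T];L^2(\R^d)\big)\cap C\big([0,T];\cD(A)\big)$, where $\cD(A)$ is equipped with the graph norm, and satisfies \eqref{eq:dudtfaible}; uniqueness is also part of that theorem (two solutions have difference with vanishing $L^2$ norm by monotonicity, or directly by the semigroup characterization). This disposes of the first assertion.

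For the second assertion I would argue by induction on $k\geq 2$, using the commutation $S_tA=AS_t$ on $\cD(A)$. The base case $k=1$ is the statement just proved. For the inductive step, suppose the claim holds for $k-1$ and let $u_0\in\cD(A^k)$. Then $u_0\in\cD(A)$, so $u\in C^1\big([0,T];L^2\big)\cap C\big([0,T];\cD(A)\big)$, and moreover $Au_0\in\cD(A^{k-1})$. The function $t\mapsto Au(t)=S_t(Au_0)$ is, by the induction hypothesis applied with initial datum $Au_0$, in $C^{k-1-j}\big([0,T];\cD(A^j)\big)$ for $0\leq j\leq k-1$. Since $\tfrac{\mathrm d u}{\mathrm dt}=Au$, this shows $u\in C^{k-j}\big([0,T];\cD(A^j)\big)$ for $1\leq j\leq k-1$ (gaining one time derivative from the ODE), and also $u\in C^{k-1}\big([0,T];\cD(A^{k-1})\big)$ directly; to reach $j=k$ one notes that $u(t)\in\cD(A^k)$ for all $t$ because $Au(t)=S_t(Au_0)\in C\big([0,T];\cD(A^{k-1})\big)$, so $Au(t)\in\cD(A^{k-1})$, i.e. $u(t)\in\cD(A^k)$, and continuity in the $\cD(A^k)$ graph norm follows from continuity of $t\mapsto A^k u(t)=S_t(A^ku_0)$ together with continuity of $t\mapsto u(t)$ in $L^2$.

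The main obstacle, such as it is, lies in bookkeeping the induction cleanly: one must carefully track both the number of available time derivatives and the spatial (graph-norm) regularity, and verify at each stage that the identity $\tfrac{\mathrm d u}{\mathrm dt}=Au$ can be differentiated in time the requisite number of times — this uses that $t\mapsto Au(t)$ is itself a solution of the same abstract Cauchy problem with datum $Au_0$, so no new analytic input is needed beyond the $k=1$ case and the surjectivity/monotonicity already established. All heavy lifting (surjectivity of $I-A$, monotonicity) has been done in Proposition \ref{prop:sol-faible-ell} and \eqref{eq:lienAE}; the rest is the standard semigroup argument.
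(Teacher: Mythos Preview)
Your proof is correct and follows the same approach as the paper: both verify that $-A$ is maximal monotone (monotonicity via \eqref{eq:lienAE} and $(\mathbf E)$, surjectivity of $I-A$ via Proposition~\ref{prop:sol-faible-ell}) and then invoke the Hille--Yosida theorem from Brezis. The paper's proof is terser---it simply cites Brezis's Theorems~VII.4 and~VII.5 for both parts---whereas you spell out the induction for the higher regularity statement; your extra checks of density and closedness are harmless but not strictly needed, since they come for free once maximal monotonicity is established.
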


\begin{proof}
According to \cite{brezis} it suffices to check that $(-A,\cD(A))$ is maximal monotone. 
But thanks to Assumption {\bf (E)} we immediately see that
$\langle -Av,v\rangle_{L^2(\R^d)}=\cE(v,v)\geq 0$, for any $v\in\cD(A)$, and thanks to Proposition \ref{prop:sol-faible-ell} we have that for any $f\in L^2(\R^d)$ there exists $u\in\cD(A)$ solving \eqref{eq:resol}.
\end{proof}

Using now Proposition \ref{prop-sautzero}, Corollary \ref{cor:regDA} and Proposition \ref{prop:hille} together with some Sobolev embedding theorems, we show Theorem \ref{thm-existe-sol-classique}.

\subsubsection{Proof of Theorem \ref{thm-existe-sol-classique}}
\begin{proof}
Assume $d$ is even.
Apply the result of Proposition \ref{prop:hille} with $k=k_0 = \lfloor \frac{d}{4}\rfloor + 2$ and consider $u$ solution of \eqref{eq:dudtfaible}.
We have that 
$$
u\in C^1\pare{[0,T];\cD\pare{A^{k_0-1}}}
$$
with $k_0-1 = \lfloor\frac{d}{4}\rfloor + 1$.
Using the result of Corollary \ref{cor:regDA} and combining Corollary IX.13 p. 168 with Theorem IX.7 p. 157 in \cite{brezis}, we see that for any $t\in [0,T]$
\begin{equation}
u_\pm(t,.)\in H^{4 + 2\lfloor\frac{d}{4}\rfloor}(D_{\pm})\subset H^{2 + \frac{d}{2}}(D_{\pm})\xhookrightarrow{} C^{2}(\bar{D}_{\pm}).
 \end{equation}
 
 Assume now that $d$ is odd.
 Apply the result of Proposition \ref{prop:hille} with $k=k_0 = \lfloor\frac{3}{2} + \frac{\lfloor d/2\rfloor}{2}\rfloor+2$ and consider $u$ solution of \eqref{eq:dudtfaible}. 
We have that 
$$
u\in C^1\pare{[0,T];\cD\pare{A^{k_0-1}}}
$$
with $k_0-1 = \lfloor\frac{3}{2} + \frac{\lfloor d/2\rfloor}{2}\rfloor + 1$.
Using the result of Corollary \ref{cor:regDA} and combining Corollary IX.13 p. 168 with Theorem IX.7 p. 157 in \cite{brezis}, we see that for any $t\in [0,T]$
\begin{equation}
u_\pm(t,.)\in H^{2+ 2\lfloor\frac{3}{2} + \frac{\lfloor d/2\rfloor}{2}\rfloor}(D_{\pm})\xhookrightarrow{} C^{2}(\bar{D}_{\pm})
 \end{equation}
 since 
 \begin{align*}
 \lfloor 2 + 2\lfloor\frac{3}{2} + \frac{\lfloor d/2\rfloor}{2}\rfloor - \frac{d}{2}\rfloor \geq \lfloor 2 + 2\pare{\frac{1}{2} + \frac{\lfloor d/2\rfloor}{2}} - \frac{d}{2}\rfloor\geq \lfloor3 + \lfloor\frac{d}{2}\rfloor - \frac{d}{2}\rfloor \geq 2.
 \end{align*}

Let us now show that $u$ solution of \eqref{eq:dudtfaible} (for the corresponding $k_0$) is a classical solution of $(\mathcal{P}_\mathrm{T})$.

First, it is clear that $\cL u$ coincides with $Au$ on any bounded part of $D_\pm$ (the derivatives in the distributional sense coincide with the classical derivatives thanks to the established smoothness of $u$). This shows the first line of $(\mathcal{P}_\mathrm{T})$.

Second, as for any $t\in [0,T]$ the function $u(t,.)$ belongs to $\cD(A)$, we have using the result of Proposition \ref{prop-sautzero} that
\begin{equation}
\label{eq:sautzero-t}
\left [u(t,.)\right ]_\Gamma = 0 \;\;\;\text{a.e. on }\Gamma;\;\;\;\;\left [\cB_\nu u(t,.)\right ]_\Gamma = 0\;\;\;\text{a.e. on }\Gamma.
\end{equation}
Note that $u(t,.) \in \cD(A)$ implies that $u_\pm(t,.)$ are in $H^{2}(D_\pm)$. So that the second part of \eqref{eq:sautzero-t} reads
$$
\nu^\ast \pare{\gamma\pare{a_+\nabla u_+(t,.)} - \gamma\pare{a_-\nabla u_-(t,.)}} = 0\;\;\;\text{a.e.}
$$
But as $\pare{a_\pm \nabla u_\pm}\in C^1(\bar{D}_\pm; \R^d)$, we get
$$
\langle\,(a_+\nabla_x u_+(t,.))(y) - (a_-\nabla_x u_-(t,.))(y), \nu(y)\,\rangle = 0
$$
for almost every $y\in \Gamma$, and consequently for every $y\in \Gamma$ by continuity.
The same argument applies to the first part of \eqref{eq:sautzero-t} and the second and third lines of $(\mathcal{P}_\mathrm{T})$ are satisfied. Note that the constructed solution satisfies $u(t,.)\in C\pare{\R^d}$ for any time $t\in [0,T]$.

Now let $k\geq k_0$. For $\lceil\frac{d}{4}\rceil\leq j\leq k$, we have $2j-\frac{d}{2}>0$. Thus, for $v\in \cD(A^j)$ we have from Corollary~\ref{cor:regDA},
$$
v_\pm\in H^{2j}(D_\pm)\xhookrightarrow{} C^{n(j)}(\bar{D}_{\pm})
$$
with $n(j) = \lfloor2j-\frac{d}{2}\rfloor$. Using again the result of Proposition \ref{prop:hille}, we get the announced result.
 \end{proof}

\subsection{Conclusion and consequences: boundedness of the partial derivatives}

Going a bit further in the analysis, and using additional Sobolev embedding arguments, we can state the following result.

\begin{proposition}
\label{prop:der-born}
Assume  $(\mathbf{E})$.
Let $p,q\in\N$ with $p+\lfloor q/2\rfloor\geq 2$. Let $m=\lceil \frac q 2 +\frac d 4\rceil$, $m'=m+1$ and $k=m'+p$. 
Assume that the coefficients $a_{ij}$ satisfy $(a_\pm)_{ij}\in C_b^{2m'-1}(\bar{D}_\pm)$, that $\Gamma$ is bounded and of class
$C^{2m'}$, and that $u_0\in\cD(A^k)$.

Then the classical solution 
$u(t,x)$ of $(\cP_{\mathrm{T}})$ constructed in Theorem \ref{thm-existe-sol-classique} satisfies
$$
u\in C^p([0,T]\,;\,C^q_b(\bar{D}_+)\cap C^q_b(\bar{D}_-)).$$
\end{proposition}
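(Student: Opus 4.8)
The plan is to bootstrap from Theorem~\ref{thm-existe-sol-classique} by using both the spatial smoothness already known on the iterated domains $\cD(A^j)$ (via Corollary~\ref{cor:regDA} and Sobolev embedding) and the time-regularity statement $u\in C^{k-j}([0,T];\cD(A^j))$ supplied by Proposition~\ref{prop:hille}. The key point is that these two pieces of information can be combined for a \emph{single} well-chosen index $j$: we want enough spatial derivatives ($q$ of them, bounded up to the interface) while still keeping $p$ derivatives in time. Concretely, I would apply Proposition~\ref{prop:hille} with the index $k=m'+p$ and look at the level $j=m'$, so that $u\in C^{p}([0,T];\cD(A^{m'}))$, and then upgrade the spatial regularity of $\cD(A^{m'})$ to $C^q_b(\bar{D}_\pm)$.

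First I would verify that the hypotheses line up: with $m=\lceil q/2+d/4\rceil$, $m'=m+1$ and $k=m'+p$, the assumptions $(a_\pm)_{ij}\in C_b^{2m'-1}(\bar{D}_\pm)$ and $\Gamma$ bounded of class $C^{2m'}$ are exactly what Corollary~\ref{cor:regDA} needs at level $m'$ (it requires $C_b^{2m'-1}$ coefficients and a $C^{2m'}$ interface), so every $v\in\cD(A^{m'})$ satisfies $v_\pm\in H^{2m'}(D_\pm)$. Next, I would check the Sobolev embedding $H^{2m'}(D_\pm)\hookrightarrow C^q_b(\bar{D}_\pm)$: one needs $2m'-d/2>q$, i.e. $2m'>q+d/2$. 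Since $m'=m+1\geq q/2+d/4+1$, we get $2m'\geq q+d/2+2>q+d/2$, so the embedding holds, and it is into the \emph{bounded} $C^q$ space because $D_+$ is bounded while on the unbounded $D_-$ the $H^{2m'}$-norm controls the sup-norm of derivatives up to order $q$ globally (this uses that $D_-$ is a $C^{2m'}$ domain with bounded boundary, so the embedding constant is uniform — essentially the same remark as in the proof of Proposition~\ref{prop:GT}). Combining: for each $t$, $u(t,\cdot)_\pm\in C^q_b(\bar{D}_\pm)$, and the map $\cD(A^{m'})\to C^q_b(\bar{D}_+)\cap C^q_b(\bar{D}_-)$ is continuous (composition of the bounded inclusion $\cD(A^{m'})\hookrightarrow H^{2m'}(D_+)\oplus H^{2m'}(D_-)$ — which is bounded because the graph norm controls the $H^{2m'}$ norms, again by Corollary~\ref{cor:regDA} together with elliptic estimates of Proposition~\ref{prop:GT}/Theorem~\ref{thm:reg-sol-faible} — followed by the bounded Sobolev embedding).

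Then I would transport the time-regularity through this continuous linear map. Proposition~\ref{prop:hille} (applicable since $k=m'+p\geq 2$, using $p+\lfloor q/2\rfloor\geq 2$ to ensure the index is at least $2$ — one should double-check this arithmetic: $m'\geq 2$ always, and if $p\geq 1$ then $k\geq 3$, while if $p=0$ then $\lfloor q/2\rfloor\geq 2$ forces $q\geq 4$ hence $m'\geq 4$ so $k\geq 4$) gives $u\in C^{k-m'}([0,T];\cD(A^{m'}))=C^p([0,T];\cD(A^{m'}))$, the derivatives $d^i u/dt^i$ for $i\le p$ being continuous curves into $\cD(A^{m'})$. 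Post-composing with the bounded linear embedding $\iota:\cD(A^{m'})\to C^q_b(\bar{D}_+)\cap C^q_b(\bar{D}_-)$ preserves $C^p$-regularity of the curve, so $t\mapsto u(t,\cdot)$ lies in $C^p([0,T];C^q_b(\bar{D}_+)\cap C^q_b(\bar{D}_-))$, which is the claim. One small subtlety to address is that the time derivatives produced by Hille--Yosida are a priori only $L^2(\R^d)$-strong derivatives; but since they land in $\cD(A^{m'})$ and $\iota$ is continuous on that space (with its graph topology, which is finer than $L^2$), the difference quotients converge in the $C^q_b$ topology as well, so the classical partial derivatives $\partial_t^i\partial_x^\alpha u$ exist pointwise and are bounded.

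The main obstacle, and the only genuinely non-routine point, is the uniformity of the Sobolev embedding on the \emph{unbounded} domain $D_-$: a naive application of $H^{2m'}\hookrightarrow C^q_b$ would need $D_-$ to be bounded or to have bounded geometry. Here one relies on $D_-$ being a $C^{2m'}$ domain whose boundary $\Gamma$ is bounded, so $D_-$ is a uniform (Lipschitz, indeed smooth) domain and admits a bounded extension operator $H^{2m'}(D_-)\to H^{2m'}(\R^d)$; then the standard embedding on $\R^d$ applies and gives a global bound. This is the same structural fact already invoked implicitly in Proposition~\ref{prop:GT} and in the proof of Theorem~\ref{thm-existe-sol-classique}, so it can be cited rather than reproved, but it is the step that deserves an explicit sentence. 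Everything else is bookkeeping with the indices $m,m',k$ and the already-established propositions.
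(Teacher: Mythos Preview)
Your proposal is correct and follows essentially the same route as the paper: apply Proposition~\ref{prop:hille} at level $j=m'$ to get $u\in C^p([0,T];\cD(A^{m'}))$, then use Corollary~\ref{cor:regDA} to pass to $H^{2m'}(D_\pm)$ and Sobolev embedding to land in $C^q_b(\bar{D}_\pm)$. The only cosmetic difference is that the paper splits the last step in two---first $H^{2m}\hookrightarrow C^q$ via $\lfloor 2m-d/2\rfloor\geq q$, then separately $\partial^\alpha v_\pm\in H^{2m'-q}\subset H^{d/2+2}\hookrightarrow L^\infty$ for boundedness---whereas you invoke $H^{2m'}\hookrightarrow C^q_b$ in one shot; your explicit remarks on the extension operator for the unbounded $D_-$ and on the continuity of the embedding (to transport time regularity) are points the paper leaves implicit.
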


\begin{proof}
First, notice that it is easy to check that $k$ is greater than the $k_0$ defined in Theorem \ref{thm-existe-sol-classique}, so that it makes sense speaking of the classical solution of $(\cP_{\mathrm{T}})$, for $u_0\in\cD(A^k)$. 

This solution is constructed in the same way as in Theorem \ref{thm-existe-sol-classique}, in particular by the mean of Proposition \ref{prop:hille}. So that one can assert that
$$
u\in C^p([0,T]\,;\,\cD(A^{m'})).
$$
It remains to check that if $v\in\cD(A^{m'})$, then $v_\pm\in C^q_b(\bar{D}_\pm)$. First, note that $m\geq \lceil \frac d 4\rceil$, and that one may easily check
$$
\lfloor 2m-\frac d 2 \rfloor \geq q$$
(using in particular $\lceil 2a \rceil\leq 2\lceil a\rceil$). So that if $v\in \cD(A^{m'})\subset \cD(A^m)$, we have, as
for the second part of Theorem \ref{thm-existe-sol-classique},
$$
v_\pm\in H^{2m}(D_\pm)\xhookrightarrow{} C^{\lfloor 2m-\frac d 2 \rfloor}(\bar{D}_{\pm})\subset C^q(\bar{D}_{\pm}).
$$
We claim that for any multi-index $\alpha$, $|\alpha|\leq q$, the partial derivatives $\partial^\alpha v_\pm$ are bounded.
Indeed, using again Corollary \ref{cor:regDA}, we get
$$
v_\pm\in H^{2m'}(D_\pm),$$
so that for $\alpha$, $|\alpha|\leq q$,
$$\partial^\alpha v_\pm \in H^{2\lceil \frac q 2 +\frac d 4\rceil-q+2}(D_\pm)\subset H^{\frac d 2+2}(D_\pm)\xhookrightarrow{} L^\infty(D_\pm).$$
Here we have used the fact $\frac 1 2-\frac 1 2-\frac 2 d<0$, so that one can use the third embedding result of Corollary IX.13 in \cite{brezis} (and again Theorem IX.7 for the projection argument). The result is proved.
\end{proof}

From the above proposition we get the following control on the partial derivatives of the solution to~$(\cP_{\mathrm{T}})$.

\begin{corollary}
\label{cor:control-der}
In the context of Proposition \ref{prop:der-born} we have
$$
\sup_{t\in[0,T]}\sup_{x\in \bar{D}_\pm}|\partial^j_t\partial^\alpha u_\pm(t,x)|<\infty
$$
for any $j\leq p$ and any multi-index $\alpha$, with $|\alpha|\leq q$.
\end{corollary}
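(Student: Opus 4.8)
The plan is to derive Corollary \ref{cor:control-der} as an immediate consequence of Proposition \ref{prop:der-born} by unwinding what membership in $C^p([0,T];C^q_b(\bar{D}_+)\cap C^q_b(\bar{D}_-))$ means. Concretely, fix $j\leq p$ and a multi-index $\alpha$ with $|\alpha|\leq q$. I would first use that $u\in C^p([0,T];C^q_b(\bar D_\pm))$ implies $t\mapsto \partial_t^j u(t,\cdot)$ is a well-defined continuous map from the compact interval $[0,T]$ into the Banach space $C^q_b(\bar D_\pm)$ equipped with its usual norm $\|g\|_{C^q_b}=\sum_{|\beta|\leq q}\sup_{x\in\bar D_\pm}|\partial^\beta g(x)|$. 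Continuity of a map into a normed space composed with the norm gives a continuous real-valued function $t\mapsto \|\partial_t^j u(t,\cdot)\|_{C^q_b(\bar D_\pm)}$ on $[0,T]$, which is therefore bounded by some finite constant $M_{j,\pm}$ on the compact $[0,T]$.

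The second step is to exchange the order of differentiation: for each fixed $t$, $\partial_t^j u(t,\cdot)$ is the element of $C^q_b(\bar D_\pm)$ whose $\alpha$-th spatial derivative is $\partial^\alpha\partial_t^j u_\pm(t,x)=\partial_t^j\partial^\alpha u_\pm(t,x)$; this commutation of $\partial_t^j$ with $\partial^\alpha$ is legitimate because the regularity $u\in C^p([0,T];C^q_b(\bar D_\pm))$ with $j\leq p$, $|\alpha|\leq q$ guarantees that all the mixed partials involved are continuous on $[0,T]\times\bar D_\pm$ (one may invoke Schwarz's theorem, or simply note that $\partial^\alpha$ is a bounded linear operator from $C^q_b$ into $C_b$ and commutes with the Banach-space-valued time derivative). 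Hence
$$
\sup_{t\in[0,T]}\sup_{x\in\bar D_\pm}|\partial_t^j\partial^\alpha u_\pm(t,x)|
=\sup_{t\in[0,T]}\sup_{x\in\bar D_\pm}|\partial^\alpha(\partial_t^j u_\pm)(t,x)|
\leq \sup_{t\in[0,T]}\|\partial_t^j u(t,\cdot)\|_{C^q_b(\bar D_\pm)}\leq M_{j,\pm}<\infty,
$$
and taking the maximum over the finitely many pairs $(j,\alpha)$ with $j\leq p$, $|\alpha|\leq q$ finishes the argument.

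There is essentially no obstacle here: the content is entirely in Proposition \ref{prop:der-born}, and the corollary is just a reformulation of the statement ``$u\in C^p([0,T];C^q_b(\bar D_+)\cap C^q_b(\bar D_-))$'' in terms of pointwise sup-bounds on individual mixed partial derivatives. The only mild point to be careful about is the commutation $\partial^\alpha\partial_t^j=\partial_t^j\partial^\alpha$ and the identification of the $\alpha$-th spatial component of the abstract time-derivative $\partial_t^j u(t,\cdot)\in C^q_b(\bar D_\pm)$ with the classical mixed partial $\partial_t^j\partial^\alpha u_\pm(t,x)$; this follows from the definition of the $C^p([0,T];\,\cdot\,)$ spaces together with the fact that evaluation at a point $x$ and the operator $\partial^\alpha$ are continuous on $C^q_b(\bar D_\pm)$, so they pass through limits defining the Banach-space-valued derivative. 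Thus the proof is a short three-line deduction from Proposition \ref{prop:der-born} plus the compactness of $[0,T]$.
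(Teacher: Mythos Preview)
Your proposal is correct and follows essentially the same approach as the paper: both use that a continuous map from the compact interval $[0,T]$ into $C_b(\bar D_\pm)$ (or $C^q_b(\bar D_\pm)$) has bounded range, which gives the desired finite supremum. The paper works one derivative at a time, noting that each $\partial_t^j\partial^\alpha u_\pm$ lies in $C([0,T];C_b(\bar D_\pm))$ and then proving explicitly via the reverse triangle inequality that $t\mapsto\sup_{x}|v(t,x)|$ is continuous and hence attains its maximum on $[0,T]$; you instead invoke the full $C^q_b$-norm at once and add a remark on commuting $\partial^\alpha$ with the Banach-space-valued time derivative, but the underlying idea is identical.
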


\begin{proof}
By Proposition \ref{prop:der-born} any of the considered partial derivatives of $u_\pm$ belongs to the space
$$
C([0,T]\,;\,C_b(\bar{D}_\pm)).$$
Let for example $v\in C([0,T]\,;\,C_b(\bar{D}_+))$. We prove the continuity of the map $t\mapsto \sup_{x\in \bar{D}_+}|v(t,x)|$, $t\in [0,T]$. Let 
$t_0\in[0,T]$.
Using the reverse triangle inequality we get for any $t\neq t_0$,
$$
\big|  \sup_{x\in \bar{D}_+} |v(t,x)|    -  \sup_{x\in \bar{D}_+}|v(t_0,x)|   \,\big|\leq \sup_{x\in \bar{D}_+} \big| v(t,x)-v(t_0,x)   \big|,$$
and we get the continuity at $t_0$, as $v$ is continuous from $[0,T]$ to $C_b(\bar{D}_+)$ (equipped with the supreme norm). Thus the desired continuity is proved, and from this we can assert that
$$
\sup_{t\in[0,T]}\sup_{x\in \bar{D}_+}|v(t,x)|=\sup_{x\in \bar{D}_+}|v(t^*,x)|$$
for some $t^*\in[0,T]$. As $v(t^*,\cdot)\in C_b(\bar{D}_+)$ we have that
$$
\sup_{t\in[0,T]}\sup_{x\in \bar{D}_+}|v(t,x)|<\infty.
$$
The result is proved.
\end{proof}

In the analysis of the convergence of our Euler scheme, we will use the above corollary with $p$ up to $2$ and~$q$ up to $4$.



\section{Euler scheme}
\label{sec:euler}

\subsection{Recalls on the projection and the distance to the transmission boundary and further notations and premiminaries}

In this subsection we adopt the notations from \cite{Bossy-et-al-2004}. We have the following set of geometric results.

\begin{proposition}[\cite{Bossy-et-al-2004}, Proposition 1; see also \cite{gobet_2001}]
\label{prop:gobet}
Assume  $\Gamma$ is bounded and of class $C^5$.  Assume  $(\mathbf{E})$. Assume that the coefficients $a_{ij}$ satisfy $(a_\pm)_{ij}\in C_b^{4}(\bar{D}_\pm)$.   

There is constant $R>0$ such that:
\begin{enumerate}
\item 
\begin{enumerate}
\item for any $x\in V^-_{\Gamma}(R)$, there are unique $s=\pi_{\Gamma}^{\gamma_+}(x)\in \Gamma$ and $F^{\gamma_+}(x)\leq 0$ such that~:~
\begin{equation}
x = \pi_{\Gamma}^{\gamma_+}(x) + F^{\gamma_+}(x){\gamma_+}(\pi_{\Gamma}^{\gamma_+}(x))\;;
\end{equation} 
\item for any $x\in V^+_{\Gamma}(R)$, there are unique $s=\pi_{\Gamma}^{\gamma_-}(x)\in \Gamma$ and $F^{\gamma_-}(x)\leq 0$ such that~:~
\begin{equation}
x = \pi_{\Gamma}^{\gamma_-}(x) + F^{\gamma_-}(x){\gamma_-}(\pi_{\Gamma}^{\gamma_-}(x))\;;
\end{equation} 
\end{enumerate}
\item 
\begin{enumerate}
\item the function $x\mapsto \pi_{\Gamma}^{\gamma_+}(x)$ is called the projection of $x$ on $\Gamma$ parallel to $\gamma_+$~:~this is a $C^4$ function on $V^-_{\Gamma}(R)\;;$
\item the function $x\mapsto \pi_{\Gamma}^{\gamma_-}(x)$ is called the projection of $x$ on $\Gamma$ parallel to $\gamma_-$~:~this is a $C^4$ function on $V^+_{\Gamma}(R)\;;$
\end{enumerate}
\item Let us set $\tilde{F}^{\gamma_\pm}(x) = F^{\gamma_{\pm}}(x)|\gamma_{\pm}\pare{\pi_{\Gamma}^{\gamma_\pm}(x)}|$ the normalized version of $F^{\gamma_{\pm}}$ corresponding to the unit vector field $\tilde{\gamma}_{\pm}~:~x\mapsto \frac{\gamma_{\pm}(x)}{|\gamma_{\pm}(x)|}$. 
\begin{enumerate}
\item the functions $x\mapsto \tilde{F}^{\gamma_{\pm}}(x)$ are called the algebraic distance of $x$ to $\Gamma$ parallel to $\gamma_\pm$ (to $\tilde{\gamma}_{\pm}$)~:~these are $C^4$ functions on $V^\mp_{\Gamma}(R)$. One has $F^{\gamma_+}, \tilde{F}^{\gamma_+}\leq 0$ on $V^-_{\Gamma}(R)$ and $F^{\gamma_-}, \tilde{F}^{\gamma_-}\leq 0$ on $V^+_{\Gamma}(R)$.
\item It is possible to extend $F^{\gamma_+}$, $\tilde{F}^{\gamma_+}$ and $F^{\gamma_-}, \tilde{F}^{\gamma_-}$ to $C_b^4(\R^d,\R)$ functions, with the conditions $F^{\gamma_\pm}, \tilde{F}^{\gamma_\pm}>0$ on $D_\pm$ and $F^{\gamma_\pm}, \tilde{F}^{\gamma_\pm}<0$ on $D_\mp$. 
\end{enumerate}
\item The above extensions for $\tilde{F}^{\gamma_\pm}$ and $F^{\nu}$ can be performed in a way such that the functions $\tilde{F}^{\gamma_{\pm}}$ and $F^{\nu}$ are equivalent in the sense that for all $x\in \R^d$,
\begin{equation}
\label{eq:equiv-dist-tildeF}
\frac{1}{c_1}d(x,\Gamma) = \frac{1}{c_1}\left |F^{\nu}(x)\right |\leq \left  |\tilde{F}^{\gamma_{\pm}}(x)\right |\leq c_1 \left |F^{\nu}(x)\right | = c_1 d(x,\Gamma)
\end{equation}
for some constant $c_1>1$.
\item For $x\in \Gamma$,
\begin{equation}
\label{eq:gradient-tildeF}
\nabla \tilde{F}^{\gamma_\pm}(x)  = \frac{\nu^\ast}{\langle \nu, \tilde{\gamma}_{\pm}\rangle }(x).
\end{equation}
\end{enumerate}
\end{proposition}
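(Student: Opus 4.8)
The plan is to reduce everything to the standard tubular neighborhood theorem, but with the straight normal lines replaced by the lines through $\Gamma$ directed by the (non-unit) co-normal fields $\gamma_+$ and $\gamma_-$. First I would invoke the hypotheses: since $\Gamma$ is bounded and $C^5$, and the $(a_\pm)_{ij}$ are in $C^4_b(\bar D_\pm)$, the maps $y\mapsto\gamma_\pm(y)=\pm a_\pm(y)\nu(y)$ are $C^4$ vector fields on $\Gamma$ (the normal $\nu$ is $C^4$ because $\Gamma$ is $C^5$), and by \eqref{eq:controle-conom} they are uniformly transversal to $\Gamma$, i.e. $\langle\gamma_\pm,\nu\rangle$ is bounded away from $0$. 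Consider the map $\Phi_+:\Gamma\times\R\to\R^d$, $\Phi_+(s,t)=s+t\,\gamma_+(s)$. Its differential at $(s,0)$ is invertible: on $T_s\Gamma$ it acts as the identity, and in the $t$-direction it gives $\gamma_+(s)$, which is not tangent to $\Gamma$ by transversality. By the inverse function theorem, compactness of $\Gamma$, and a standard covering/injectivity argument (identical to the proof of the tubular neighborhood theorem), there is $R_+>0$ such that $\Phi_+$ is a $C^4$ diffeomorphism from $\Gamma\times(-R_+',R_+')$ onto an open neighborhood of $\Gamma$ containing $V^-_\Gamma(R_+)$ for a suitable $R_+$; running the argument for $\gamma_-$ gives $R_-$, and I take $R=\min(R_+,R_-)$. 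The components of $\Phi_+^{-1}$ are exactly $\pi_\Gamma^{\gamma_+}$ and $F^{\gamma_+}$, which gives items 1(a), 1(b), 2(a), 2(b); the sign constraint $F^{\gamma_\pm}\le0$ on $V^\mp_\Gamma(R)$ is because those neighborhoods lie on the $D_\mp$ side of $\Gamma$ and $\gamma_\pm$ points toward $D_\pm$.

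Next, item 3: define $\tilde F^{\gamma_\pm}=F^{\gamma_\pm}\cdot|\gamma_\pm\circ\pi_\Gamma^{\gamma_\pm}|$. Since $|\gamma_\pm|\ge\lambda>0$ is $C^4$ on $\Gamma$ (ellipticity plus smoothness of $a_\pm$) and $\pi_\Gamma^{\gamma_\pm}$ is $C^4$, this is a product of $C^4$ functions, hence $C^4$ on $V^\mp_\Gamma(R)$, with the same sign as $F^{\gamma_\pm}$. For the extension in 3(b) and 4: on the tube the functions $F^{\gamma_\pm},\tilde F^{\gamma_\pm}$ already have the stated signs on each side of $\Gamma$ and vanish exactly on $\Gamma$; away from a smaller tube I extend them using a $C^\infty$ cutoff $\chi$ equal to $1$ near $\Gamma$ and supported in the tube, setting the extension equal to $\chi\tilde F^{\gamma_\pm}+(1-\chi)\,g_\pm$ where $g_\pm$ is any fixed $C^4_b$ function that is positive on $D_\pm$ and negative on $D_\mp$ (for instance a smoothed, truncated signed distance). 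One checks the result is $C^4_b$ and keeps the correct sign everywhere; similarly one fixes a global $C^4_b$ signed-distance-type function $F^\nu$ with $|F^\nu|=d(\cdot,\Gamma)$ near $\Gamma$. For the equivalence \eqref{eq:equiv-dist-tildeF}, near $\Gamma$ both $|\tilde F^{\gamma_\pm}|$ and $|F^\nu|=d(\cdot,\Gamma)$ vanish to first order on $\Gamma$ with nonvanishing, continuous gradients, so their ratio extends continuously and is bounded above and below on the compact tube; outside the tube both functions are bounded below by a positive constant and above by a constant, so adjusting $c_1$ handles that region — and one notes the cutoff constructions can be coordinated so a single $c_1>1$ works.

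For item 5, let $x\in\Gamma$. Then $\pi_\Gamma^{\gamma_\pm}(x)=x$ and $F^{\gamma_\pm}(x)=0$, so differentiating $\tilde F^{\gamma_\pm}=F^{\gamma_\pm}\cdot|\gamma_\pm\circ\pi_\Gamma^{\gamma_\pm}|$ by the product rule kills the term where the derivative hits $|\gamma_\pm\circ\pi_\Gamma^{\gamma_\pm}|$ (it is multiplied by $F^{\gamma_\pm}(x)=0$), leaving $\nabla\tilde F^{\gamma_\pm}(x)=|\gamma_\pm(x)|\,\nabla F^{\gamma_\pm}(x)$. To compute $\nabla F^{\gamma_\pm}(x)$: differentiate the defining identity $z=\pi_\Gamma^{\gamma_\pm}(z)+F^{\gamma_\pm}(z)\gamma_\pm(\pi_\Gamma^{\gamma_\pm}(z))$ at $z=x$. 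Applying $\nabla$ (as a row covector) and using $F^{\gamma_\pm}(x)=0$ gives, for any direction $h\in\R^d$, $h=D\pi_\Gamma^{\gamma_\pm}(x)h+(\nabla F^{\gamma_\pm}(x)\cdot h)\,\gamma_\pm(x)$. Since $D\pi_\Gamma^{\gamma_\pm}(x)h\in T_x\Gamma$, pairing with $\nu(x)$ annihilates that term and yields $\langle\nu(x),h\rangle=(\nabla F^{\gamma_\pm}(x)\cdot h)\langle\nu(x),\gamma_\pm(x)\rangle$, i.e. $\nabla F^{\gamma_\pm}(x)=\nu(x)^\ast/\langle\nu(x),\gamma_\pm(x)\rangle$. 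Multiplying by $|\gamma_\pm(x)|$ and writing $\tilde\gamma_\pm=\gamma_\pm/|\gamma_\pm|$ gives $\nabla\tilde F^{\gamma_\pm}(x)=\nu(x)^\ast/\langle\nu(x),\tilde\gamma_\pm(x)\rangle$, which is \eqref{eq:gradient-tildeF}.

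The main obstacle is the bookkeeping in the global extension step (items 3(b)–4): one must extend two pairs of functions $F^{\gamma_\pm},\tilde F^{\gamma_\pm}$ together with $F^\nu$ from a tube to all of $\R^d$, simultaneously preserving $C^4_b$ regularity, the exact sign pattern ($>0$ on $D_\pm$, $<0$ on $D_\mp$, zero set exactly $\Gamma$), and the two-sided equivalence with a single constant $c_1$. The analytic content is routine (cutoffs plus the fact that the ratio of two functions vanishing to first order on a compact hypersurface with transversal nonzero gradients is bounded above and below), but the simultaneous coordination of the constructions is where care is needed; everything else is the classical tubular neighborhood argument adapted to a transversal vector field.
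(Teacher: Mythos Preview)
Your proof plan is sound, but note that the paper does not actually prove this proposition: it is quoted from \cite{Bossy-et-al-2004}, Proposition~1 (see also \cite{gobet_2001}), and the only justification given in the paper is the remark that under the stated hypotheses the co-normal fields $\gamma_\pm$ are $C^4$ on $\Gamma$ and satisfy the transversality condition \eqref{eq:controle-conom}, so that one is indeed in the setting of the cited result. Your sketch---the tubular neighborhood theorem with the normal direction replaced by a transversal $C^4$ field, plus cutoff-based global extensions and the first-order computation for the gradient on $\Gamma$---is exactly the kind of argument behind that cited proposition, so there is no discrepancy in substance; you have simply supplied the details the paper delegates to the literature.
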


\begin{remark}
Under the assumptions of Proposition \ref{prop:gobet} we have that the vector fields $\gamma_\pm(x), x\in\Gamma$, are of class $C^4$, and we have \eqref{eq:controle-conom}. Thus we are indeed under the assumptions of
 Proposition 1 in \cite{Bossy-et-al-2004}.
\end{remark}

We sometimes use the notation $\nu(x)$ or $\gamma_{\pm}(x)$ even if $x\notin \Gamma$. For $x \in V^{\pm}_\Gamma(R)$, we set
$\nu(x) = \nu(\pi_\Gamma^{\gamma_\pm}(x))$ and $\gamma_{\pm}(x) = \gamma_{\pm}(\pi_\Gamma^{\gamma_\pm}(x))$ and for $x\notin
V_\Gamma^{{\pm}}(R)$, arbitrary values are given.

\vspace{0.2cm}
Note that if $u$ is a classical solution to the transmission parabolic problem $(\cP_{\mathrm{T}})$ defined in Section \ref{sec:EDP}, the transmission condition $(\star)$ can be expressed as 
\begin{equation}
\label{eq:trans-gamma}
\langle \gamma_+(y)\,,\,\nabla_xu_+(t,y)\rangle=-\langle \gamma_-(y)\,,\nabla_xu_-(t,y)\rangle,\quad \forall (t,y)\in (0,T]\times \Gamma.
\end{equation}
This in fact will be the crux of our approach (see Subsubsection \ref{ssec:cancel-tc}).

\vspace{0.2cm}

In the sequel, we will need the following result.
\begin{proposition}
\label{prop:geometrique}
Assume  $\Gamma$ is bounded and of class $C^5$.  Assume  $(\mathbf{E})$. Assume that the coefficients $a_{ij}$ satisfy $(a_\pm)_{ij}\in C_b^{4}(\bar{D}_\pm)$.   

Let $\hat{x}\in V_{\Gamma}^{\mp}(R)$ and $\overline{x}\in V_{\Gamma}^{\mp}(R)$ be linked by the following relation~:~
\begin{equation}
\label{eq:relation-x-xchapeau}
\overline{x} = \pi_{\Gamma}^{\gamma_{\pm}}(\hat{x}) - F^{\gamma_{\pm}}(\hat{x})\gamma_{\mp}(\pi_{\Gamma}^{\gamma_{\pm}}(\hat{x})).
\end{equation}
Then, there exists $c_2>1$ such that
\begin{equation}
\label{eq:equiv-dist-transformation}
\frac{1}{c_2}\, d(\overline{x},\Gamma)\leq d(\hat{x},\Gamma)\leq {c_2}\, d(\overline{x},\Gamma).
\end{equation}
\end{proposition}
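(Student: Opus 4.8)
The relation \eqref{eq:relation-x-xchapeau} says that $\hat{x}$ and $\overline{x}$ have the \emph{same} projection $s := \pi_{\Gamma}^{\gamma_{\pm}}(\hat{x})$ on $\Gamma$, but sit on opposite-pointing rays: $\hat{x} = s + F^{\gamma_{\pm}}(\hat{x})\,\gamma_{\pm}(s)$ and $\overline{x} = s - F^{\gamma_{\pm}}(\hat{x})\,\gamma_{\mp}(s)$, with the \emph{same} scalar coefficient $F^{\gamma_{\pm}}(\hat{x})\leq 0$. The plan is to control $d(\hat{x},\Gamma)$ and $d(\overline{x},\Gamma)$ each, up to multiplicative constants, by the common quantity $|F^{\gamma_{\pm}}(\hat{x})|$, and then chain the two estimates.

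\textbf{Step 1: relate $d(\hat x,\Gamma)$ to $|F^{\gamma_\pm}(\hat x)|$.} Since $|\gamma_{\pm}|$ is continuous on the compact set $\Gamma$ and bounded below by $\lambda>0$ via \eqref{eq:controle-conom} (hence bounded between two positive constants), the point $\hat{x}$ satisfies $d(\hat{x},\Gamma)\leq |\hat{x}-s| = |F^{\gamma_{\pm}}(\hat{x})|\,|\gamma_{\pm}(s)| \leq C|F^{\gamma_{\pm}}(\hat{x})|$. For the reverse inequality I invoke the equivalence already recorded in Proposition \ref{prop:gobet}: by \eqref{eq:equiv-dist-tildeF}, $d(\hat{x},\Gamma) = |F^{\nu}(\hat{x})| \geq \tfrac{1}{c_1}|\tilde{F}^{\gamma_{\pm}}(\hat{x})| = \tfrac{1}{c_1}|F^{\gamma_{\pm}}(\hat{x})|\,|\gamma_{\pm}(s)| \geq \tfrac{\lambda}{c_1}|F^{\gamma_{\pm}}(\hat{x})|$. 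Combining, $d(\hat{x},\Gamma)$ and $|F^{\gamma_{\pm}}(\hat{x})|$ are comparable up to constants depending only on $c_1,\lambda,\Lambda$.

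\textbf{Step 2: relate $d(\overline x,\Gamma)$ to $|F^{\gamma_\pm}(\hat x)|$.} The upper bound is immediate as before: $d(\overline{x},\Gamma)\leq |\overline{x}-s| = |F^{\gamma_{\pm}}(\hat{x})|\,|\gamma_{\mp}(s)| \leq C|F^{\gamma_{\pm}}(\hat{x})|$, using boundedness of $|\gamma_{\mp}|$ on $\Gamma$. For the lower bound I use that $\overline{x}\in V_\Gamma^{\mp}(R)$, so by Proposition \ref{prop:gobet} applied to the field $\gamma_{\mp}$, the point $\overline{x}$ has a representation $\overline{x} = \pi_\Gamma^{\gamma_\mp}(\overline{x}) + F^{\gamma_\mp}(\overline{x})\,\gamma_{\mp}(\pi_\Gamma^{\gamma_\mp}(\overline{x}))$; but the relation $\overline{x} = s - F^{\gamma_{\pm}}(\hat{x})\gamma_{\mp}(s)$ is exactly such a representation with base point $s$ and coefficient $-F^{\gamma_{\pm}}(\hat{x})\geq 0$ --- hmm, the sign is wrong relative to the convention $F^{\gamma_\mp}\leq 0$ on $V^{\pm}_\Gamma(R)$, so instead I should simply apply the equivalence \eqref{eq:equiv-dist-tildeF} for the field $\gamma_{\mp}$ to bound $d(\overline{x},\Gamma) = |F^{\nu}(\overline{x})| \geq \tfrac{1}{c_1}|\tilde{F}^{\gamma_{\mp}}(\overline{x})|$ and then recognise $|\tilde{F}^{\gamma_{\mp}}(\overline{x})| = |F^{\gamma_{\mp}}(\overline{x})|\,|\gamma_{\mp}(\pi_\Gamma^{\gamma_\mp}(\overline{x}))|$ with $|F^{\gamma_{\mp}}(\overline{x})| = |F^{\gamma_{\pm}}(\hat{x})|$ by uniqueness in the representation of $\overline{x}$ along $\gamma_\mp$. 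This gives $d(\overline{x},\Gamma)\geq \tfrac{\lambda}{c_1}|F^{\gamma_{\pm}}(\hat{x})|$.

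\textbf{Step 3: conclude.} Chaining Steps 1 and 2, both $d(\hat{x},\Gamma)$ and $d(\overline{x},\Gamma)$ lie between $c|F^{\gamma_{\pm}}(\hat{x})|$ and $C|F^{\gamma_{\pm}}(\hat{x})|$ for uniform constants $0<c<C$, whence \eqref{eq:equiv-dist-transformation} holds with $c_2 = C/c$. The main obstacle is the bookkeeping around signs and the correct matching of base points in the two parallel representations of $\overline{x}$: one must be careful that $\overline{x}$ genuinely lies in the tubular neighbourhood $V_\Gamma^{\mp}(R)$ where the $\gamma_\mp$-projection is well defined, and that the coefficient appearing there coincides in absolute value with $F^{\gamma_{\pm}}(\hat{x})$; possibly $R$ has to be shrunk so that the map \eqref{eq:relation-x-xchapeau} sends $V_\Gamma^{\mp}(R)$ into the domain of validity of Proposition \ref{prop:gobet} for both fields, which is where the hypotheses on $\Gamma$ and the $a_{ij}$ are used.
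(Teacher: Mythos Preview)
Your approach is essentially the same as the paper's: both argue by showing that $d(\hat{x},\Gamma)$ and $d(\overline{x},\Gamma)$ are each comparable to the common scalar $|F^{\gamma_\pm}(\hat{x})|$ (up to the bounded ratio $|\gamma_+|/|\gamma_-|$), and then chain. The paper resolves the sign/domain bookkeeping you flag in Step~2 by a simple device you nearly reach but do not quite execute: instead of projecting $\overline{x}$ along $\gamma_\mp$ (which in Proposition~\ref{prop:gobet} is set up on $V^{\pm}_\Gamma(R)$, the wrong side), it projects along the field $-\gamma_\mp$. Since $-\gamma_\mp$ points into $D_\pm$ just as $\gamma_\pm$ does, the analogue of Proposition~\ref{prop:gobet}(1)(a) applies on $V^{\mp}_\Gamma(R)$, and one reads off directly $\pi_\Gamma^{-\gamma_\mp}(\overline{x})=s$ and $F^{-\gamma_\mp}(\overline{x})=F^{\gamma_\pm}(\hat{x})\leq 0$ by uniqueness, with the equivalence~\eqref{eq:equiv-dist-tildeF} holding verbatim for $-\gamma_\mp$ in place of $\gamma_\mp$. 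This cleanly delivers the lower bound $d(\overline{x},\Gamma)\geq \tfrac{1}{c_1}|F^{\gamma_\pm}(\hat{x})|\,|\gamma_\mp(s)|$ that your Step~2 aims for. Your argument as written relies on identifying $|F^{\gamma_\mp}(\overline{x})|$ with $|F^{\gamma_\pm}(\hat{x})|$ via uniqueness of the $\gamma_\mp$-representation, but that uniqueness statement in Proposition~\ref{prop:gobet} is only asserted on $V^{\pm}_\Gamma(R)$, not on the side where $\overline{x}$ lives; switching to $-\gamma_\mp$ removes the difficulty entirely.
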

\begin{proof}
Without loss of generality, assume for example that $\overline{x}\in V_\Gamma^{-}(R)$ and $\hat{x}\in V_\Gamma^{-}(R)$ are related by \eqref{eq:relation-x-xchapeau}. 
Then we have
\begin{equation}
\label{eq:relation-x-xchapeau-cote}
\overline{x} - \pi_{\Gamma}^{\gamma_{+}}(\hat{x}) = -F^{\gamma_+}(\hat{x})\gamma_{-}(\pi^{\gamma_{+}}_{\Gamma}(\hat{x})).
\end{equation}
and by uniqueness of the projection $\pi_\Gamma^{-\gamma_-}(\overline{x})$, we see that $\pi^{\gamma+}_{\Gamma}(\hat{x}) = \pi_\Gamma^{-\gamma_-}(\overline{x})$ (note that $F^{-\gamma_-}(\overline{x})=F^{\gamma_+}(\hat{x})$).

We deduce that
\begin{align*}
\frac{1}{c_1} d(\overline{x},\Gamma)\leq |\tilde{F}^{-\gamma_-}(\overline{x})|=
|F^{-\gamma_-}(\overline{x})|\times |\gamma_-(\pi_\Gamma^{-\gamma_-}(\overline{x}))|
=|\overline{x} - \pi_\Gamma^{\gamma_+}(\hat{x})| \\
\\
 = |\overline{x} - \pi_\Gamma^{-\gamma_-}(\overline{x})|   \leq c_1 d(\overline{x},\Gamma)\\
\end{align*}
due to the same kind of relation as \eqref{eq:equiv-dist-tildeF}, but written for $-\gamma_{-}$ instead of $\gamma_{-}$.
Returning back to \eqref{eq:relation-x-xchapeau-cote}, we see that 
$$
\frac{1}{c_1} d(\overline{x},\Gamma)\leq 
|F^{\gamma_+}(\hat{x})|\times |\gamma_{-}(\pi_\Gamma(\hat{x}))|= 
|\tilde{F}^{\gamma_+}(\hat{x})|\,\frac{|\gamma_{-}(\pi_\Gamma(\hat{x}))|}{|\gamma_{+}(\pi_\Gamma(\hat{x}))|}  \leq c_1 d(\overline{x},\Gamma).
$$
So that in view of \eqref{eq:equiv-dist-tildeF} written for $\hat{x}$ and $\gamma_+$,
$$
\frac{1}{c_1^2} \frac{|\gamma_{+}(\pi_\Gamma(\hat{x}))|}{|\gamma_{-}(\pi_\Gamma(\hat{x}))|} d(\overline{x},\Gamma)\leq d(\hat{x},\Gamma) \leq c_1^2 \frac{|\gamma_{+}(\pi_\Gamma(\hat{x}))|}{|\gamma_{-}(\pi_\Gamma(\hat{x}))|} d(\overline{x},\Gamma).
$$
But using   \eqref{eq:ell} and \eqref{eq:decomp-ortho}, it easy to see that for any $z\in \Gamma$,
\begin{equation*}
\frac{\lambda^2}{\Lambda^2d^2}\leq \frac{|\gamma_{+}(z)|^2}{|\gamma_{-}(z)|^2} \leq \frac{\Lambda^2d^2}{\lambda^2}
\end{equation*}
from which we deduce the result of the proposition.
\end{proof}

\subsection{Our transformed Euler scheme} 
\label{ssec:euler}
We are now in position to introduce our transformed Euler scheme.

Let us denote from now on $\triangle t=h_n=\frac T n$ the time step (where $n\in \N^\ast$) and fix a starting point $x_0\in \R^d$.

The time grid is given by $(t^n_k)_{k=0}^n$ with $t^n_k=\frac {Tk}{n}$ for $0\leq k\leq n$.

We denote by $(\Delta W_{k+1})_{k=0}^n$ the i.i.d. sequence of Brownian increments constructed on $(\Omega, {\cal F}, \P^{x_0})$ and defined by 
$$\Delta W_{k+1}=W_{t_{k+1}}-W_{t_k}, \quad \forall\,\,0\leq k\leq n.$$

Recall that $\sigma:\R^d\to\R^{d\times d}$ stands for a matrix valued coefficient satisfying
$$
\sigma\sigma^*(x)=2a(x),\quad\forall x\in D.$$

Set $(\partial a(x))_j = {\rm div}(x\mapsto (a_{1j}(x), \dots, a_{nj}(x))).$

\vspace{0.8cm}
Our stochastic numerical scheme $\pare{\overline{X}^n_{t_k}}_{k=0}^n$ is defined as follows (we omit the superscript $n$) 
$$\overline{X}_{0} = x_0$$ and for $t\in (t_k, t_{k+1}]$, we set
{\small
\begin{equation}
\label{eq-scheme}
\left \{ 
\begin{array}{ll}
\hat{X}_{t} =\overline{X}_{t_{k}}+ \sigma(\overline{X}_{t_k})(W_t - W_{t_k}) + \partial a(\overline{X}_{t_k})(t-t_k)  &\!\!\!\!\!\text{ (standard Euler incrementation)}\\
\\
\overline{X}_{t_{k+1}} = \hat{X}_{t_{k+1}}
&\!\!\!\!\!\!\!\!\!\!\!\text{ if }\pare{\overline{X}_{t_k}\in \overline{D}_+ \text{ and } \hat{X}_{t_{k+1}}\in D_+}\\
&\!\!\!\!\!\!\!\!\!\!\text{ or }\pare{\overline{X}_{t_k}\in \overline{D}_- \text{ and } \hat{X}_{t_{k+1}}\in D_-}~;\\
\\
\overline{X}_{t_{k+1}} =\pi^{\gamma_+}_{\Gamma}(\hat{X}_{t_{k+1}}) - F^{\gamma_{+}}(\hat{X}_{t_{k+1}})\gamma_-(\pi^{\gamma_+}_{\Gamma}(\hat{X}_{t_{k+1}})) &\text{ if }\overline{X}_{t_k}\in \overline{D}_+ \text{ and } \hat{X}_{t_{k+1}}\in D_-~;\\
\\
\overline{X}_{t_{k+1}} =\pi^{\gamma_-}_{\Gamma}(\hat{X}_{t_{k+1}}) - F^{\gamma_{-}}(\hat{X}_{t_{k+1}})\gamma_+(\pi^{\gamma_-}_{\Gamma}(\hat{X}_{t_{k+1}})) &\text{ if }\overline{X}_{t_k}\in \overline{D}_- \text{ and } \hat{X}_{t_{k+1}}\in D_+~.
\end{array}
\right .                             
\end{equation}
}

\begin{figure}
\begin{center}
\includegraphics[width=16cm]{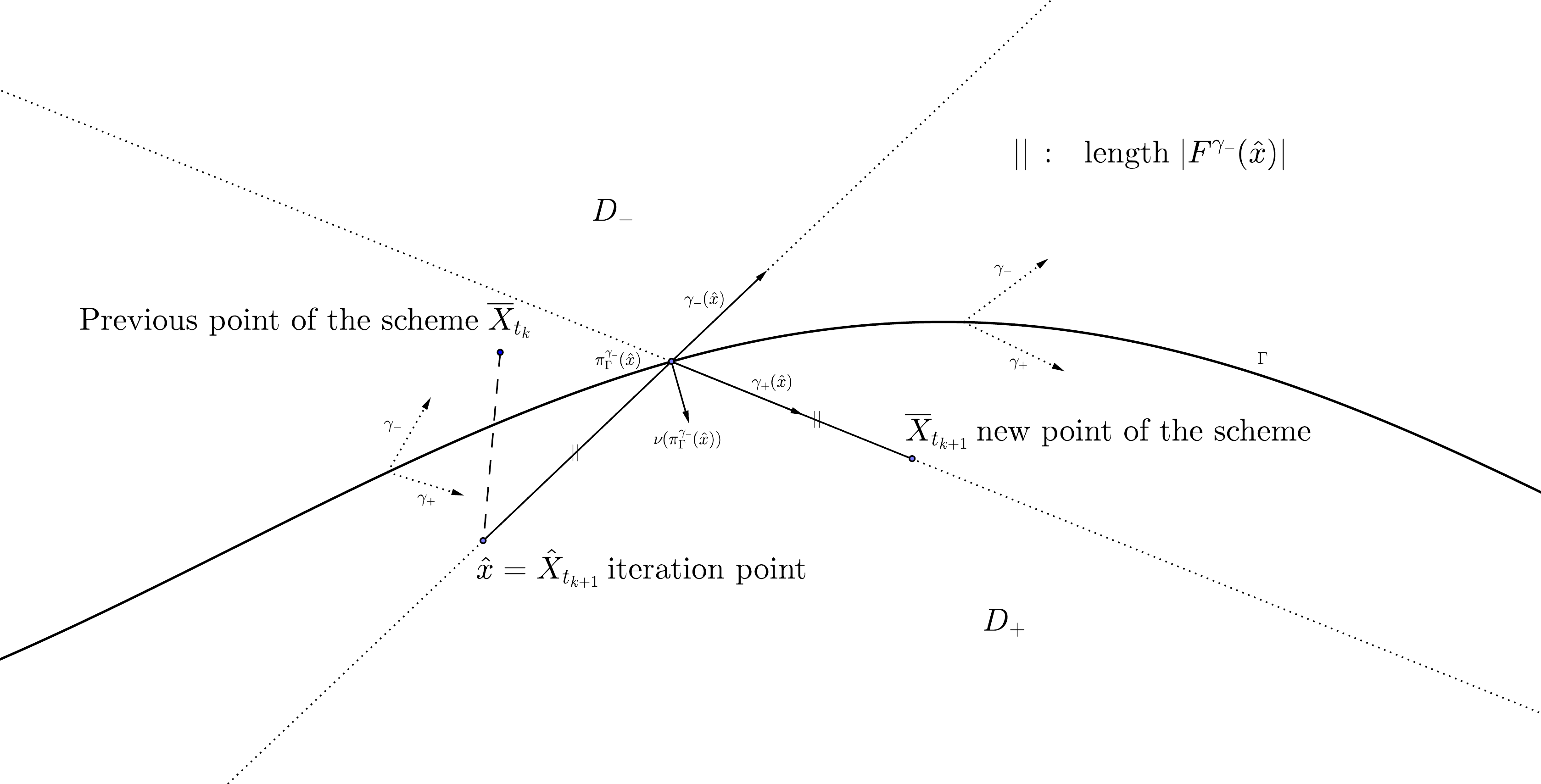}
\caption{Correction of our scheme when the path crosses the boundary $\Gamma$.}
\label{fig:schema}
\end{center}
\end{figure}

\begin{remark}
\label{rem:euler-1D}
When the dimension $d$ is reduced to $1$ (one dimensional problem), the discontinuity surface reduces to a single point (say $0$). In this case and when the coefficient $a = a_+\indi{y>0} + a_-\indi{y<0}$ is constant on both sides of the discontinuity, it is remarkable that our Euler Scheme is exactly the same as the one described in \cite{Martinez-Talay-2012}.

Indeed, in this one-dimensional context, let $\varphi(y) = \pare{a_-\indi{y>0} + a_+\indi{y<0}}y$. Note that $\varphi $ is a bijective map from $\R$ to $\R$. The Euler Scheme constructed in \cite{Martinez-Talay-2012} is then defined by $\overline{X}_0 = x_0$ and for all $k\in \{0,\dots, n\}$,
$$
\overline{X}_{t_k} = \varphi^{-1}\pare{\overline{Y}_{t_k}}
$$ 
where
$\overline{Y}_0 = \varphi(x_0)$ and for all $k\in \{0,\dots, n-1\}$
$$
\overline{Y}_{t_{k+1}} = \overline{Y}_{t_{k}} + \pare{a_-\sigma_+\indi{\overline{Y}_{t_{k}}>0} + a_+\sigma_-\indi{\overline{Y}_{t_{k}}<0}}(W_{t_{k+1}} - W_{t_k})\,; 
$$
(see \cite{Martinez-Talay-2012} for details - please take care that \cite{Martinez-Talay-2012} is written for the right-hand sided local time;
the above computation is valid for the symmetric local time).
For example if $\varphi(\overline{X}_{t_{k}})< 0$ and $\overline{Y}_{t_{k+1}}\geq 0$, we get (because $\varphi^{-1}(0)=0$ and $\varphi^{-1}$ is continuous at $0$ and also because $\overline{X}$ and $\overline{Y}$ share the same sign), 
\begin{align*}
\overline{X}_{t_{k+1}} &= \varphi^{-1}\pare{\varphi(\overline{X}_{t_{k}}) + \pare{a_-\sigma_+\indi{\overline{Y}_{t_{k}}>0} + a_+\sigma_-\indi{\overline{Y}_{t_{k}}<0}}\triangle W_{k}^{k+1}}\\
&=\overline{X}_{t_{k}} + \int_{\varphi(\overline{X}_{t_{k}})}^0 \pare{\varphi^{-1}}'(z)dz \\
&\hspace{1cm}+ \int_0^{\varphi(\overline{X}_{t_{k}}) + (\underbrace{a_-\sigma_+\indi{\overline{Y}_{t_{k}}>0}}_{=0} + a_+\sigma_-\indi{\overline{Y}_{t_{k}}<0})\triangle W_{k}^{k+1}} \pare{\varphi^{-1}}'(z)dz\\
&=\overline{X}_{t_{k}} - \varphi(\overline{X}_{t_{k}})\frac{1}{a_+} + \pare{\varphi(\overline{X}_{t_{k}}) + a_+\sigma_-\triangle W_{k}^{k+1}}\frac{1}{a_-}\\
&=\frac{a_+}{a_-}\overline{X}_{t_{k}} + \frac{a_+}{a_-}\sigma_-\triangle W_{k}^{k+1},
\end{align*}
which turns out to be the corresponding case in \eqref{eq-scheme} in this one-dimensional context.
This correspondence is valid in all cases and our transformed Euler Scheme may be viewed as some kind of generalization of the Euler Scheme presented in \cite{Martinez-Talay-2012}.
\end{remark}

\section{Convergence rate of our Euler scheme}
\label{sec:conv-euler}
The purpose of this section is to prove the following result.

\begin{theorem}
\label{thm-conv-schema} Let $0<T<\infty$. Assume  $(\mathbf{E})$. Let
$m'=\lceil 2 +\frac d 4\rceil+1$ and $k=m'+2$.
Assume that the coefficients $a_{ij}$ satisfy $(a_\pm)_{ij}\in C_b^{2m'-1}(\bar{D}_\pm)$ and that $\Gamma$ is of class
$C^{2m'}$.
Let $u_0:\R^d\to\R$ be in the space ${\cal D}(A^k)$. 
Let $u$ be the classical solution of $(\cP_\mathrm{T})$.

We have that for all $n$ large enough, and all $x_0$ in $\R^d$,
\begin{equation}
\label{eq-err-schema}
\Big| u(T,x_0)-\E^{x_0}u_0(\overline{X}^n_T)  \Big|\leq K \sqrt{h_n},
\end{equation}
where the constant $K$ depends on $d$, $\lambda$, $\Lambda$, $u_0$ and $T$.
\end{theorem}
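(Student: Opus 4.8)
The plan is to run a Talay--Tubaro type weak-error expansion along the time grid, the whole difficulty being concentrated in the steps where the scheme \eqref{eq-scheme} crosses $\Gamma$. First I set $\tilde u(t,x)=u(T-t,x)$, so that $\partial_t\tilde u+\cL\tilde u=0$ on $(0,T)\times D$, $\tilde u$ is continuous across $\Gamma$, and $\tilde u(t,\cdot)$ satisfies the co-normal transmission condition \eqref{eq:trans-gamma} for every $t\in[0,T]$ (at $t=T$ as well, since $u_0\in\cD(A)$ forces $[u_0]_\Gamma=[\cB_\nu u_0]_\Gamma=0$ by Proposition \ref{prop-sautzero} and $u_0$ is smooth up to $\Gamma$). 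The values $m'=\lceil 2+\tfrac d4\rceil+1$ and $k=m'+2$ are chosen precisely so that Corollary \ref{cor:control-der} applies with $p=2$, $q=4$: $\partial_t^j\partial^\alpha\tilde u_\pm$ is bounded on $[0,T]\times\overline D_\pm$ for $j\le2$, $|\alpha|\le4$, and I fix $C^4(\R^d)$-extensions $\tilde u^{+},\tilde u^{-}$ of $\tilde u_+,\tilde u_-$ with globally bounded derivatives of these orders. Since $\overline X_0=x_0$ and conditional expectations telescope,
\[
 u(T,x_0)-\E^{x_0}u_0(\overline X^n_T)=\sum_{k=0}^{n-1}\E[p_{k+1}],\qquad p_{k+1}:=\tilde u(t_k,\overline X_{t_k})-\E[\tilde u(t_{k+1},\overline X_{t_{k+1}})\mid\cF_{t_k}],
\]
and it suffices to bound $\sum_k\bigl|\E[p_{k+1}]\bigr|$. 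Denoting by $\iota_k\in\{+,-\}$ the side of $\overline X_{t_k}$, I decompose $\E[p_{k+1}]=-\E[\varepsilon_{k+1}]-\E[R_{k+1}]$ with $\varepsilon_{k+1}:=\E\bigl[\tilde u^{\iota_k}(t_{k+1},\hat X_{t_{k+1}})\mid\cF_{t_k}\bigr]-\tilde u(t_k,\overline X_{t_k})$ and $R_{k+1}:=\tilde u(t_{k+1},\overline X_{t_{k+1}})-\tilde u^{\iota_k}(t_{k+1},\hat X_{t_{k+1}})$.

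The term $\varepsilon_{k+1}$ is the usual one-step Euler error: $\hat X_{t_{k+1}}=\overline X_{t_k}+\sigma(\overline X_{t_k})\Delta W_{k+1}+\partial a(\overline X_{t_k})h_n$ is a frozen-coefficient increment, and since $\tfrac12\sigma\sigma^*=a$ and the drift $\partial a$ is built so that the frozen infinitesimal generator at the base point equals $\cL$ there, an It\^o--Taylor expansion to fourth order gives $\varepsilon_{k+1}=h_n(\partial_t+\cL)\tilde u^{\iota_k}(t_k,\overline X_{t_k})+O(h_n^2)$; as $\overline X_{t_k}\in\overline D_{\iota_k}$ and $\partial_t\tilde u_{\iota_k}+\cL\tilde u_{\iota_k}=0$ holds up to $\overline D_{\iota_k}$, the leading term vanishes, so $|\varepsilon_{k+1}|\le Ch_n^2$. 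The point is that this constant is uniform in $k,x_0,n$: it only uses the \emph{global} boundedness of the derivatives $\partial_t^j\partial^\alpha\tilde u^{\pm}$ ($j\le2$, $|\alpha|\le4$) and of $\sigma,\partial a$, so no moment bound on $\overline X_{t_k}$ is required --- this is exactly why the global (not merely interior) estimates of Section \ref{sec:EDP} are needed. Hence $\bigl|\sum_k\E[\varepsilon_{k+1}]\bigr|\le Cnh_n^2=CTh_n$.

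The heart of the proof is $R_{k+1}$, and this is where the transmission condition cancels the first-order error. On the no-crossing part of step $k{+}1$ we have $\overline X_{t_{k+1}}=\hat X_{t_{k+1}}$ on side $\iota_k$, hence $\tilde u^{\iota_k}(t_{k+1},\hat X_{t_{k+1}})=\tilde u(t_{k+1},\overline X_{t_{k+1}})$ and $R_{k+1}=0$. On the crossing part, say $\iota_k=+$ and $\hat X_{t_{k+1}}\in D_-$ (the other orientation being symmetric), for $n$ large one has $\hat X_{t_{k+1}}\in V^-_\Gamma(R)$: on the complement $|\hat X_{t_{k+1}}-\overline X_{t_k}|\ge cR$, hence that event has conditional probability $\le e^{-cR^2/h_n}$ and, $\tilde u$ being bounded, contributes at most $2\|\tilde u\|_\infty\, n\, e^{-cR^2/h_n}\le\sqrt{h_n}$ in total for $n$ large. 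Setting $s=\pi^{\gamma_+}_\Gamma(\hat X_{t_{k+1}})\in\Gamma$ and $F=F^{\gamma_+}(\hat X_{t_{k+1}})\le0$ (Proposition \ref{prop:gobet}), one has by definition $\hat X_{t_{k+1}}-s=F\gamma_+(s)$ and, by the correction rule of \eqref{eq-scheme}, $\overline X_{t_{k+1}}-s=-F\gamma_-(s)$, both points lying in $D_-$. A first-order Taylor expansion of $\tilde u^+$ and $\tilde u^-$ about $s$, together with $\tilde u^+(t_{k+1},s)=\tilde u^-(t_{k+1},s)$ (continuity of $\tilde u$ across $\Gamma$), yields
\[
 R_{k+1}=-F\bigl[\langle\nabla\tilde u_+(t_{k+1},s),\gamma_+(s)\rangle+\langle\nabla\tilde u_-(t_{k+1},s),\gamma_-(s)\rangle\bigr]+O(F^2),
\]
and the bracket is precisely the left-hand side of the transmission condition \eqref{eq:trans-gamma} for $\tilde u(t_{k+1},\cdot)$, hence vanishes. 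Therefore $|R_{k+1}|\le CF^2$ on the crossing event, and by the distance equivalence \eqref{eq:equiv-dist-tildeF} (and, if one prefers to measure $d(\overline X_{t_{k+1}},\Gamma)$, Proposition \ref{prop:geometrique}) we get $|R_{k+1}|\le C\,d(\hat X_{t_{k+1}},\Gamma)^2\,\1_{\{\text{step }k{+}1\text{ crosses }\Gamma\}}$.

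It remains to sum $\E[R_{k+1}]$. On a crossing step the segment $[\overline X_{t_k},\hat X_{t_{k+1}}]$ meets $\Gamma$, so $d(\hat X_{t_{k+1}},\Gamma)\le|\hat X_{t_{k+1}}-\overline X_{t_k}|-d(\overline X_{t_k},\Gamma)$; since, conditionally on $\cF_{t_k}$, the increment $\sigma(\overline X_{t_k})\Delta W_{k+1}$ is Gaussian with covariance $2a(\overline X_{t_k})h_n$ (whose largest eigenvalue is $\le Ch_n$) while $|\partial a(\overline X_{t_k})|h_n\le Ch_n$, a direct Gaussian tail estimate gives
\[
 \E\bigl[d(\hat X_{t_{k+1}},\Gamma)^2\,\1_{\{\text{crossing}\}}\mid\cF_{t_k}\bigr]\le C\,h_n\,\exp\!\Bigl(-\tfrac{d(\overline X_{t_k},\Gamma)^2}{C h_n}\Bigr).
\]
Hence $\sum_k\bigl|\E[R_{k+1}]\bigr|\le C\sum_{k}\E\bigl[h_n\,e^{-d(\overline X_{t_k},\Gamma)^2/(Ch_n)}\bigr]$. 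The remaining sum is controlled by the occupation-time bound
\[
 \sum_{k=0}^{n-1}h_n\,\P^{x_0}\bigl(d(\overline X_{t_k},\Gamma)\le\varepsilon\bigr)\le C(\varepsilon+\sqrt{h_n}),
\]
valid uniformly in $\varepsilon>0$, $x_0\in\R^d$ and $n$ large; writing $e^{-\delta^2/(Ch_n)}=\int_0^1\1_{\{\delta<\sqrt{Ch_n\log(1/s)}\}}\,ds$ and integrating the above bound produces a contribution of order $\sqrt{h_n}$. Proving this occupation-time estimate is, I expect, the main obstacle: it says the scheme spends an expected time of order $\varepsilon+\sqrt{h_n}$ in the $\varepsilon$-tube around $\Gamma$ --- a discrete local-time type control --- and one proves it using that each conditional Euler law is a non-degenerate Gaussian whose covariance has smallest eigenvalue $\ge2\lambda h_n$ (Assumption $(\mathbf E)$), which prevents the scheme from lingering near $\Gamma$; this is where the dependence of $K$ on $\lambda$ and $\Lambda$ enters. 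Combining the estimates on $\sum_k\E[\varepsilon_{k+1}]$, on $\sum_k\E[R_{k+1}]$, and on the negligible far-crossing event yields $\bigl|u(T,x_0)-\E^{x_0}u_0(\overline X^n_T)\bigr|\le K\sqrt{h_n}$ for all $n$ large, with $K=K(d,\lambda,\Lambda,u_0,T)$.
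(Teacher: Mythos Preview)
Your overall architecture is the same as the paper's: telescope along the grid, identify a ``smooth Euler'' piece that is $O(h_n^2)$ thanks to $\partial_t\tilde u+\cL\tilde u=0$, and a ``crossing'' piece whose first-order part is killed by the transmission condition \eqref{eq:trans-gamma}. Your decomposition via the smooth extensions $\tilde u^\pm$ is a bit cleaner than the paper's: the paper splits first into time and space increments $T_k,S_k$, then expands $S_k$ over the four events $\Omega_k^{\pm\pm}$, and only afterwards isolates the crossing residuals $\cR_k^{(1)},\cR_k^{(2)}$ and expands around $\pi^{\gamma_\pm}_\Gamma(\hat X_{t_{k+1}})$; your route packages the same cancellation more directly. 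The Taylor computation around $s=\pi^{\gamma_+}_\Gamma(\hat X_{t_{k+1}})$ and the use of \eqref{eq:trans-gamma} to kill the $O(F)$ term match the paper's Subsection~\ref{ssec:cancel-tc} exactly.

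There is, however, a genuine gap at the step you yourself flag as ``the main obstacle''. Both your argument and the paper's reduce to bounding
\[
h_n\sum_{k=0}^{n-1}\E^{x_0}\!\Big[\exp\!\Big(-c\,\frac{d(\overline X_{t_k},\Gamma)^2}{h_n}\Big)\Big]\le K\sqrt{h_n},
\]
which is the paper's Lemma~\ref{lemme:control-couronne}. Your proposed route---prove $\sum_k h_n\,\P(d(\overline X_{t_k},\Gamma)\le\varepsilon)\le C(\varepsilon+\sqrt{h_n})$ from the one-step Gaussian non-degeneracy and then layer-cake---does not close. A one-step bound of the form $\P(d(\overline X_{t_k},\Gamma)\le\varepsilon\mid\cF_{t_{k-1}})\le C\varepsilon/\sqrt{h_n}$ only yields $\sum_k h_n\,\P(\cdot)\le CT\varepsilon/\sqrt{h_n}$, which after your integration gives an $O(1)$ bound, not $O(\sqrt{h_n})$. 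What is needed is an \emph{accumulated} local-time control, not a one-step density bound. The paper obtains it (Lemma~\ref{lemme:control-couronne}) by passing from $\overline X$ to $\hat X$ via Proposition~\ref{prop:geometrique}, applying It\^o to $\exp(-2c'|\tilde F^{\gamma_\pm}(\hat X^n_t)|^2/h_n)$ on each $[t_i,t_{i+1}]$, using the quadratic-variation lower bound \eqref{eq:minoration-crochet} (this is where $(\mathbf E)$ enters) together with the occupation-times formula for the one-dimensional semimartingale $\tilde F^{\gamma_\pm}(\hat X^n)$, and finally telescoping the Tanaka representation of its local time increments so that the sum over $k$ remains bounded uniformly in $n$. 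None of this is a consequence of the one-step Gaussian law, and without it your bound on $\sum_k|\E[R_{k+1}]|$ does not reach $\sqrt{h_n}$.
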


\begin{remark}
\label{rem:cond-reg-u}
In this theorem the assumptions on $a(x)$ and $\Gamma$ involving the integers $m'$ and $k$ are here in order to use
Corollary \ref{cor:control-der}, which ensures that we will have $
\sup_{t\in[0,T],\,x\in \bar{D}_\pm}|\partial^j_t\partial^\alpha u_\pm(t,x)|<\infty
$
for any $j\leq 2$ and any $|\alpha|\leq 4$. This control on the derivatives on $u$ is what we need in order to lead our convergence proof. In fact if there is a way to get this control under weaker assumptions on $a(x)$ and $\Gamma$, this will lead to a convergence theorem stated under these weaker assumptions.
\end{remark}

\subsection{Preliminary results}

\begin{lemma}\label{lemme:Ito-control}(see \cite{Bossy-et-al-2004})
Consider an It\^o process with uniformly bounded coefficients $dU_t = b_t dt + \sigma_t dW_t$ on $(\Omega, {\cal F}, \P^{x_0})$. 
There exist some constants $c>0$ and $K$ (depending on $p\geq 1$, $T$ and the bounds on $\sigma$, $b$) such that, for any stopping times $S$ and $S'$ (with $0\leq S\leq S'\leq \delta\leq T$) and any $\eta\geq 0$,
\begin{align}
\P^{x_0} \croc{\sup_{t\in [S,S']}|U_t - U_s|\geq \eta}&\leq K \exp\pare{-c\frac{\eta^2}{\delta}}~;\\
\E^{x_0}\croc{\sup_{t\in [S,S']}|U_t - U_s|^p}&\leq K \delta^{p/2}.
\end{align}
\end{lemma}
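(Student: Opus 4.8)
The plan is to split $U_t-U_S$ into its drift and martingale parts, control the drift pathwise and the martingale part by a Bernstein-type exponential inequality (reducing the random endpoints $S,S'$ to a fixed-time supremum), and then obtain the moment bound by integrating the resulting tail estimate. Throughout I interpret the subscript in $U_s$ as $U_S$, the value at the stopping time $S$, and I let $M_b$ and $M_\sigma$ denote uniform bounds for $|b_s|$ and $\|\sigma_s\|$.

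First I would write, for $t\in[S,S']$, $U_t-U_S=\int_S^t b_s\,ds+\int_S^t\sigma_s\,dW_s=:I_t+N_t'$ and control the drift pathwise: $\sup_{t\in[S,S']}|I_t|\le M_b(S'-S)\le M_b\delta$. To handle the random endpoints in the stochastic term I would introduce the continuous martingale $N_t=\int_0^t\mathbf{1}_{(S,S']}(s)\,\sigma_s\,dW_s$, whose integrand is adapted because $S$ and $S'$ are stopping times; since $N$ vanishes for $t\le S$ and is frozen at $N_{S'}$ for $t\ge S'$, one has $\sup_{t\in[S,S']}|N_t'|=\sup_{t\ge0}|N_t|$. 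Crucially, its bracket is bounded by a deterministic constant, $\langle N\rangle_t=\int_0^t\mathbf{1}_{(S,S']}(s)\|\sigma_s\|^2\,ds\le M_\sigma^2\delta$ almost surely.

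The core step is the exponential inequality for $N$. For $\lambda>0$ the Dol\'eans--Dade exponential $Z_t=\exp(\lambda N_t-\tfrac{\lambda^2}2\langle N\rangle_t)$ is a nonnegative local martingale, hence a supermartingale with $Z_0=1$. Writing $\exp(\lambda N_t)\le Z_t\exp(\tfrac{\lambda^2}2 M_\sigma^2\delta)$ and applying Doob's maximal inequality for nonnegative supermartingales, $\P^{x_0}[\sup_t Z_t\ge a]\le 1/a$, then optimising at $\lambda=\eta/(M_\sigma^2\delta)$ gives $\P^{x_0}[\sup_t N_t\ge\eta]\le\exp(-\eta^2/(2M_\sigma^2\delta))$; applying the same to $-N$ yields $\P^{x_0}[\sup_t|N_t|\ge\eta]\le 2\exp(-\eta^2/(2M_\sigma^2\delta))$. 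Combining with the drift bound, $\{\sup_{[S,S']}|U_t-U_S|\ge\eta\}\subseteq\{\sup_t|N_t|\ge\eta-M_b\delta\}$, so on the range $\eta\ge 2M_b\delta$ (where $\eta-M_b\delta\ge\eta/2$) I obtain $\P^{x_0}[\sup|U_t-U_S|\ge\eta]\le 2\exp(-\eta^2/(8M_\sigma^2\delta))$. On the complementary range $\eta<2M_b\delta$ one has $\eta^2/\delta<4M_b^2\delta\le 4M_b^2T$, so $\exp(-c\eta^2/\delta)$ is bounded below by a positive constant there; hence, choosing $c\le 1/(8M_\sigma^2)$ and enlarging $K$, the bound $K\exp(-c\eta^2/\delta)\ge 1$ dominates the trivial estimate $\P\le1$. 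This establishes the first inequality for all $\eta\ge0$, with $c,K$ depending only on $T$, $M_b$ and $M_\sigma$. For the moment bound I would integrate the tail, $\E^{x_0}[\sup_{[S,S']}|U_t-U_S|^p]=\int_0^\infty p\eta^{p-1}\P^{x_0}[\sup|U_t-U_S|\ge\eta]\,d\eta\le K\int_0^\infty p\eta^{p-1}\exp(-c\eta^2/\delta)\,d\eta$, and the substitution $\eta=\sqrt{\delta/c}\,u$ turns this into $K(\delta/c)^{p/2}\,p\int_0^\infty u^{p-1}e^{-u^2}\,du=K'\delta^{p/2}$, with $K'$ depending only on $p,c,K$. (Alternatively the martingale part could be handled by Burkholder--Davis--Gundy, $\E^{x_0}[\sup_t|N_t|^p]\le C_p\E^{x_0}[\langle N\rangle^{p/2}]\le C_pM_\sigma^p\delta^{p/2}$, and the drift by $(M_b\delta)^p\le M_b^pT^{p/2}\delta^{p/2}$ using $\delta\le T$, combined via $(a+b)^p\le 2^{p-1}(a^p+b^p)$.)

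The main obstacle I anticipate is the exponential inequality itself: one must verify that the Dol\'eans--Dade exponential is a genuine supermartingale (nonnegativity together with Fatou suffices, with no Novikov-type hypothesis needed) and that the bracket bound $M_\sigma^2\delta$ is deterministic, so that the optimisation over $\lambda$ is uniform in $\omega$. The remaining bookkeeping --- reducing the stopping-time interval to a fixed-time supremum through the auxiliary martingale $N$, and covering the small-$\eta$ regime so the estimate is valid for every $\eta\ge0$ --- is routine, but must be carried out carefully so that $c$ and $K$ depend only on $p$, $T$ and the bounds on $b$ and $\sigma$.
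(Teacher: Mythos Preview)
The paper does not give its own proof of this lemma; it is quoted from \cite{Bossy-et-al-2004} without argument, so there is nothing to compare against. Your proof is correct and is the standard Bernstein-type argument for such estimates: isolate the bounded drift, replace the stochastic integral over the random interval $[S,S']$ by the fixed-time martingale $N_t=\int_0^t\indi{(S,S']}(s)\,\sigma_s\,dW_s$ with deterministically bounded bracket $\langle N\rangle_\infty\le M_\sigma^2\delta$, apply the Dol\'eans--Dade exponential together with Doob's maximal inequality for nonnegative supermartingales, optimise in $\lambda$, and cover the small-$\eta$ range by enlarging $K$; the $L^p$ bound then follows by integrating the tail (or, as you note, directly via Burkholder--Davis--Gundy). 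One small addendum: in the applications later in the paper the process $U$ is $\R^d$-valued, so the exponential inequality should be applied componentwise and combined by a union bound over the $d$ coordinates, which only affects the constants $c$ and $K$.
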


We have when $\overline{X}_{t_k}\in D_+$
\begin{align*}
\overline{X}_{t_{k+1}}=\hat{X}_{t_{k+1}} + \croc{F^{\gamma_+}(\hat{X}_{t_{k+1}})}^{-}\pare{\gamma_+(\pi^{\gamma_+}_\Gamma(\hat{X}_{t_{k+1}})) + \gamma_{-}(\pi^{\gamma_+}_\Gamma(\hat{X}_{t_{k+1}}))}
\end{align*}
and when $X_{t_k}\in D_-$
\begin{align*}
\overline{X}_{t_{k+1}}=\hat{X}_{t_{k+1}} + \croc{F^{\gamma_-}(\hat{X}_{t_{k+1}})}^{-}\pare{\gamma_+(\pi^{\gamma_-}_\Gamma(\hat{X}_{t_{k+1}})) + \gamma_{-}(\pi^{\gamma_-}_\Gamma(\hat{X}_{t_{k+1}}))}
\end{align*}

This shows that $(\overline{X}_{t})_{0\leq t\leq T}$ behaves like a continuous semimartingale on each of the intervals $[t_k,t_{k+1})$. Using Tanaka's formula, we have -- for example for $\overline{X}_{t_k}\in D_+$ -- that for any $t\in [t_k,t_{k+1})$,
\begin{align}
\label{eq:differentielle-schema}
d\overline{X}_t &= d\hat{X}_t + \frac{1}{2}\pare{\gamma_+ + \gamma_-}(\hat{X}_t)dL_t^0(F^{\gamma_+}(\hat{X}))\nonumber\\
&\quad + \croc{F^{\gamma_+}(\hat{X}_{t})}^{-}\Big(\nabla \pare{\gamma_+ + \gamma_-}(\hat{X}_t)d\hat{X}_t\nonumber \\
&\hspace{3cm}+ \frac{1}{2}{\rm Tr}\croc{\mathbf{H}[\gamma_+ + \gamma_-](\hat{X}_{t})a(\overline{X}_{t_k)})}dt\Big)\nonumber\\
&\quad - \indi{F^{\gamma_+}(\hat{X}_t)< 0}\Big[\nabla\pare{\gamma_+ + \gamma_-}(\hat{X}_t)a(\overline{X}_{t_k})\pare{\nabla F^{\gamma_+}(\hat{X}_t)}^\ast dt\nonumber \\
&\hspace{3cm}+ \pare{\gamma_+ + \gamma_-}(\hat{X}_t)\nabla F^{\gamma_+}(\hat{X}_t)d\hat{X}_t\Big]\nonumber\\
&\quad\quad - \indi{F^{\gamma_+}(\hat{X}_t)< 0}\pare{\gamma_+ + \gamma_-}(\hat{X}_t) \frac{1}{2}{\rm Tr}\croc{\mathbf{H}[F^{\gamma_+}](\hat{X}_t)a(\overline{X}_{t_k})}dt.
\end{align}

\begin{lemma}
\label{lemme:control-couronne}
Under the assumptions of Theorem \ref{thm-conv-schema}, for all $c>0$, there exists a constant $K(T)$ such that
\begin{equation}
h_n\;\E^{x_0}\sum_{i=0}^{n-1}\croc{\exp\pare{-c\frac{d^2(\overline{X}^n_{t_i}, \Gamma)}{h_n}}}\leq K(T) \sqrt{h_n}.
\end{equation}
\end{lemma}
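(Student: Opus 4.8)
The aim is to bound the time-averaged probability that the scheme sits close to $\Gamma$. The natural strategy is: first establish a pointwise-in-$i$ estimate of the form
$$
\E^{x_0}\croc{\exp\pare{-c\frac{d^2(\overline{X}^n_{t_i},\Gamma)}{h_n}}}\leq K\pare{\P^{x_0}\croc{d(\overline{X}^n_{t_i},\Gamma)\leq\sqrt{h_n}} + \sqrt{h_n}},
$$
or more usefully a bound that, once summed against $h_n$, telescopes or compares to $\sqrt{h_n}$. Concretely, I would try to show $\E^{x_0}\croc{\exp(-c\,d^2(\overline{X}^n_{t_i},\Gamma)/h_n)}\leq K\sqrt{h_n}/\sqrt{t_i}$ or, failing a clean dependence on $t_i$, I would aim directly for the bound
$$
h_n\sum_{i=0}^{n-1}\E^{x_0}\croc{\exp\pare{-c\frac{d^2(\overline{X}^n_{t_i},\Gamma)}{h_n}}}\leq K(T)\sqrt{h_n}
$$
by recognizing the left side as (up to constants) $\E^{x_0}$ of a Riemann-sum approximation of $\int_0^T \id{\{d(\overline X^n_s,\Gamma)\lesssim \sqrt{h_n}\}}\,ds$ type quantity, and controlling the occupation time of a $\sqrt{h_n}$-tube around $\Gamma$.

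\textbf{Step 1: reduce the exponential to a tube indicator.} For each $i$, split according to whether $d(\overline{X}^n_{t_i},\Gamma)\leq M\sqrt{h_n}\log(1/h_n)$ or not. On the far event the exponential is bounded by $h_n^{cM^2}$, which is summable against $h_n$ to give $o(\sqrt{h_n})$ for $M$ large. So it suffices to bound $h_n\sum_i \P^{x_0}\croc{d(\overline{X}^n_{t_i},\Gamma)\leq \rho_n}$ with $\rho_n = M\sqrt{h_n}\log(1/h_n)$ (the extra log is harmless for the final $\sqrt{h_n}$ rate, since $\sqrt{h_n}\log(1/h_n)^{2}\to 0$ can be absorbed, or one keeps track and gets $\sqrt{h_n}$ up to logs — but since the theorem claims exactly $K\sqrt{h_n}$ I would instead iterate the tail estimate of Lemma~\ref{lemme:Ito-control} more carefully, using $\P^{x_0}[\sup_{[t_i,t_{i+1}]}|\hat X_t - \overline X_{t_i}|\geq \eta]\leq K e^{-c\eta^2/h_n}$, to integrate the Gaussian-type tail against $d(\cdot,\Gamma)$ directly rather than truncating).

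\textbf{Step 2: control the occupation of the tube around $\Gamma$.} This is the heart of the matter. The process $(\overline X^n_{t_i})$ is, between grid points, an Itô process with uniformly bounded coefficients (by the recalled differential \eqref{eq:differentielle-schema} together with Proposition~\ref{prop:gobet} which gives $C^4_b$ bounds on $\pi_\Gamma^{\gamma_\pm}$, $F^{\gamma_\pm}$, $\gamma_\pm$, and boundedness of $\sigma$, $\partial a$). The key geometric input is that $d(\cdot,\Gamma)$ is comparable to $|F^\nu|$ which is a $C^4_b$ function (Proposition~\ref{prop:gobet}, items 3(b) and 4), and $F^\nu$ has non-degenerate gradient near $\Gamma$. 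So I would introduce $\rho_t := F^\nu(\overline X^n_t)$ (or $\tilde F^{\gamma_+}$), which is a real-valued Itô process whose diffusion coefficient is bounded below by $c\,\lambda/\Lambda>0$ uniformly in a neighborhood of $\Gamma$ — here I use ellipticity $(\mathbf E)$ and $|\nabla F^\nu|\geq c$ near $\Gamma$ — and the occupation-time formula / a local-time argument: the expected time $\rho_t$ spends in $[-\rho_n,\rho_n]$ over $[0,T]$ is $\E^{x_0}\int_0^T \id{|\rho_s|\leq\rho_n}ds \leq \frac{1}{c}\E^{x_0}\int_{\R}\id{|a|\leq\rho_n}L^a_T(\rho)\,da\leq \frac{2\rho_n}{c}\sup_a \E^{x_0} L^a_T(\rho)\leq K\rho_n$, since the local times of a non-degenerate bounded-coefficient semimartingale have uniformly bounded expectation. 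Taking $\rho_n\asymp\sqrt{h_n}$ gives exactly the $\sqrt{h_n}$ order. The jumps of $\overline X^n$ at grid points (the projection corrections) only occur when the Euler step has crossed $\Gamma$, and by Proposition~\ref{prop:geometrique} the corrected point is within a bounded factor $c_2$ of the distance of $\hat X_{t_{k+1}}$ to $\Gamma$, so the correction does not create spurious occupation of the tube — one checks the jump sizes are controlled by $[F^{\gamma_\pm}(\hat X_{t_{k+1}})]^-$, which is itself dominated by the Brownian increment over the step.

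\textbf{Main obstacle.} The delicate point is that $(\overline X^n_t)$ is not a single Itô process over $[0,T]$: it is only piecewise Itô, with deterministic jumps at the grid times, and its coefficients are frozen at $\overline X_{t_k}$ on each interval. So one cannot directly apply the occupation-time/local-time machinery to $F^\nu(\overline X^n_\cdot)$ globally; instead I would handle the diffusive part on each interval $[t_i,t_{i+1}]$ via the Gaussian tail bound of Lemma~\ref{lemme:Ito-control} (getting a per-step contribution $\lesssim \sqrt{h_n}$ when $\overline X_{t_i}$ is already $O(\sqrt{h_n})$ from $\Gamma$, i.e. $\P[\text{near }\Gamma\text{ at }t_{i+1}\mid \overline X_{t_i}]\leq K\min(1,\rho_n/d(\overline X_{t_i},\Gamma)+\sqrt{h_n})$-type estimates), and sum — the summation over $i$ of these conditional probabilities must be shown to telescope into the occupation time of the tube by the "skeleton" chain, which is where the non-degeneracy of the normal component of the diffusion and a discrete Girsanov or direct Gaussian computation of the one-step transition density transverse to $\Gamma$ is needed. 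Establishing that the discrete-time sum $h_n\sum_i \P^{x_0}[d(\overline X_{t_i},\Gamma)\leq\rho_n]$ is $O(\rho_n)$ uniformly in $n$ — rather than merely $O(1)$ — is the technical crux, and I expect it follows exactly as in \cite{Bossy-et-al-2004}, by comparing with the genuine diffusion or by a direct induction on $i$ using the one-step transverse transition estimate.
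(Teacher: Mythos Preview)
Your overall strategy --- occupation time/local time for a scalar process measuring the signed distance to $\Gamma$, with non-degeneracy of its quadratic variation coming from ellipticity $(\mathbf{E})$ --- is the same as the paper's. But your execution differs in two ways that matter, and one of them is precisely the obstacle you flag without resolving.

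\textbf{The paper switches from $\overline{X}$ to $\hat{X}$ before doing any stochastic calculus.} You try to run the local-time argument on $F^\nu(\overline{X}^n_\cdot)$ and then run into the piecewise-It\^o/jump problem. The paper avoids this entirely: by Proposition~\ref{prop:geometrique} and \eqref{eq:equiv-dist-tildeF}, one has $d(\overline{X}_{t_{i+1}},\Gamma)\geq \tfrac{1}{c_1c_2}|\tilde F^{\gamma_\pm}(\hat X_{t_{i+1}})|$, so it suffices to bound $\E^{x_0}\exp\big(-c'|\tilde F^{\gamma_+}(\hat X_{t_{i+1}})|^2/h_n\big)$. Now on each $[t_i,t_{i+1}]$ the process $\hat X$ is a genuine It\^o process with frozen (hence bounded) coefficients, and there is no correction jump to worry about. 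The global control comes from telescoping Tanaka across the intervals (see below), not from any property of $\overline{X}$ as a global semimartingale.

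\textbf{The paper keeps the Gaussian and never truncates to an indicator.} Your Step~1 would cost a logarithm. Instead, set $g(x)=\exp(-2c'x^2/h_n)$, apply It\^o to $g\big(\tilde F^{\gamma_+}(\hat X_t)\big)$ on $[t,t_{i+1}]$, use $|g|+\sqrt{h_n}|g'|+h_n|g''|\leq K\exp(-c'x^2/h_n)$, and then \emph{integrate the resulting inequality over $t\in[t_i,t_{i+1}]$}. This kills the $1/h_n$ coming from $g''$ and yields
\[
h_n\,\E^{x_0}\Big[g\big(\tilde F^{\gamma_+}(\hat X_{t_{i+1}})\big)\Big]\leq K\int_{t_i}^{t_{i+1}}\E^{x_0}\exp\Big(-c'\,\tilde F^{\gamma_+}(\hat X_s)^2/h_n\Big)\,ds.
\]
Now the occupation-time formula for $\tilde F^{\gamma_+}(\hat X_\cdot)$ on $[t_i,t_{i+1}]$, combined with the quadratic-variation lower bound $d\langle\tilde F^{\gamma_+}(\hat X)\rangle_s\geq\lambda\,ds$ (this is exactly your non-degeneracy computation, done via \eqref{eq:gradient-tildeF} and Cauchy--Schwarz), turns the right side into $\int_{-R}^R e^{-c'y^2/h_n}\,\E^{x_0}[\Delta_i^{i+1}L^y]\,dy$. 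Summing over $i$, Tanaka gives $\sum_i\E^{x_0}[\Delta_i^{i+1}L^y]\leq K(T)$ uniformly in $y$ because the positive-part terms telescope and the drift contributions are $O(h_n)$ per step. The Gaussian integral then supplies the $\sqrt{h_n}$.

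So the missing idea is the passage $\overline{X}\to\hat{X}$ via Proposition~\ref{prop:geometrique}, which dissolves your ``main obstacle'' in one line; and the integrate-It\^o-over-$t$ trick, which replaces your indicator truncation and gives the clean $\sqrt{h_n}$ without logs.
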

\begin{proof}
The idea is to use the occupation times formula. Using successively \eqref{eq:equiv-dist-tildeF} and the inequality \eqref{eq:equiv-dist-transformation} of Proposition \ref{prop:geometrique}, we have
$d\pare{\overline{x}, \Gamma} \geq \frac{1}{c_2}d\pare{\hat{x}, \Gamma} \geq \frac{1}{c_1c_2}|\tilde{F}^{\gamma_{\pm}}(\hat{x})|$ so that
\begin{align}
{\cal A}_{i+1}&:= \E^{x_0}\croc{\exp\pare{-c\frac{d^2(\overline{X}^n_{t_{i+1}}, \Gamma)}{h_n}}}\nonumber\\
&\leq \E^{x_0}\croc{\exp\pare{-c\frac{\left |\tilde{F}^{\gamma_+}(\hat{X}^n_{t_{i+1}})\right |^2}{|c_1c_2|^2 h_n}}\indi{\overline{X}^n_{t_{i+1}}\in D_-}}\nonumber \\
&\hspace{2cm}+ \E^{x_0}\croc{\exp\pare{-c\frac{\left |\tilde{F}^{\gamma_-}(\hat{X}^n_{t_{i+1}})\right |^2}{|c_1c_2|^2 h_n}}\indi{\overline{X}^n_{t_{i+1}}\in D_+}}\nonumber\\
&\quad := {\cal A}_{i+1}^+ + {\cal A}_{i+1}^-.
\end{align}
We concentrate on term ${\cal A}_{i+1}^+$ as both terms are treated in a similar manner.

Set $c'=c/2c_1^2c_2^2>0$ and $g(x) = \exp(-2c'x^2/h)$; it is easy to check that $|g(x)| + \sqrt{h}|g'(x)| + h |g''(x)| \leq K(T)\exp(-c' x^2 /h)$. Hence, for $t\in [t_i, t_{i+1}]$, It\^o's formula yields that
\begin{align*}
&\E^{x_0}\exp\pare{-2c'\frac{\left |\tilde{F}^{\gamma_+}(\hat{X}^n_{t_{i+1}})\right |^2}{h_n}}\\
&\leq K(T) \Big[\E^{x_0}\exp\big(-c'\frac{\left |\tilde{F}^{\gamma_+}(\hat{X}^n_{t})\right |^2}{h_n}\big) \\
&\hspace{2cm}+  \frac{1}{h_n}\int_t^{t_{i+1}}\,ds\,\E^{x_0}\exp\big(-c'\frac{\left |\tilde{F}^{\gamma_+}(\hat{X}^n_{s})\right |^2}{h_n}\big)\Big ].
\end{align*}
We integrate this inequality with respect to $t$ over $[t_i, t_{i+1}]$ to get
\begin{equation}
h_n\,{\cal A}^+_{i+1} \leq K(T)\int_{t_i}^{t_{i+1}} \,ds\,\E^{x_0}\exp\pare{-c'\frac{\left |\tilde{F}^{\gamma_+}(\hat{X}^n_{s})\right |^2}{h_n}}.
\end{equation}
(for possibly some new constant $K(T)$).

Observe that from \eqref{eq:gradient-tildeF},
\begin{align}
\label{eq:minoration-crochet}
d\langle \tilde{F}^{\gamma_+}(\hat{X}^n), \tilde{F}^{\gamma_+}(\hat{X}^n)\rangle_s &= \nabla \tilde{F}^{\gamma_+}(\hat{X}^n_{s})a(\overline{X}^n_{t_i})\croc{\nabla \tilde{F}^{\gamma_+}(\hat{X}^n_{s})}^\ast\,ds\geq
\lambda ds.
\end{align} 
Indeed, using the Cauchy-Schwarz inequality and $|\nu(\hat{x})|= 1$, we have that 
\begin{align*}
\nabla \tilde{F}^{\gamma_+}(\hat{x})a(\overline{x})\croc{\nabla \tilde{F}^{\gamma_+}(\hat{x})}^\ast &= \frac{\nu^\ast(\hat{x}) a(\overline{x}) \nu(\hat{x})}{\langle\nu(\hat{x}), \tilde{\gamma}_+(\hat{x})\rangle^2}= \frac{\langle\nu(\hat{x}), a(\overline{x})\nu(\hat{x})\rangle}{\langle \nu(\hat{x}), \frac{a(\hat{x})\nu(\hat{x})}{|a(\hat{x})\nu(\hat{x})|}\rangle^2}\\
&\geq \frac{\langle\nu(\hat{x}), a(\overline{x})\nu(\hat{x})\rangle}{|\nu(\hat{x})|^2 |a(\hat{x})\nu(\hat{x})|^2}|a(\hat{x})\nu(\hat{x})|^2 = \langle\nu(\hat{x}), a(\overline{x})\nu(\hat{x})\rangle \\
&\geq \lambda
\end{align*}
which justifies \eqref{eq:minoration-crochet}.

It readily follows from the occupation times formula that
\begin{equation}
h_n\,{\cal A}^+_{i+1} \leq K(T)\int_{-R}^R dy \exp\pare{-c'\frac{y^2}{h_n}}\E^{x_0}\croc{\triangle_{i}^{i+1}L^y\pare{\tilde{F}^{\gamma_+}(\hat{X}^n_{.})}}.
\end{equation}

Now,  
\begin{align*}
&\E^{x_0}\croc{L^y_{t_{i+1}}\pare{\tilde{F}^{\gamma_+}(\hat{X}^n_{.})} - L^y_{t_{i}}\pare{\tilde{F}^{\gamma_+}(\hat{X}^n_{.})}}\\
&= 2\E^{x_0}\Big[\pare{\tilde{F}^{\gamma_+}(\hat{X}^n_{t_{i+1}}) - y}^+ - \pare{\tilde{F}^{\gamma_+}(\hat{X}^n_{t_{i}}) - y}^+ \\
&\hspace{3cm}
- \int_{t_i}^{t_{i+1}}\indi{\tilde{F}^{\gamma_+}(\hat{X}^n_{s})\geq y} d\pare{\tilde{F}^{\gamma_+}(\hat{X}^n_{s})}\Big]\\
&\leq  2\E^{x_0}\croc{\pare{\tilde{F}^{\gamma_+}(\hat{X}^n_{t_{i+1}}) - y}^+ - \pare{\tilde{F}^{\gamma_+}(\hat{X}^n_{t_{i}}) - y}^+} + K(T) h_n.
\end{align*}

Therefore, $\sum_{i=0}^{n-1}\E^{x_0}\croc{L^y_{t_{i+1}}\pare{\tilde{F}^{\gamma_+}(\hat{X}^n_{.})} - L^y_{t_{i}}\pare{\tilde{F}^{\gamma_+}(\hat{X}^n_{.})}}\leq K(T)$ uniformly in $|y|\leq R$ since the sum is telescoping. We can thus conclude that $h_n\sum_{i=0}^{n-1}{\cal A}^+_{i+1}\leq K(T) \sqrt{h_n}$. 

The sum $h_n\sum_{i=0}^{n-1}{\cal A}^-_{i+1}$ is treated similarly. The proof of the Lemma is complete.
\end{proof}

\subsection{Error decomposition}

In all the sequel $x_0$ is arbitrarily fixed.

For all $0\leq k\leq n$ set 
$$ \theta^n_k := T-t^n_k. $$

The proof of Theorem~\ref{thm-conv-schema} proceeds as follows (we omit the superscript $n$). Since $u(0,x)=u_0(x)$ for all $x\in\R^d$ and 
$u(T,x_0)=\mathbb{E}^{x_0}u(T,\overline{X}_0)$, 
the discretization error at time $T$ can be decomposed as follows: 
\begin{equation} \label{error-decomposition}
\begin{split}
\epsilon^{x_0}_T &= \left|u(T,x_0)-\espx{u_0\pare{\overline{X}_T}}\right| \\
&= \Big |\sum_{k=0}^{n-1}\espx{u(T-t_k,\overline{X}_{t_k})}
-\espx{u(T-t_{k+1},\overline{X}_{t_{k+1}})}\Big |,
\end{split}
\end{equation}
and thus
\begin{equation} \label{detailed-error-decomposition}
\begin{split}
\epsilon^{x_0}_T &\leq \Big |\sum_{k=0}^{n-1}
\mathbb{E}^{x_0}\left\{u(\theta_k,\overline{X}_{t_k})
-u(\theta_{k+1},\overline{X}_{t_k})\right.\\
&\quad\quad\quad\quad \left.
+u(\theta_{k+1},\overline{X}_{t_k})
-u(\theta_{k+1},\overline{X}_{t_{k+1}}) \right\}\Big |. \\
\end{split}
\end{equation}

The rest of this section is devoted to the analysis of
$$  \left|\sum_{k=0}^{n-1}\mathbb{E}^{x_0}(T_k-S_k)\right|, $$
where the time increment $T_k$ is defined as
\begin{equation} \label{def:Tk}
T_k := u(\theta_k, \overline{X}_{t_k})-u(\theta_{k+1},\overline{X}_{t_k})
\end{equation}
and the space increment is defined as
\begin{equation} \label{def:Sk}
S_k:= u(\theta_{k+1},\overline{X}_{t_{k+1}})-u(\theta_{k+1},\overline{X}_{t_k}).
\end{equation}

\subsection{Estimate for the time increment  $T_k$}
Remember the definition~(\ref{def:Tk}) of $T_k$ and that $\theta_k=T-t_k$. We have
\begin{equation*}
\begin{split}
&\left\{u(\theta_k,\overline{X}_{t_k})
-u(\theta_{k+1},\overline{X}_{t_k})\right\}\indi{\overline{X}_{t_k}\in  D_+} \\
&\quad =
h_n\partial_tu(\theta_{k+1},\overline{Y}_{t_k})\indi{\overline{X}_{t_k}\in D_+}\\
&\hspace{2cm}+h_n^2\int_{[0,1]^2}\partial^2_{tt}u(\theta_{k+1}+\alpha_1\alpha_2h_n, \overline{X}_{t_k})\alpha_1~d\alpha_1d\alpha_2~\indi{\overline{X}_{t_k}\in D_+} \\
&\quad =: T_k^+ + R_k^+.
\end{split}
\end{equation*}

Similarly,
\begin{equation*}
\begin{split}
&\left\{u(\theta_k,\overline{X}_{t_k})
-u(\theta_{k+1},\overline{X}_{t_k})\right\}\indi{\overline{X}_{t_k}\in D_-}
\\
&\quad = h_n\partial_tu(\theta_{k+1},\overline{X}_{t_k})\indi{\overline{X}_{t_k}\in D_-}\\
&\hspace{2cm}+h_n^2\int_{[0,1]^2}\partial^2_{tt}u(\theta_{k+1}+\alpha_1\alpha_2h_n,\overline{X}_{t_k})\alpha_1~d\alpha_1d\alpha_2~\indi{\overline{X}_{t_k}\in D_-} \\
&\quad =: T_k^-+R_k^-.
\end{split}
\end{equation*}
In view of Corollary~\ref{cor:control-der} and Remark \ref{rem:cond-reg-u} we have
$$\mathbb{E}^{x_0}|R_k^++R_k^-| \leq C\,h_n^2. $$

From the preceding we deduce
\begin{equation} \label{ineq:Tk}
\mathbb{E}^{x_0}T_k = \mathbb{E}^{x_0}\partial_tu(\theta_{k+1},\overline{X}_{t_k})h_n
+O(h_n^2).
\end{equation}

\subsection{Expansion of the space increment $S_k$}
Let $S_k$ be defined as in~(\ref{def:Sk}). Set
\begin{align*}
\triangle_{k+1}\overline{X} &:= \overline{X}_{t_{k+1}}-\overline{X}_{t_k}\\
\triangle^\sharp_{k+1}\overline{X} &:=\hat{X}_{t_{k+1}}-\overline{X}_{t_k}.
\end{align*}
and recall that $\triangle_{k+1}W = W_{t_{k+1}}-W_{t_k}$.

\begin{proposition}
\label{prop:moment-schema}
\begin{equation}
\E^{x_0}|(\triangle^\sharp_{k+1}\overline{X})^{\alpha}|\leq C(\alpha) h_n^{|\alpha |/2}.
\end{equation}
\end{proposition}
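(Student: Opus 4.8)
The plan is to reduce the multi-index bound to a single scalar moment estimate and then quote Lemma~\ref{lemme:Ito-control}. First I would recall that, by the very definition of the scheme, for $t\in(t_k,t_{k+1}]$ the auxiliary process satisfies $\hat X_t=\overline X_{t_k}+\sigma(\overline X_{t_k})(W_t-W_{t_k})+\partial a(\overline X_{t_k})(t-t_k)$, so that $\triangle^\sharp_{k+1}\overline X=\hat X_{t_{k+1}}-\overline X_{t_k}$ is exactly the increment over $[t_k,t_{k+1}]$ of the It\^o process $(\hat X_t)_{t\in[t_k,t_{k+1}]}$ started from $\overline X_{t_k}$, whose drift $\partial a(\overline X_{t_k})$ and diffusion matrix $\sigma(\overline X_{t_k})$ are $\mathcal F_{t_k}$-measurable and, crucially, bounded by a deterministic constant that does not depend on $k$, $n$ or $x_0$: the entries of $a$ are bounded by $\Lambda$ (whence $\sigma$ is bounded in terms of $d$ and $\Lambda$), and the entries of $\partial a$ are bounded because $(a_\pm)_{ij}\in C^1_b(\bar D_\pm)$. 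Hence Lemma~\ref{lemme:Ito-control} applies on $[S,S']=[t_k,t_{k+1}]$ with $\delta=h_n$.

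Next I would dispose of the multi-index notation. Writing $\alpha=(i_1,\dots,i_d)$ and denoting by $(\triangle^\sharp_{k+1}\overline X)_\ell$ the $\ell$-th coordinate, one has
$$
\bigl|(\triangle^\sharp_{k+1}\overline X)^\alpha\bigr|=\prod_{\ell=1}^d\bigl|(\triangle^\sharp_{k+1}\overline X)_\ell\bigr|^{i_\ell}\le\prod_{\ell=1}^d\bigl|\triangle^\sharp_{k+1}\overline X\bigr|^{i_\ell}=\bigl|\triangle^\sharp_{k+1}\overline X\bigr|^{|\alpha|},
$$
since each coordinate is dominated by the Euclidean norm. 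Taking expectations and applying the second inequality of Lemma~\ref{lemme:Ito-control} with $p=|\alpha|$ gives $\E^{x_0}\bigl|(\triangle^\sharp_{k+1}\overline X)^\alpha\bigr|\le\E^{x_0}\bigl|\hat X_{t_{k+1}}-\overline X_{t_k}\bigr|^{|\alpha|}\le K\,h_n^{|\alpha|/2}$, which is the claim, with $C(\alpha)=K$ depending only on $|\alpha|$, $T$ and the (deterministic) bounds on $\sigma$ and $\partial a$, hence ultimately on $d$, $\lambda$, $\Lambda$ and the $C^1_b$-norms of $a$.

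As a self-contained alternative avoiding the lemma, I would condition on $\mathcal F_{t_k}$ and use that $\triangle_{k+1}W=W_{t_{k+1}}-W_{t_k}$ is independent of $\mathcal F_{t_k}$ and $\mathcal N(0,h_nI_d)$-distributed, so that $\bigl|\triangle^\sharp_{k+1}\overline X\bigr|\le\|\sigma\|_\infty|\triangle_{k+1}W|+\|\partial a\|_\infty h_n$; expanding $\bigl|\triangle^\sharp_{k+1}\overline X\bigr|^{|\alpha|}$ by the binomial formula and using the Gaussian moment identities $\E|\triangle_{k+1}W|^p=c_{p,d}\,h_n^{p/2}$ together with $h_n\le T$ again yields a bound of order $h_n^{|\alpha|/2}$. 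I do not expect any genuine obstacle here: the only two points that require a line of care are the passage from the multi-index monomial to a power of the norm and the verification that the frozen coefficients $\sigma(\overline X_{t_k})$ and $\partial a(\overline X_{t_k})$ admit bounds uniform in $k,n,x_0$, which is what makes the constant $C(\alpha)$ uniform.
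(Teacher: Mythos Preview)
Your proof is correct and follows exactly the paper's approach: the paper's own proof is a single sentence stating that the result follows from Lemma~\ref{lemme:Ito-control} combined with the elementary inequality $|(x)^\alpha|\le |x|^{|\alpha|}$ for $x\in\R^d$. Your write-up simply fleshes out these two ingredients (and adds a direct Gaussian-moment alternative), so there is nothing to add.
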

\begin{proof}
This is a consequence of the result of Lemma \ref{lemme:Ito-control} combined with the fact that
$|(x)^\alpha |\leq |x|^{|\alpha|}$ for any $x\in \R^d$.
\end{proof}

We emphasize that, due to the definition of our stochastic scheme, $\triangle^\sharp_{k+1}\overline{X}$ 
does not coincide with $\overline{X}_{t_{k+1}}-\overline{X}_{t_k}$
when $\overline{X}_{t_{k+1}}$ and $\overline{X}_{t_k}$ do not belong to the same region, which explains the two notations
$\triangle$ and $\triangle^\sharp$. 

We need to introduce the four following events:
\begin{equation}
\begin{cases}
\Omega_k^{++} &:= [\overline{X}_{t_k}\in \overline{D}_{+}~\text{and}~\hat{X}_{t_{k+1}}\in D_+], \\
\Omega_k^{--} &:= [\overline{X}_{t_k}\in D_-~\text{and}~\hat{X}_{t_{k+1}}\in \overline{D}_{-}], \\
\Omega_k^{+-} &:= [\overline{X}_{t_k}\in D_+~\text{and}~\hat{X}_{t_{k+1}}\in \overline{D}_{-}], \\
\Omega_k^{-+} &:= [\overline{X}_{t_k}\in \overline{D}_{-}~\text{and}~\hat{X}_{t_{k+1}}\in D_+] .
\end{cases}
\end{equation}

In view of the definition of our stochastic numerical scheme we have
$$ \text{On}~\Omega_k^{++},~\triangle_{k+1}\overline{X} = \triangle^\sharp_{k+1}\overline{X}. $$
Therefore
\begin{equation*}
\begin{split}
S_k\indi{\Omega_k^{++}}
&= \langle \triangle_{k+1}\overline{X}, \nabla_x
u(\theta_{k+1},\overline{X}_{t_k})\rangle~\indi{\Omega_k^{++}}\\
&+\dfrac{1}{2}\pare{\triangle_{k+1}\overline{X}}^\ast {\rm \bf H}[u](\theta_{k+1},\overline{X}_{t_k})\triangle_{k+1}\overline{X}
~\indi{\Omega_k^{++}}\\
&\quad +\sum_{|{\alpha}|= 3}\frac{1}{\alpha !}(\triangle_{k+1}\overline{X})^\alpha\frac{\partial^{3}u}{\partial x^{\alpha}}(\theta_{k+1},\overline{X}_{t_k})~\indi{\Omega_k^{++}}\\
&\quad + \int_0^1 d\xi \sum_{|{\alpha}|= 4}\frac{(1-\xi)^4}{\alpha !}(\triangle_{k+1}\overline{X})^\alpha\frac{\partial^{4}u}{\partial x^{\alpha}}(\theta_{k+1},\overline{X}_{t_k} + \xi\triangle_{k+1}\overline{X})~\indi{\Omega_k^{++}} \\
&=: S_k^{++1}+S_k^{++2}+S_k^{++3}+S_k^{++4}.
\end{split}
\end{equation*}
Similarly,
\begin{equation*}
\begin{split}
S_k\indi{\Omega_k^{--}} 
&= \langle \triangle_{k+1}\overline{X}, \nabla_x
u(\theta_{k+1},\overline{X}_{t_k})\rangle~\indi{\Omega_k^{--}}\\
&+\dfrac{1}{2}\pare{\triangle_{k+1}\overline{X}}^\ast {\rm \bf H}[u](\theta_{k+1},\overline{X}_{t_k})\triangle_{k+1}\overline{X}
~\indi{\Omega_k^{--}}\\
&\quad +\sum_{|{\alpha}|= 3}\frac{1}{\alpha !}(\triangle_{k+1}\overline{X})^\alpha\frac{\partial^{3}u}{\partial x^{\alpha}}(\theta_{k+1},\overline{X}_{t_k})~\indi{\Omega_k^{--}}\\
&\quad + \int_0^1 d\xi \sum_{|{\alpha}|= 4}\frac{(1-\xi)^4}{\alpha !}(\triangle_{k+1}\overline{X})^\alpha\frac{\partial^{4}u}{\partial x^{\alpha}}(\theta_{k+1},\overline{X}_{t_k} + \xi\triangle_{k+1}\overline{X})~\indi{\Omega_k^{--}} \\
&=: S_k^{--1}+S_k^{--2}+S_k^{--3}+S_k^{--4}.
\end{split}
\end{equation*}

We now use that $\Omega_k^{++}\cup\Omega_k^{--}=\Omega-(\Omega_k^{+-}\cup\Omega_k^{-+})$.
Notice that $\Omega_k^{+-}\cup\Omega_k^{-+}$ belongs to the $\sigma$-field generated by $(W_t)$
up to time $t_{k+1}$. In view of the first line of \eqref{eq-scheme} and the fact that ${\mathbb E}^{{\cal F}_{t_k}}\Delta W_{k+1} = 0$, we get
\begin{equation*}
\begin{split}
\mathbb{E}^{x_0}(S_k^{++1}+S_k^{--1})
&=  \frac{h_n}{2}\mathbb{E}^{x_0}\left[\langle \partial a(\overline{X}_{t_k}),\nabla_x u(\theta_{k+1},\overline{X}_{t_k})\rangle \right ]\\
&\quad -\mathbb{E}^{x_0}\left[\langle \triangle^\sharp_{k+1}\overline{X} ,
\nabla_x u(\theta_{k+1},\overline{X}_{t_k})\rangle ~\indi{\Omega_k^{+-}\cup\Omega_k^{-+}}\right].\\
&\quad 
\end{split}
\end{equation*}
Proceeding similarly and conditioning $(\triangle^\sharp_{k+1}\overline{X})^2$ 
w.r.t. the past of $(W_t)$ up to time $t_k$, we obtain
\begin{equation*}
\begin{split}
\mathbb{E}^{x_0}(S_k^{++2}+S_k^{--2})
&= \dfrac{1}{2}\mathbb{E}^{x_0}\left[{\rm Tr}[\sigma {\bf H}[u]\sigma^\ast](\theta_{k+1},\overline{X}_{t_k})\right]h_n \\
&\quad
-\dfrac{1}{2}\mathbb{E}^{x_0}\left[(\triangle^\sharp_{k+1}\overline{X})^\ast
{\bf H}[u](\theta_{k+1},\overline{X}_{t_k})\triangle^\sharp_{k+1}\overline{X}~\indi{\Omega_k^{+-}\cup\Omega_k^{-+}}\right]
\end{split}
\end{equation*}
and since $\mathbb{E}^{x_0}(\triangle_{k+1}W)^\alpha=0$ whenever $|\alpha|=3$,
\begin{equation*}
\begin{split}
\mathbb{E}^{x_0}(S_k^{++3}+S_k^{--3})
&= \sum_{|{\alpha}|= 3}\frac{1}{\alpha !}\mathbb{E}^{x_0}\croc{(\triangle^\sharp_{k+1}\overline{X})^\alpha\frac{\partial^{3}u}{\partial x^{\alpha}}(\theta_{k+1},\overline{X}_{t_k})}\\
&\quad
-\sum_{|{\alpha}|= 3}\frac{1}{\alpha !}\mathbb{E}^{x_0}\croc{(\triangle^\sharp_{k+1}\overline{X})^\alpha\frac{\partial^{3}u}{\partial x^{\alpha}}(\theta_{k+1},\overline{X}_{t_k})~\indi{\Omega_k^{+-}\cup\Omega_k^{-+}}}.
\end{split}
\end{equation*}

We have, combining the results of Corollary \ref{cor:control-der} and Proposition \ref{prop:moment-schema},
\begin{equation}
\left | \sum_{|{\alpha}|= 3}\frac{1}{\alpha !}\mathbb{E}^{x_0}\croc{(\triangle^\sharp_{k+1}\overline{X})^\alpha\frac{\partial^{3}u}{\partial x^{\alpha}}(\theta_{k+1},\overline{X}_{t_k})}\right | \leq C\;h_n^{3/2}.
\end{equation}

In addition, and for the same reasons, we have
$$ \mathbb{E}^{x_0}|S_k^{++4}+S_k^{--4}| \leq C\,h_n^2. $$
To summarize the calculations of this subsection, we have obtained
\begin{equation} \label{expansion-Sk}
\begin{split}
&\mathbb{E}^{x_0}S_k =\\
&\mathbb{E}^{x_0}\mathcal{L}u(\theta_{k+1},\overline{X}_{t_k})h_n +\mathbb{E}^{x_0}
\left[\pare{S_k-\langle \triangle^\sharp_{k+1}\overline{X} , \nabla_x u(\theta_{k+1},\overline{X}_{t_k})\rangle }
~\indi{\Omega_k^{+-}\cup\Omega_k^{-+}} \right ] \\
&\!\!\!- \mathbb{E}^{x_0}\Big[\Big(\dfrac{1}{2}(\triangle^\sharp_{k+1}\overline{X})^\ast
{\bf H}[u](\theta_{k+1},\overline{X}_{t_k})\triangle^\sharp_{k+1}\overline{X}\\
&+\sum_{|{\alpha}|= 3}\frac{1}{\alpha !}(\triangle^\sharp_{k+1}\overline{X})^\alpha\frac{\partial^{3}u}{\partial x^{\alpha}}(\theta_{k+1},\overline{X}_{t_k})\Big)~\indi{\Omega_k^{+-}\cup\Omega_k^{-+}}\Big]+O(h_n^{3/2}) \\
&=: \mathbb{E}^{x_0}\mathcal{L}u(\theta_{k+1},\overline{X}_{t_k})h_n
+ {\mathbb E}^{x_0}\mathcal{R}^{(1)}_k - \mathbb{E}^{x_0}\mathcal{R}^{(2)}_k+O(h_n^{3/2}).
\end{split}
\end{equation}
We now estimate the remaining terms $\mathbb{E}^{x_0}\mathcal{R}^{(1)}_k$ and $\mathbb{E}^{x_0}\mathcal{R}^{(2)}_k$.

\subsection{Control of the term $\mathbb{E}^{x_0}\mathcal{R}^{(1)}_k$. Expansion around a well chosen point in $\Gamma$} 
\label{subsec:developpement0}
On the event $\Omega_k^{+-}$ we have that $\overline{X}_{t_{k+1}}$ and $\overline{X}_{t_k}$ are close to $\Gamma$.
On this event, we also have that $\hat{X}_{t_{k+1}}\in D_-$ and $\overline{X}_{t_k}\in D_+$. 
Remember our definition of $(F^{\gamma_+}(x), \pi_{\Gamma}^{\gamma_+}(x))$ for $x\in D_-$.

\subsubsection{Decomposition of $\mathbb{E}^{x_0}\mathcal{R}^{(1)}_k$}

As the function $u$ is continuous across the surface $\Gamma$ at point $\pi_{\Gamma}^{\gamma_+}(x)$, we get
\begin{equation*}
\begin{split}
&\mathbb{E}^{x_0}\left(\pare{S_k-\langle \triangle^\sharp_{k+1}\overline{X} , \nabla_x u(\theta_{k+1},\overline{X}_{t_k})\rangle }~\indi{\Omega_k^{+-}}\right]\\
&= \mathbb{E}^{x_0}\big[\big(\big(u(\theta_{k+1},\overline{X}_{t_{k+1}}) - u(\theta_{k+1}, \pi_{\Gamma}^{\gamma_+}(\hat{X}_{t_{k+1}}))\big)\\
&\;\;+\pare{u(\theta_{k+1}, \pi_{\Gamma}^{\gamma_+}(\hat{X}_{t_{k+1}})) - u(\theta_{k+1},\overline{X}_{t_k})}\big)~\indi{\Omega_k^{+-}}\big]\\
&\quad\quad -\mathbb{E}^{x_0}\left[\langle \triangle^\sharp_{k+1}\overline{X} , \nabla_x u_+(\theta_{k+1},\pi_{\Gamma}^{\gamma_+}(\hat{X}_{t_{k+1}}))\rangle~\indi{\Omega_k^{+-}}\right] \\
&\quad\quad- \mathbb{E}^{x_0}\left[\langle  \triangle^\sharp_{k+1}\overline{X} , \nabla_x u(\theta_{k+1},\overline{X}_{t_k}) -\nabla_x u_+(\theta_{k+1},\pi_{\Gamma}^{\gamma_+}(\hat{X}_{t_{k+1}}))\rangle~\indi{\Omega_k^{+-}}\right]
\end{split}
\end{equation*}
so that
{\small
\begin{equation*}
\begin{split}
&\mathbb{E}^{x_0}\left(\pare{S_k-\langle \triangle^\sharp_{k+1}\overline{X} , \nabla_x u(\theta_{k+1},\overline{X}_{t_k})\rangle }~\indi{\Omega_k^{+-}}\right]\\
=& \left \{\mathbb{E}^{x_0}\left[\langle \overline{X}_{t_{k+1}} - \pi_{\Gamma}^{\gamma_+}(\hat{X}_{t_{k+1}}),\nabla_x u_-(\theta_{k+1},\pi_{\Gamma}^{\gamma_+}(\hat{X}_{t_{k+1}}))\rangle\indi{\Omega_k^{+-}}\right]\right .\\
&\quad\quad -\mathbb{E}^{x_0}\left[\langle \overline{X}_{t_{k}} - \pi_{\Gamma}^{\gamma_+}(\hat{X}_{t_{k+1}}),\nabla_x u_+(\theta_{k+1},\pi_{\Gamma}^{\gamma_+}(\hat{X}_{t_{k+1}})) \rangle~\indi{\Omega_k^{+-}}\right]\\
&\quad\quad \left . -\mathbb{E}^{x_0}\left[\langle \triangle^\sharp_{k+1}\overline{X} , \nabla_x u_+(\theta_{k+1},\pi_{\Gamma}^{\gamma_+}(\hat{X}_{t_{k+1}}))\rangle~\indi{\Omega_k^{+-}}\right]\right \}_{:=L_{k}^{+-1}}\\
&\quad\quad + \left \{ \int_0^1 d\xi \sum_{|{\alpha}|= 2}\frac{(1-\xi)^2}{\alpha !}\mathbb{E}^{x_0}\left[(\hat{X}_{t_{k+1}} - \pi_{\Gamma}^{\gamma_+}(\hat{X}_{t_{k+1}}))^\alpha\right .\right .\\
&\quad\quad\quad\quad \left .\times \frac{\partial^{\alpha}u}{\partial x^{\alpha}}(\theta_{k+1},\pi_{\Gamma}^{\gamma_+}(\hat{X}_{t_{k+1}}) + \xi(\hat{X}_{t_{k+1}} - \pi_{\Gamma}^{\gamma_+}(\hat{X}_{t_{k+1}})))~\indi{\Omega_k^{+-}}\right ]\\
&\quad\quad\quad - \int_0^1 d\xi \sum_{|{\alpha}|= 2}\frac{(1-\xi)^2}{\alpha !}\mathbb{E}^{x_0}\left[(\overline{X}_{t_{k}} - \pi_{\Gamma}^{\gamma_+}(\hat{X}_{t_{k+1}}))^\alpha\right .\\
&\quad\quad\quad\quad \left .\left .\times\frac{\partial^{\alpha}u}{\partial x^{\alpha}}(\theta_{k+1},\pi_{\Gamma}^{\gamma_+}(\hat{X}_{t_{k+1}}) + \xi(\overline{X}_{t_{k}} - \pi_{\Gamma}^{\gamma_+}(\hat{X}_{t_{k+1}})))~\indi{\Omega_k^{+-}}\right ]\right \}_{:= L_{k}^{+-2}}\\
&- \left \{ \mathbb{E}^{x_0}\left[\langle \triangle^\sharp_{k+1}\overline{X} , \nabla_x u_+(\theta_{k+1},\overline{X}_{t_k}) -\nabla_x u_+(\theta_{k+1},\pi_{\Gamma}^{\gamma_+}(\hat{X}_{t_{k+1}}))\rangle~\indi{\Omega_k^{+-}}\right]\right \}_{:=L_{k}^{+-3}}\\
=&L_{k}^{+-1} + L_{k}^{+-2} + L_{k}^{+-3}.
\end{split}
\end{equation*}
}
\subsubsection{Canceling the term $L_{k}^{+-1}$ using the transmission condition}
\label{ssec:cancel-tc}

Observe that due to the fact that
\begin{equation*}
\pare{\hat{X}_{t_{k}} - \pi_{\Gamma}^{\gamma_+}(\hat{X}_{t_{k+1}})} + \pare{\hat{X}_{t_{k+1}} - \hat{X}_{t_{k}}} = \hat{X}_{t_{k+1}} - \pi_{\Gamma}^{\gamma_+}(\hat{X}_{t_{k+1}}).
\end{equation*}
we have that
\begin{equation*}
\begin{split}
L_{k}^{+-1}& = \mathbb{E}^{x_0}\left[\left (\langle \overline{X}_{t_{k+1}} - \pi_{\Gamma}^{\gamma_+}(\hat{X}_{t_{k+1}}),\nabla_x u_-(\theta_{k+1},\pi_{\Gamma}^{\gamma_+}(\hat{X}_{t_{k+1}}))\rangle\right .\right .\\
&\quad\quad\quad\quad\left . \left .-\langle \hat{X}_{t_{k+1}} - \pi_{\Gamma}^{\gamma_+}(\hat{X}_{t_{k+1}}),\nabla_x u_+(\theta_{k+1},\pi_{\Gamma}^{\gamma_+}(\hat{X}_{t_{k+1}})) \rangle\right )~\indi{\Omega_k^{+-}}\right]\\
&=\mathbb{E}^{x_0}\left[F^{\gamma_+}(\hat{X}_{t_{k+1}})\left (\langle -\gamma_-(\pi_{\Gamma}^{\gamma_+}(\hat{X}_{t_{k+1}})),\nabla_x u_-(\theta_{k+1},\pi_{\Gamma}^{\gamma_+}(\hat{X}_{t_{k+1}}))\rangle \right .\right .\\
&\quad\quad\quad\quad\quad \quad\left . \left . -\langle \gamma_+(\pi_{\Gamma}^{\gamma_+}(\hat{X}_{t_{k+1}})),\nabla_x u_+(\theta_{k+1},\pi_{\Gamma}^{\gamma_+}(\hat{X}_{t_{k+1}})) \rangle\right )~\indi{\Omega_k^{+-}}\right]\\
&=0,
\end{split}
\end{equation*}
where we have used the vector problem solved by $(F^{\gamma_+}, \pi_{\Gamma}^{\gamma_+})$ and Equation \eqref{eq:trans-gamma} (i.e. the transmission condition $(\star)$ and the definition of $\gamma_\pm(x)$).
\medskip

\subsubsection{The term $L_k^{+-2}$}
We now turn to the term $L_{k}^{+-2}$.

The term $L_k^{+-2}$ is the sum of two terms. These two terms are treated similarly, so we concentrate only on the first.
Let $\alpha$ such that $|\alpha|= 2$. We have that 
\begin{equation*}
\begin{split}
\mathbb{E}^{x_0}\left[\left |(\overline{X}_{t_{k}} - \pi_{\Gamma}^{\gamma_+}(\hat{X}_{t_{k+1}}))^\alpha\right |~\indi{\Omega_k^{+-}}\right ]&\leq c_1 \mathbb{E}^{x_0}\left[|\overline{X}_{t_{k}} - \pi_{\Gamma}^{\gamma_+}(\hat{X}_{t_{k+1}})|^2~\indi{\Omega_k^{+-}}\right ]\\
&\leq c_2 \mathbb{E}^{x_0}\left[|\triangle^\sharp_{k+1} X|^2~\indi{\Omega_k^{+-}}\right ].
\end{split}
\end{equation*}
The same kind of treatment can be performed for the second term of $L_k^{+-2}$. Conditionning w.r.t ${\cal F}_{t_k}$ and applying the Cauchy-Schwarz inequality in the conditionnal expectation, we find using the result of Lemma \ref{lemme:Ito-control},
\begin{align*}
\begin{split}
|L_k^{+-2}| &\leq C\mathbb{E}^{x_0}\croc{{\mathbb E}^{{\cal F}_{t_k}}\left[\big|\triangle^\sharp_{k+1} X\big|^4\right ]^{1/2}~{\mathbb P}^{{\cal F}_{t_k}}\pare{\Omega_k^{+-}}^{1/2}}\\
&\leq C\,h_n ~{\mathbb E}^{x_0}{\mathbb P}^{{\cal F}_{t_k}}\pare{\Omega_k^{+-}}^{1/2}.
\end{split}
\end{align*}

\subsubsection{The term $L_k^{+-3}$}
For the term $L_k^{+-3}$, we may perform a Taylor's expansion to the term 
\begin{equation*}
\nabla_x u_+(\theta_{k+1},\overline{X}_{t_k}) -\nabla_x u_+(\theta_{k+1},\pi_{\Gamma}^{\gamma_+}(\hat{X}_{t_{k+1}})).
\end{equation*}
Using Corollary \ref{cor:control-der} and the Cauchy-Schwarz inequality, we find that
\begin{align}
|L_k^{+-3}| &\leq C\mathbb{E}^{x_0}\left[ \big|\triangle^\sharp_{k+1}\overline{X}\big| \big|\overline{X}_{t_k} - \pi_{\Gamma}^{\gamma_+}(\hat{X}_{t_{k+1}})\rangle \big| ~\indi{\Omega_k^{+-}}\right]\nonumber\\
&\leq C \mathbb{E}^{x_0}\left[\big|\triangle^\sharp_{k+1} X\big|^2~\indi{\Omega_k^{+-}}\right ].
\end{align}

Finally, as for the term $L_k^{+-2}$, we find that
\begin{equation*}
\begin{split}
|L_k^{+-3}| \leq C\,h_n {\mathbb E}^{x_0}{\mathbb P}^{{\cal F}_{t_k}}\pare{\Omega_k^{+-}}^{1/2}.
\end{split}
\end{equation*}

Using the same method for the other side $\Omega_k^{-+}$, we find that
\begin{equation*}
\begin{split}
\E^{x_0}{\cal R}_k^{(1)} \leq C\,h_n {\mathbb E}^{x_0}\pare{{\mathbb P}^{{\cal F}_{t_k}}\pare{\Omega_k^{+-}}^{1/2} + {\mathbb P}^{{\cal F}_{t_k}}\pare{\Omega_k^{-+}}^{1/2}}.
\end{split}
\end{equation*}

\subsection{Summing up}
The term $\mathbb{E}^{x_0}\mathcal{R}^{(2)}_k$ can be estimated using the same techniques used in the previous section and we omit the details.

Using now the fact that $\partial_t u - {\cal L}u =0$, we finally find that
\begin{equation}
\epsilon^{x_0}_T\leq C \,h_n\,{\mathbb E}^{x_0}\sum_{k=0}^{n-1} \pare{{\mathbb P}^{{\cal F}_{t_k}}\pare{\Omega_k^{+-}}^{1/2} + {\mathbb P}^{{\cal F}_{t_k}}\pare{\Omega_k^{-+}}^{1/2}} + C\sqrt{h_n}.
\end{equation} 
Observe -- using the result of Lemma \ref{lemme:Ito-control} -- that 
\begin{align*}
{\mathbb P}^{{\cal F}_{t_k}}\pare{\Omega_k^{+-}}^{1/2} &= {\mathbb P}^{{\cal F}_{t_k}}\pare{\overline{X}_{t_k}\in D_+, \hat{X}_{t_{k+1}}\in D_-}^{1/2}\\
&\leq {\mathbb P}^{{\cal F}_{t_k}}\pare{||\hat{X}_{t_{k+1}} - \overline{X}_{t_k}||\geq d\pare{\overline{X}_{t_k}, \Gamma}}^{1/2}\\
&\leq K(T)\exp\pare{-\frac{1}{2}\frac{d^2\pare{\overline{X}_{t_k}, \Gamma}}{h_n}}
\end{align*}
and the same kind of inequality holds true for ${\mathbb P}^{{\cal F}_{t_k}}\pare{\Omega_k^{-+}}^{1/2}$.

Finally,
\begin{align*}
\epsilon^{x_0}_T\leq K(T)\,h_n\,{\mathbb E}^{x_0}\sum_{k=0}^{n-1}\exp\pare{-\frac{1}{2}\frac{d^2\pare{\overline{X}_{t_k}, \Gamma}}{h_n}} + C\sqrt{h_n},
\end{align*}
and we conclude the proof of Theorem \ref{thm-conv-schema} using the result of Lemma \ref{lemme:control-couronne}
(note that if we sum up all the dependancies of our constants, we indeed have that $K$ in \eqref{eq-err-schema} depends on
$d$, $\lambda$, $\Lambda$, $u_0$ and~$T$).

\section{Numerical experiments}
\label{sec:num}

 In these examples $d=2$ and the domain $D$ is the open unit disc, i.e.,
$$
D=\{(x_1,x_2)\in\R:\,x_1^2+x_2^2<1\}.$$
Note that the boundary of $D$ is the unit circle $\partial D=\{(x_1,x_2)\in\R:\,x_1^2+x_2^2=1\}$.

The subdomains $D_+$ and $D_-$ are defined by
$$
D_+=\{(x_1,x_2)\in D\;\text{with}\;x_2>0\}\quad\text{and}\quad D_-=\{(x_1,x_2)\in D\;\text{with}\;x_2<0\},$$
so that the interface is $\Gamma=\{(x_1,0)\in \R^2:\,-1\leq x_1\leq 1\}$.

The diffusion matrix is defined by
$$
a(x)=a_+(x)\indi{x\in D_+} + a_-(x)\indi{x\in \bar{D}_-},$$
with 
$$
a_\pm(x)=P^\ast_\pm E_\pm(x)P_\pm$$
where $P_\pm$ are rotation (therefore orthogonal) matrices
$$
P_\pm=\left(\begin{array}{cc}
\cos(\theta_\pm)&-\sin(\theta_\pm)\\
\sin(\theta_\pm)& \cos(\theta_\pm)\\
\end{array}
\right)
$$
(for $\theta_\pm\in[0,2\pi)$), and $E_\pm(x)$ are diagonal matrix-valued functions
$$
E_\pm(x)=\left(\begin{array}{cc}
\lambda^1_\pm+\epsilon_\pm x_2&0\\
0&  \lambda^2_\pm+\epsilon_\pm x_2 \\
\end{array}
\right)$$
where $\lambda^1_\pm,\lambda^2_\pm>0$ and $\epsilon_\pm<\lambda^i_\pm$ for $i=1,2$. Note that this ensures that $a(x)$ satisfies the uniform ellipticity assumption $\mathbf{(E)}$.

We take $\theta_+=\frac{\pi}{4}$, $\theta_-=\frac{\pi}{3}$, $\lambda^1_+=1$, $\lambda^2_+=9$, $\lambda^1_-=2$, $\lambda^2_-=3$ ,
$\epsilon_+=0.5$ and $\epsilon_-=1.9$.
This gives
$$
a_+(x)=\frac 1 2
\left(\begin{array}{cc}
5+0.5x_2 & 4\\
4& 5+0.5x_2\\
\end{array}
\right),
\;\;
a_-(x)=\frac 1 2
\left(\begin{array}{cc}
\frac{11}{4}+1.9x_2 & \frac{\sqrt{3}}{4}\\
\frac{\sqrt{3}}{4} & \frac 9 4  + 1.9x_2\\
\end{array}
\right).
$$

\vspace{0.3cm}

\noindent
{\bf Performing our Transformed Euler Scheme.}

We have the Cholesky decompositions $2a_\pm(x)~=~\sigma_\pm\sigma_\pm^\ast(x)$, with
$$
\sigma_+(x)=
\left(\begin{array}{cc}
\sqrt{5+0.5x_2} & 0\\
4/\sqrt{5+0.5x_2} &\sqrt{5+0.5x_2 -16/(5+0.5x_2)} \\
\end{array}
\right)
$$
and
$$
\sigma_-(x)=
\left(\begin{array}{cc}
\sqrt{\frac{11}{4}+1.9x_2} & 0\\
\frac{\sqrt{3}}{4}/ \sqrt{\frac{11}{4}+1.9x_2} & \sqrt{\frac 9 4  + 1.9x_2-3/(44+30.4x_2)}\\
\end{array}
\right),
$$
so that $2a(x)=\sigma\sigma^\ast(x)$ with $\sigma(x)=\sigma_+(x)\indi{x\in D_+} + \sigma_-(x)\indi{x\in \bar{D}_-}$.
Besides we have 
$$
\partial a(x)=\left(
\begin{array}{c}
0\\
0.25\\
\end{array}
\right)\indi{x\in D_+}
+\left(
\begin{array}{c}
0\\
0.95\\
\end{array}
\right)\indi{x\in \bar{D}_-}.
$$
Note that when the scheme crosses the interface $\Gamma$, we compute the quantities 
$\pi_\Gamma^{\gamma_\pm}(\hat{X}_{t_{k+1}})$ and $F^{\gamma_\pm}(\hat{X}_{t_{k+1}})$ in the following way (we will detail the procedure for
$\pi_\Gamma^{\gamma_+}(\hat{X}_{t_{k+1}})$ and $F^{\gamma_+}(\hat{X}_{t_{k+1}})$). Recall that we have
$$
\hat{X}_{t_{k+1}}-\pi_\Gamma^{\gamma_+}(\hat{X}_{t_{k+1}})=F^{\gamma_+}(\hat{X}_{t_{k+1}})\gamma_+(\pi_\Gamma^{\gamma_+}(\hat{X}_{t_{k+1}})).$$
But here $\nu=(0, 1)^*$ so that for any $x\in\Gamma$
$$
\gamma_+(x)=\frac 1 2
\left(
\begin{array}{c}
4\\
5+0.5x_2\\
\end{array}
\right)
$$
and $\big(\pi_\Gamma^{\gamma_+}(\hat{X}_{t_{k+1}})\big)_2=0$ so that 
$\big(\hat{X}_{t_{k+1}}-\pi_\Gamma^{\gamma_+}(\hat{X}_{t_{k+1}})\big)_2
=\big(\hat{X}_{t_{k+1}}\big)_2$. 
This yields
$$F^{\gamma_+}(\hat{X}_{t_{k+1}})=\frac{\big(\hat{X}_{t_{k+1}}\big)_2}{2.5},$$ 
and then
$$
\pi_\Gamma^{\gamma_+}(\hat{X}_{t_{k+1}})=
\left(
\begin{array}{c}
\big(\hat{X}_{t_{k+1}}\big)_1-F^{\gamma_+}(\hat{X}_{t_{k+1}})\times 2\\
0\\
\end{array}
\right).
$$
Then we have everything in hand to perform our Tranformed Euler Scheme $\overline{X}$.
\vspace{0.3cm}

\noindent
{\bf Comparing with an Euler scheme applied on regularized coefficients.}
 A natural method with which to compare our tranformed scheme is to  regularize first the coefficients and then to perform a standard (i.e. not transformed) Euler scheme.  More precisely consider the operator
 $$
 C^2(\R^d;\R)\ni f\mapsto \cL^\varepsilon f=\nabla \cdot\big(a^\varepsilon \nabla_x f\big)=
 {\rm Tr}\croc{\mathbf{H}[f]a^\varepsilon}+(\partial a^\varepsilon)^* \,\nabla_x f$$
where $a^\varepsilon$ is some smoothed version of $a$ ($\varepsilon$ is the regularization step, see the following discussion about its choice). Then~$\cL^\varepsilon$ is the generator of the solution of the SDE
\begin{equation}
\label{eq:EDS-regu}
dX^\varepsilon_t=\sigma^{\varepsilon}(X^\varepsilon_t)\,dW_t+[\partial a^\varepsilon](X^\varepsilon_t)\,dt,
\end{equation}
where $\sigma^\varepsilon(\sigma^\varepsilon)^*=2a^\varepsilon$. The process $X^\varepsilon$ may be approached by a standard (i.e. not transformed) Euler scheme $\overline{X}^\varepsilon$, with time step $h_n$. 

Let $h_n$ be fixed. In fact $\varepsilon$ will be chosen in function of $h_n$. We are first inspired by the random walk approach proposed in~\cite{stroock1997}. In this later paper  Equation $(3.11)$ indicated that $\varepsilon$ has to be proportional to the square root of the space discretisation step.
Then, using a scaling argument we 
choose~$\varepsilon~=~h_n^{1/4}$. 

Then we set
$$
a^\varepsilon(x)=a(x)\1_{|x_2|>\varepsilon}+A^\varepsilon(x)\1_{|x_2|\leq \varepsilon}$$
where
$$
A^\varepsilon(x)=\frac 1 2
\left(\begin{array}{cc}
\frac{31}{8}-0.7\varepsilon+x_2(\frac{9}{8\varepsilon}+1.2) & \frac{\sqrt{3}}{8}+2+x_2(\frac 2 \varepsilon -\frac{\sqrt{3}}{8\varepsilon})\\
\frac{\sqrt{3}}{8}+2+x_2(\frac 2 \varepsilon -\frac{\sqrt{3}}{8\varepsilon})& \frac {29}{ 8} - 0.7\varepsilon+x_2(\frac{11}{8\varepsilon}+1.2)\\
\end{array}
\right).
$$
Note that the thus defined coefficient $a^\varepsilon$ is continuous and piecewise differentiable.
Then we have
$\partial a^\varepsilon= \partial a(x)\1_{|x_2|>\varepsilon}+\partial A^\varepsilon(x)\1_{|x_2|\leq \varepsilon}$
where
$$
\partial A^\varepsilon (x)=\left(
\begin{array}{c}
\frac 1 \varepsilon -\frac{\sqrt{3}}{16\varepsilon}\\
\frac{11}{16\varepsilon}+0.6\\
\end{array}
\right),
$$
and
$2a^\varepsilon(x)=\sigma^\varepsilon[\sigma^\varepsilon]^*(x)$ with 
$\sigma^\varepsilon(x)=\sigma(x)\1_{|x_2|>\varepsilon}+\Sigma^\varepsilon(x)\1_{|x_2|\leq \varepsilon}$ and $\Sigma^\varepsilon(x)$ being equal to
\small
$$
\left(\begin{array}{cc}
\sqrt{\frac{31}{8}-0.7\varepsilon+x_2(\frac{9}{8\varepsilon}+1.2)} & 0\\
\frac{\frac{\sqrt{3}}{8}+2+x_2(\frac 2 \varepsilon -\frac{\sqrt{3}}{8\varepsilon})}{\sqrt{\frac{31}{8}-0.7\varepsilon+x_2(\frac{9}{8\varepsilon}+1.2)}}
&\sqrt{ \frac {29}{ 8} - 0.7\varepsilon+x_2(\frac{11}{8\varepsilon}+1.2)
-\frac{\big( \frac{\sqrt{3}}{8}+2+x_2(\frac 2 \varepsilon -\frac{\sqrt{3}}{8\varepsilon}) \big)^2}{
\frac{31}{8}-0.7\varepsilon+x_2(\frac{9}{8\varepsilon}+1.2)}
}\\
\end{array}
\right).
$$
\normalsize
With these coefficients it is easy to perform a standard Euler Scheme on the SDE \eqref{eq:EDS-regu}.

\vspace{0.4cm}

We will compare both methods on the two following examples. Benchmarks will be provided by a deterministic approximation of the solutions of the PDE of interest.

\vspace{0.4cm}

\noindent
{\bf Example 1.}
We wish here to treat  the elliptic transmission problem
$$
(\mathcal{E}^0_{\mathrm{T},\text{bounded} \;D})
\left\{
\begin{array}{rcll}
\cL u(x)&=&0&\forall x\in D\\
\\
\langle a_+\nabla_x u_{+}(y)- a_-\nabla u_{-}(y), \nu(y)\rangle &=& 0&\forall y\in \Gamma \\
\\
u(y+)&=&u(y-)&\forall y\in\Gamma\\
\\
u(x)&=&f(x)&\forall x\in\partial D.\\
\end{array}
\right.
$$
We take the function $f$ to be
$$
f(x)=\sin(3x_1)+\cos(4x_2).$$
Consider then on one side our study of the convergence in the parabolic case, and on the other side the Feynman-Kac representation for elliptic PDEs available in the smooth case (see for instance Theorem~5.7.2 in \cite{Karatzas-Shreve-1991}).
One can hope that 
$$
\mathbb{E}^x[f(\overline{X}_{\overline{\tau}})]\xrightarrow[h_n\to 0]{} u(x),$$
where  $\overline{X}$ denotes our scheme and $\overline{\tau}=\inf\{t\geq 0:\,\overline{X}_t\notin D\}$.

We thus compute a Monte Carlo approximation of $\mathbb{E}^x[f(\overline{X}_{\overline{\tau}})]$ on one side (with $N=10^6$ paths). Note that in this Monte Carlo procedure we have used a boundary shifting method, on order to reduce the bias introduced by the approximation of the exit time $\tau=\inf\{ t\geq 0:\,X_t\notin D\}$ by $\overline{\tau}$ (see \cite{gobet-livre-num} Subsection 5.4.3, and the references therein). 

On the other side
$\mathbb{E}^x[f(\overline{X}^\varepsilon_{\overline{\tau}^\varepsilon})]$, with  
$\overline{\tau}^\varepsilon=\inf\{t\geq 0:\,\overline{X}^\varepsilon_t\notin D\}$,
provides another approximation of~$u(x)$ (note that we use again a boundary shifting method).

 Benchmarks are provided by the software FREEFEM with which we compute an approximation of~$u(x)$ by a finite element method, using around $1.5\times 10^6$ triangles and $7\times10^5$ vertices (finite elements basis consists of polynomial functions of order $1$).

\vspace{0.2cm}

Table \ref{tab1} shows the results. It seems that our Transformed Euler scheme converges quicker to the benchmark than the standard Euler scheme applied on regularized coefficients.

\small
\begin{center}
\begin{table}
 \begin{center}
\begin{tabular}{cccc}
\hline
 Point $x$& Finite Element &  Euler Scheme on  &  Transformed  \\
 & by FREEFEM &regularized coefficients  &Euler Scheme\\
&($7.10^5$ vertices)& ($h_n=10^{-n}$, $n=4,5,6$)&($h_n=10^{-n}$, $n=2,4,5,6$)  \\
\\
\hline
$x=(0,0.5)^\ast$& -0.1207& -& -0.136356\\
&&-0.115913& -0.121001\\
&&-0.117946&-0.121299\\
&&-0.118792&-0.120821\\
\hline
$x=(0.9,0.05)^\ast$& 0.92527 & -& 0.824901\\
&& 0.915937&0.924759\\
&& 0.922813 & 0.925370\\
&& 0.923853&0.925389\\
\hline
$x=(-0.3,-0.5)^\ast$& -0.745461 & -& -0.737754\\
&& -0.738184&-0.746226\\
&& -0.739099 & -0.745676\\
&& -0.742611&-0.745829\\
\end{tabular}
\caption{Approximated values of the solution $u(x)$ of $(\mathcal{E}^0_{\mathrm{T},,\text{bounded} \;D})$ at points $x=(0,0.5)^\ast,(0.9,0.05)^\ast,(-0.3,-0.5)^*$ computed with a finite element method ($7.10^5$ vertices), a standard  Euler scheme applied on a regularisation $a^\varepsilon$ of $a$, and our tranformed Euler scheme (with $N=10^6$ Monte Carlo sample, and different values of $h_n$). }
\label{tab1} 
\end{center}
\end{table}
\end{center}

\normalsize

\noindent
{\bf Example 2.}
We now turn to some parabolic example (with the same matrix-valued coefficient~$a$). We consider the following problem $(\mathcal{P}_{\mathrm{T},\text{bounded} \;D})$~:~
$$
\left\{
\begin{array}{rcll}
\partial_tu(t,x)-\mathcal{L}u(t,x)&=&0&\forall (t,x)\in (0,T]\times D \\
\\
\langle a_+\nabla_x u_{+}(t,y)- a_-\nabla_x u_{-}(t,y), \nu(y)\rangle &=& 0&\forall (t,y)\in(0,T]\times\Gamma\quad (\star) \\
\\
u(t,y+)&=&u(t,y-)&\forall (t,y)\in[0,T]\times\Gamma\\
\\
u(t,x)&=&0&\forall (t,x)\in (0,T]\times\partial D\\
\\
u(0,x)&=&u_0(x)&\forall x\in \R^d.\\
\end{array}
\right.
$$
Here we will take $T=0.1$ and
$$
u_0(x)=10*(1-|x|^2).$$
Note that the parabolic problem $(\mathcal{P}_{\mathrm{T},\text{bounded} \;D})$ is posed in a bounded domain, unlike in our theoretical study. But we have found that convenient for numerical purposes.

Note also that $u_0$ belongs to $H^1_0(D)$ and is therefore compatible with the uniform Dirichlet boundary condition in
$(\mathcal{P}_{\mathrm{T},\text{bounded} \;D})$. But 
it does not belong to the domain $\cD(A)$, as it does not satisfy the transmission condition $(\star)$.

Nevertheless one can hope that
$$
\mathbb{E}^x[u_0(\overline{X}_t)\,\1_{t\leq \overline{\tau}}\,]\xrightarrow[h_n\to 0]{} u(t,x)$$
(here we use for example 4.4.5 in \cite{gobet-livre-num} and use again the notation $\overline{\tau}$ of Example 1).

Again we compute a Monte Carlo approximation of $
\mathbb{E}^x[u_0(\overline{X}_t)\,\1_{t\leq \overline{\tau}}\,]$ on one side
and of $\mathbb{E}^x[u_0(\overline{X}^\varepsilon_t)\,\1_{t\leq \overline{\tau}^\varepsilon}\,]$
on the other side
(with $N=10^6$ paths and using again the boundary shifting method).

We use  FREEFEM to compute an approximation of $u(t,x)$ by a finite element method (discretization in space) and a Crank-Nicholson scheme (discretization in time), using around $9\times 10^9$ triangles and $4.5\times10^5$ vertices, and $300$ time steps.

Table \ref{tab2} shows the results, for $t=T$. Again it seems that our transformed Euler scheme converges quicker to the benchmark, even if for some reason it is less obvious at point $x=(0,0.5)^\ast$.

\section*{Acknowledgement}

Research partially supported by Labex Bézout (for Miguel Martinez).

\begin{table}
 \begin{center}
\begin{tabular}{cccc}
\hline
 Point $x$& Finite Element /& Euler Scheme on&  Transformed  \\
& Crank-Nicholson &  regularised coefficients& Euler Scheme \\
&($4.5\times 10^5$ vertices,  &($h_n=10^{-n}$, $n=5,6,7$) &($h_n=10^{-n}$, $n=4,5,6,7$) \\
& $300$ time steps)&&\\
\\
\hline
$x=(0,0.5)^\ast$& 2.26288& -& -\\
&& 2.26766& 2.28299\\
&& 2.26332&2.27562 \\
&& 2.26233 &2.26621   \\
\hline
$x=(0.9,0.05)^\ast$& 0.2564 & -& -\\
&&-&-\\
&& 0.269654 & 0.263835\\
&& 0.263029&0.258807\\
\hline
$x=(-0.3,-0.5)^\ast$& 4.24525 & -& -\\
&&-&-\\
&& 4.23472 & 4.23862\\
&&4.23981&4.24483 \\
\hline
$x=(0,0.05)^\ast$& 4.02857 & -& -\\
&&  4.03452&4.0381\\
& &  4.02488&4.03255 \\
&& 4.02483& 4.02936 \\
\end{tabular}
\caption{Approximated values of the solution $u(T=0.1,x)$ of $(\mathcal{P}_{\mathrm{T},\text{bounded} \;D})$ at points $x=(0,0.5)^\ast,(0.9,0.05)^\ast,(-0.3,-0.5)^*,(0,0.05)^*$ computed with a finite element / Crank-Nicholson scheme method ($4.5\times 10^5$ vertices, $300$ times steps), a standard Euler scheme applied on a regularisation $a^\varepsilon$ of $a$, and our tranformed Euler scheme (with $N=10^6$ Monte Carlo sample, and different values of $h_n$). }
\label{tab2} 
\end{center}
\end{table}



  \bibliographystyle{elsarticle-harv} 
  \bibliography{BIB.bib}


\end{document}